\newcommand{\tsum}{\textstyle{\sum}}
\newcommand{\bbr}{\mathbb{R}}
\newtheorem{theorem}{Theorem}
\newtheorem{lemma}[theorem]{Lemma}
\newtheorem{corollary}[theorem]{Corollary}
\newtheorem{proposition}[theorem]{Proposition}
\newcommand{\beq}{\begin{equation}}
\newcommand{\eeq}{\end{equation}}
\newcommand{\beqa}{\begin{eqnarray}}
\newcommand{\eeqa}{\end{eqnarray}}
\newcommand{\beqas}{\begin{eqnarray*}}
\newcommand{\eeqas}{\end{eqnarray*}}
\newcommand{\bi}{\begin{itemize}}
\newcommand{\ei}{\end{itemize}}
\newcommand{\ba}{\begin{array}}
\newcommand{\ea}{\end{array}}
\def\eqnok#1{(\ref{#1})}
\def\argmin{{\rm argmin}}
\def\argmax{{\rm argmax}}
\def\w{\omega}
\newcommand{\bbe}{\mathbb{E}}
\def\vgap{\vspace*{.1in}}
\def\E{{\bf E}}
\title{Randomized First-Order Methods for Saddle Point Optimization
\thanks{
This research was partially supported by NSF
grants CMMI-1000347, CMMI-1254446, DMS-1319050, and ONR grant N00014-13-1-0036.
}}
\author{
 Cong D. Dang
    \thanks{Department of Industrial and Systems
    Engineering, University of Florida, Gainesville, FL, 32611.
    (email: {\tt congdd@ufl.edu}). }
    \and
     Guanghui Lan
    \thanks{Department of Industrial and Systems
    Engineering, University of Florida, Gainesville, FL, 32611.
    (email: {\tt glan@ise.ufl.edu}).}
}
\definecolor{RED}{rgb}{1,0,0}\definecolor{BLUE}{rgb}{0,0,1} 
\providecommand{\DIFdel}[1]{{\protect\color{red}\sout{}}}                      
\begin{document}

\maketitle
\begin{abstract}
In this paper, we present novel randomized algorithms for solving saddle point problems whose
dual feasible region is given by the direct product of many convex sets. Our algorithms can achieve
an ${\cal O}(1/N)$ and ${\cal O}(1/N^2)$ rate of convergence, respectively, for general bilinear saddle point
and smooth bilinear saddle point problems  based on a new prima-dual termination criterion, and each iteration
of these algorithms needs to solve only one randomly selected dual subproblem.
Moreover, these algorithms do not require strongly convex assumptions on the objective function and/or the incorporation of a strongly convex perturbation term. They do not necessarily require the primal or dual feasible regions to be bounded
or the estimation of the distance from the initial point to the set of optimal solutions to be available either.
We show that
when applied to linearly constrained problems, RPDs are equivalent to
certain randomized variants of the alternating direction method of multipliers (ADMM), while
a direct extension of ADMM does not necessarily converge when the number of blocks exceeds two.

\vspace{.1in}

\noindent {\bf Keywords.} Stochastic Optimization, Block Coordinate Descent, Nonsmooth Optimization, Saddle Point Optimization,
Alternating Direction Method of Multipliers

\vspace{.1in}

\end{abstract}

\setcounter{equation}{0}
\section{Introduction} \label{sec_intro}

Motivated by some recent applications
in data analysis, there has been a growing interest in the design and analysis of randomized first-order methods
for large-scale convex optimization. In these applications, the complex datasets
are so big and often distributed over different storage locations. It is often
impractical to assume that optimization algorithms can traverse an entire dataset once in each iteration,
because doing so is either time consuming or unreliable,
and often results in low resource utilization due to necessary synchronization among
different computing units (e.g., CPUs, GPUs, and Cores) in a distributed computing environment.
On the other hand,  randomized algorithms can make progress by using information obtained from a
randomly selected subset of data and thus provide much flexibility
for their implementation in the aforementioned distributed environments.

In this paper, we focus on the development of randomized algorithms
for solving a class of saddle point problems given by
\beq\label{spp}
\min_{x \in X} \left\{ h(x)+ \max_{y \in Y} \langle  A x,  y\rangle- J(y) \right\},
\eeq
where $X \subseteq \bbr^n$ and $Y \subseteq \bbr^m$ are closed convex sets, $h: X \to \bbr$ and $J: Y \to \bbr$
are closed convex functions, and $A: \bbr^n \to \bbr^m$ denotes a given linear operator.
Throughout this paper, we assume that
\beq \label{spp1}
y = (y_1; \ldots; y_p), \ \ Y = Y_1 \times \ldots \times Y_p, \ \ \mbox{and} \ \ J(y)=J_1(y_1) + \ldots + J_p(y_p).
\eeq
Here $y_i \in Y_i$, $i =1, \ldots,p$, $Y_i \subseteq \bbr^{m_i}$ are given closed convex sets such that $\sum_{i=1}^p m_i = m$,
and $J_i: Y_i \to \bbr$, $i=1, \ldots,p$, are closed convex functions. Accordingly, we denote
$A = (A_1; \ldots; A_p)$, where $A_i$ are given linear operators from $\bbr^n$ to $\bbr^{m_i}$, $i = 1, \ldots, p$.

Problem \eqnok{spp}-\eqnok{spp1} covers a few interesting subclasses of problems in the literature.
One prominent example is to minimize the summation of several separable convex functions over some coupled
linear constraints. Indeed, letting $X = \bbr^n$ and $h(x) = - b^T x$,
one can view problem~\eqnok{spp}-\eqnok{spp1} as the saddle-point reformation of
\beq \label{LCP_review}
\begin{array}{ll}
\min & J_1(y_1)+J_2(y_2)+\ldots+J_p(y_p) \\
s.t.& A_1^T y_1+A_2^T y_2+\ldots+ A_p^T y_p=b,\\
& y_i \in Y_i, i=1,\ldots,p.
\end{array}
\eeq

The above problem has found wide applications in machine learning and image processing,
and many first-order algorithms have developed for its solutions. More specifically,
one can
apply Nesterov's smoothing scheme~\cite{Nest05-1},
the primal-dual method~\cite{ChamPoc11-1,CheLanOu13-1}, and the mirror-prox method~\cite{Nem05-1,MonSva10-1,CheLanOu14-1}
to solve the saddle-point reformulation in \eqnok{spp}.
We can also apply some classic penalty-based approaches for solving \eqnok{LCP_review}.
In particular, Lan and Monteiro
discussed the complexity of first-order quadratic penalty methods~\cite{LanMon13-1} and
augmented Lagrangian penalty methods~\cite{LanMon09-1} applied to problem~\eqnok{LCP_review}.
More recently, He, Juditsky and Nemirovski generalized the mirror-prox algorithm for solving problem~\eqnok{LCP_review}
based on the exact penalty method~\cite{HeJudNem14-1}. When $p = 2$,
a special augmented Lagrangian method, namely the alternating direction method of
multipliers (ADMM)~\cite{DouglasRachford56-1,Rocka76-1,LiMe79-1,Gabay83-1,EcBe92-1}, has been intensively studied
recently~\cite{boyd2011distributed,GoMa12-1,GoMaSc13-1,HeYu12-1,OuCheLanPas14-1}.
However, as shown by Chen et al. \cite{ChenHeYeYuan13-1}, a direction extension of ADMM
does not necessarily converge when $p > 2$, unless some strong convexity assumptions on
$J_i$ and full row rank assumption on $A_i$ are made (e.g., \cite{HeYu12-1,HongLuo13-1,LinMaZhang14-1}).
Observe that all these methods need to perform $p$ projection subproblems over the sets $Y_i$, $i = 1, \ldots, p$,
in every iteration.

Another interesting example is to minimize the regularized loss function given by
\beq \label{reg_loss}
\min_{x \in X} h(x) + \sum_{i=1}^p f_i(A_i x),
\eeq
where $f_i: \bbr^{m_i} \to \bbr$ are closed convex functions
with conjugate $f_i^*$, $i = 1, \ldots, p$.
Clearly, problem \eqnok{reg_loss} can be viewed as a special case of problem \eqnok{spp}
with $J_i = f^*_i$ and $Y_i = \bbr^{m_i}$, $i=1, \ldots, p$.
While the algorithms for solving problem \eqnok{LCP_review} are mostly deterministic,
much effort has been devoted to randomized first-order methods
for solving problem \eqnok{reg_loss}, which can make progress by utilizing
the (sub)gradient of a randomly selected component $f_i(A_i x)$ only.
More specifically, if $f_i$ are general nonsmooth convex functions,
one can apply the mirror-descent stochastic approximation in
\cite{NJLS09-1} or the accelerated stochastic approximation in \cite{Lan10-3},
which exhibit an ${\cal O}(1/\sqrt{N})$ rate of convergence for solving problem \eqnok{reg_loss}. Here
$N$ denotes the number of iterations.
Recently, some interesting development has been made \cite{SchRouBac13-1,BlHeGa07-1, MahdaviJ13,Suzuki13}
under the assumption that $f_i$ are smooth convex functions.
Based on incremental averaging gradient method~\cite{BlHeGa07-1},
Schmidt et. al.~\cite{SchRouBac13-1} developed a stochastic averaging gradient method
and show that it exhibits an ${\cal O}(1/N)$ rate of convergence for smooth problems and
an linear rate of convergence for the case when $f_i$ are smooth and strongly convex.
This algorithm is also closely related to the stochastic dual coordinate ascent \cite{ShalevZhang13-1}, a randomized
version of dual coordinate ascent applied to the dual of problem \eqnok{reg_loss}
when $h$ is strongly convex, see~\cite{Nest10-1,LevLew10-1,Nest12-1,Rich12-1,BeckTet13-1, LuXiao13-1, DangLan13-1}
for some recent developments on block coordinate descent methods.

In this paper, we propose a novel algorithm, namely the randomized primal-dual method, to solve problems
in the form of \eqnok{spp}-\eqnok{spp1}. The main idea is to incorporate a block decomposition of dual space into the
primal-dual algorithm in \cite{ChamPoc11-1}. At each iteration, our algorithm requires to solve only one subproblem
in dual space instead of $p$ subproblems as in the primal-dual algorithm. By using a
new primal-dual termination
criterion inspired by the one employed by Monteiro and Svaiter \cite{MonSva10-3}, we show that our algorithm can achieve
an ${\cal O}(1/N)$ and ${\cal O}(1/N^2)$ rate of convergence, respectively
for solving general bilinear saddle point problems (without any strongly convex assumptions) and smooth bilinear saddle point problems
(with $J$ being strongly convex), where $N$ is the number of iterations.
Furthermore, we demonstrate that our algorithm can deal with the situation
when either $X$ or $Y$ is unbounded, as long as a saddle point of problem \eqnok{spp}-\eqnok{spp1} exists.
It should be noted that these complexity results will have an extra constant factor which depends on
the number of blocks $p$, but such a dependence is mild if $p$ is not too big.
In addition, we discuss possible
extensions of the RPD method to the non-Euclidean geometry and also show that
RPD applied to the linearly constrained problems in \eqnok{LCP_review} is
equivalent to a certain randomized variant of the ADMM method. To the best of our knowledge,
all these developments seem to be new in the literature.
In fact, our proof for the convergence of the ergodic mean of
the primal-dual method for smooth bilinear saddle point problems was also new even under the deterministic setting (i.e., $p = 1$),
\footnote{It is worth noting that Chambolle and Pock~\cite{ChamPoc14-1} had also released their results on the convergence
of the ergodic means for deterministic primal-dual methods shortly after we released the initial version of the current paper in Sep., 2014.}.

It should be noted that in a concurrent and independent work,
Zhang and Xiao~\cite{Yuchen14} presented a randomized version of the primal-dual method
for solving a special class of regularized
empirical risk minimization (ERM) problems given in the form of \eqnok{reg_loss}
\footnote{Note that \cite{Yuchen14}
was also initially released in Sep., 2014.}.
However, the algorithms, analysis and termination criteria in these papers are significantly different:
(a) our primal-dual algorithm does not involve any extrapolation step
as used in \cite{Yuchen14};  (b) we employed a new primal-dual optimality gap to assess the quality of
a feasible solution to problem~\eqnok{spp}, while \cite{Yuchen14} employs the distance
to the optimal solution as the termination criterion; and (c) as a consequence, the convergence analyses
in these papers are significantly different. In fact, the basic algorithm in \cite{Yuchen14}
was designed for problems where $h$ is strongly convex problems (similarly to those randomized dual coordinate
descent methods~\cite{ShaZhang15-1}). Otherwise, one has to add
a strongly convex perturbation to the objective function and impose stronger assumptions about
$f_i$ and $h$. Such a perturbation term
can be properly chosen only if there exists a bound on the distance from the initial
point to the set of optimal solutions, and hence are not best suitable for the linearly constrained
problems in \eqnok{LCP_review}. In fact, the authors were not aware of the existence of
any other randomized algorithms in the literature that do not require the incorporation of
a perturbation term for solving \eqnok{spp}-\eqnok{spp1}, but can achieve
the optimal rate of convergence in terms of their dependence on $N$ as shown in this paper.

This paper is organized as follows. We first discuss some new primal-dual termination criteria in
Section 2. We then present a general RPD method in Section 3, and
discuss its convergence properties for general bilinear saddle point and smooth bilinear saddle
point problems under the assumption that the primal and dual feasible regions are bounded.
In Section 3, we generalize the RPD method for the case when the feasible regions are unbounded
and incorporate non-Euclidean distance generating functions into the RPD method.
In Section 4, we discuss the relation of the RPD method to ADMM. Finally some brief concluding
remarks are provided in Section 5.

\section{The problem of interest and its termination criteria}
We introduce in this section a few termination criteria that will be used to evaluate the solution quality
for problem \eqnok{spp}.

Denote $Z \equiv X \times Y$. For a given $\hat z=(\hat x, \hat y) \in Z$,
let us define the gap function $Q_0$ by
\beq \label{gap_complete}
Q_0(\hat z, z):=\left[h(\hat x)+\left\langle {A\hat x , y}\right\rangle-J(y)\right]-\left[h(x)+\left\langle {Ax, \hat y}\right\rangle-J(\hat y) \right], \ \ \
\forall  z=(x,y) \in Z.
\eeq
It can be easily verified that $\hat z \in Z$ is an optimal solution of problem \eqnok{spp}-\eqnok{spp1}
if and only if $Q_0(\hat z, z) \le 0$ for any $z \in Z$. A natural way to assess the solution
quality of $\hat z$ is to compute the gap
\beq \label{def_g0}
g_0(\hat z) = \max_{z \in X} Q_0(\hat z, z),
\eeq
under the assumption that $g_0$ is well-defined, e.g., when $Z$ is bounded \cite{ChamPoc11-1,CheLanOu13-1}.
Since $\hat z$ is a random variable in the randomized primal-dual algorithm to be studied in this paper,
one would expect to use $\E[g_0(\hat z)]$ to measure the quality of $\hat z$. However, except for a few specific cases,
we cannot provide an error bound on $\E[g_0(\hat z)]$ in general. Instead, we will introduce a
slightly relaxed termination criterion defined as follows. For any given $\delta \in \bbr$, let us denote
\beq \label{gap_complete1}
Q_\delta(\hat z, z):=\left[h(\hat x)+\left\langle {A\hat x , y}\right\rangle-J(y)\right]-\left[h(x)+\left\langle {Ax, \hat y}\right\rangle-J(\hat y) \right] + \delta, \ \ \
\forall  z=(x,y) \in Z
\eeq
and
\beq \label{def_gdelta}
g_\delta(\hat z) := \max_{z \in X} Q_\delta(\hat z, z).
\eeq
We will show the convergence of
the randomized primal-dual algorithm in terms of the expected primal-dual gap $\E [g_\delta(\hat z)]$
for some $\delta \in \bbr$ satisfying $\E[\delta] = 0$. Clearly, $g_0$ in \eqnok{def_g0}
is a specialized version of $g_\delta$ with $\delta = 0$.

One potential problem associated with the aforementioned primal-dual gap
$g_\delta$ is that it is not well-defined if $Z$ is unbounded.
In the latter case, Monteiro and Svaiter~\cite{MonSva09-1}
suggested a perturbation-based termination criterion for solving problem \eqnok{spp}-\eqnok{spp1}
inspired by the enlargement of a maximal monotone operator that was first
studied in \cite{burachik1997enlargement}. One advantage of using this criterion is that its definition does not
depend on the boundedness of the domain of the operator. More specifically,
as shown in \cite{MonSva09-1}, there always exists a perturbation vector $v$ such that
\[
\tilde g_0(\hat z, v) := \max_{z \in Z} Q_0(\hat z, z) - \langle v, \hat z - z \rangle
\]
is well-defined, although the value of $g_0(\hat z)$ in \eqnok{def_g0} may be unbounded if $Z$ is unbounded.
Accordingly, for the case when $\hat z$ is a random variable, we define
\beq \label{def_tgdelta}
\tilde g_\delta(\hat z, v) := \max_{z \in Z} Q_\delta(\hat z, z) - \langle v, \hat z - z \rangle
\eeq
and establish the convergence of the randomized primal-dual algorithm in terms of $\E[\tilde g_\delta(\hat z, v)]$
for some $\delta \in \bbr$ satisfying $\E[\delta] = 0$.

\section{The algorithm and main results}
This section consists of three subsections. We first present a generic
randomized primal-dual (RPD) method in subsection 3.1, and discuss its convergence
properties for solving different classes of saddle point problems in the two subsequent
subsections. More specifically, we focus on the analysis of the RPD method
for solving general saddle point problems, where both $h$ and $J$ are general convex functions
without assuming strong convexity, over bounded feasible sets in subsection 3.2.
We then show in subsection 3.3 that much stronger convergence
properties can be obtained for solving smooth saddle point problems, for which $J$ is strongly convex.
It is worth noting that the same algorithmic framework presented in subsection 3.1
is applicable to all these different cases mentioned above, as well as the unbounded
case to be discussed in Section 4.

\subsection{The RPD algorithm}
We will first introduce a few notations in order to simplify the
description and analysis of the RPD algorithm.
Let $I_m$ and $I_{m_i}, i=1,2,\ldots,p$, respectively, denote the identity matrices
in $\bbr^{m\times m}$ and $\bbr^{m_i\times m_i}, i=1,2,\ldots,p.$ Observe that
$I_{m_i}, i=1,2,\ldots,p,$ can be viewed as the $i$-th diagonal block of $I_m$.
Also let us define $U_i \in \bbr^{m\times m}$, $i=1,2,\ldots,p$, as the diagonal
matrix whose $i$-th diagonal block is $I_{m_i}$ and all other blocks are given by $0$.
Also let $\bar U_i \in \bbr^{m\times m}$ be the complement of $U_i$ such that
\[
U_i + \bar U_i = I_m.
\]

With the help of the above notations, we are now ready to describe our algorithmic framework as follows.
\begin{algorithm} [H]
    \caption{The randomized primal-dual (RPD) method for saddle point optimization}
    \label{algRPD}
    \begin{algorithmic}
\STATE Let $z^1=(x^1,y^1) \in X\times Y$,
and nonnegative stepsizes $\{\tau_t\},$ $\{\eta_t\},$ parameters $\{q_t\}$, and weights $\{\gamma_t\}$
be given. Set $\bar x^1 = x^1$.

\FOR {$t=1, \ldots,N$}

\STATE 1. Generate a random variable $i_t$ uniformly distributed over $\{1,2,...,p \}.$

\STATE 2. Update $y^{t+1}$ and $x^{t+1}$ by
\begin{align}
y_i^{t+1} &=
\begin{cases}
\argmin_{y_{i} \in Y_{i}}{\left\langle {-U_{i}A\bar x^t,y}\right\rangle+J_{i} (y_{i})+\tfrac{\tau_t}{2} \| y_{i}-y_{i}^t\|_2^2}, & i = i_t,\\
y_i^{t}, & i \ne i_t.
\end{cases}  \label{eqn_update_y}\\
x^{t+1}&=\argmin_{x \in X} {h(x)+\left\langle {x,A^Ty^{t+1}}\right\rangle}+\tfrac{\eta_t}{2} \| x-x^t\|_2^2. \label{eqn_update_x}\\
\bar x^{t+1}&=q_t (x^{t+1}-x^t)+ x^{t+1}. \label{eqn_x_bar}
\end{align}
\ENDFOR

{\bf Output:}  Set \beq \label{defxbar}
\hat z^N=\left(\tsum_{t=1}^{N-1}{\gamma_t}\right)^{-1}\tsum_{t=1}^{N-1}{\gamma_t} z^{t+1}.
\eeq

    \end{algorithmic}
\end{algorithm}

The above RPD algorithm originated from the primal-dual method in \cite{ChamPoc11-1}. The major differences between these two algorithms
are summarized as follows. Firstly, instead of updating the whole dual variable $y_i^t$, $i = 1, \ldots, p$, as in the original primal-dual algorithm,
the RPD algorithm updates in Step \eqnok{eqn_update_y} the $i_t$-th component of $y^t$ only. Secondly, rather than using constant stepsizes for $\tau_t$, $\eta_t$, and $q_t$,
variable stepsizes are used in the RPD method. Thirdly, the output solution $\hat z^N$ is defined as a weighted average rather than a
simple average of $z^t$, $t=2, \ldots, N+1$. The latter two enhancements are introduced so that the primal-dual algorithm can achieve the optimal
rate of convergence for solving smooth saddle point problems, which is new even for the deterministic case where the number of blocks $p = 1$.

It is also known that the primal-dual algorithm is related to
the Douglas-Rachford splitting method \cite{DouglasRachford56-1,LiMe79-1} and a pre-conditioned version of
the alternating direction method of multipliers \cite{gabay1976dual,glowinski1975approximation}
(see, e.g., \cite{boyd2011distributed,ChamPoc11-1,esser2010general,he2012on,monteiro2013iteration} for detailed reviews on the relationship between
the primal-dual methods and other algorithms, as well as recent theoretical developments). However, to the best of our knowledge,
there does not exist randomized version of these algorithms which only need to solve one dual subproblem at each iteration
before in the literature (see Section 4 for more discussions).

It should be noted that Algorithm~\ref{algRPD} is conceptual only since we have not yet specified a few algorithmic parameters
including  $\{\tau_t\}$, $\{\eta_t\}$, $\{q_t\}$, and $\{\gamma_t\}$.
We will come back to this issue after establishing some convergence properties of the generic RPD method
for solving different classes of saddle-point problems.

\subsection{General bilinear saddle point problems over bounded feasible sets}

Throughout this subsection we assume that both $h$ and $J$ are general convex function (without assuming strong convexity)
so that problems \eqnok{eqn_update_y} and \eqnok{eqn_update_x} are relatively easy to solve.
Also we assume that both $X$ and $Y$ are bounded, i.e., $\exists$ $\Omega_X > 0$ and $\Omega_Y >0$ such that
\beq\label{omega}
\max_{x_1,x_2 \in X} \|x_1-x_2 \|_2^2 \le \Omega_X^2 \; \mbox{and} \; \max_{y_1,y_2 \in Y} \|y_1-y_2 \|_2^2 \le \Omega_Y^2.
\eeq

Before establishing the main convergence properties for the RPD method applied to general bilinear saddle point problems,
we show an important recursion of this algorithm in the following result.

\begin{proposition} \label{proposition1}
Let $z^t=(x^t,y^t), t=1,2,\ldots,N,$ be generated by Algorithm~\ref{algRPD}. For any $z \in Z,$ we have
\beq \label{recursion1}
\begin{array}{ll}
&\gamma_t Q_0(z^{t+1},z)+\left\langle {\gamma_tAx^{t+1}-Ax^t, y^{t+1}-y}\right\rangle+ (\gamma_t-1)\left[ J(y)-J(y^{t+1})\right]-\Delta _t \\
&\le \tfrac{\gamma_t\eta_t}{2}\left[\| x-x^t\|_2^2- \| x^t-x^{t+1}\|_2^2-\| x-x^{t+1}\|_2^2\right]\\
&\;\;\;\;+\tfrac{\tau_t}{2}\left[\| y-y^t\|_2^2-\| y-y^{t+1}\|_2^2-\|y^t-y^{t+1} \|_2^2\right],
\end{array}
\eeq
where
\beq \label{def_triangle_t}
\begin{array}{ll}
\Delta _t:=\left\langle {q_{t-1}U_{i_t} A(x^t-x^{t-1}), y^{t+1}-y}\right\rangle-\left\langle {\bar U_{i_t}Ax^t, y^{t}-y}\right\rangle+\tsum_{i \ne i_t}[J (y_{i}^{t})-J_i(y_i)].
\end{array}
\eeq
\end{proposition}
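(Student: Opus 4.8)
The plan is to derive \eqnok{recursion1} by combining the first-order (prox) optimality conditions of the two subproblems \eqnok{eqn_update_y} and \eqnok{eqn_update_x} solved at iteration $t$, and then carrying out a careful bookkeeping of the resulting bilinear terms so that the error quantity $\Delta_t$ of \eqnok{def_triangle_t} is exactly what is left over. First I would record the two \emph{three-point} inequalities. Since $x^{t+1}$ minimizes the $\eta_t$-strongly convex function $h(x)+\langle x, A^Ty^{t+1}\rangle+\tfrac{\eta_t}{2}\|x-x^t\|_2^2$ over $X$, for every $x\in X$,
\[
h(x^{t+1})-h(x)+\langle A(x^{t+1}-x), y^{t+1}\rangle \le \tfrac{\eta_t}{2}\big[\|x-x^t\|_2^2-\|x^t-x^{t+1}\|_2^2-\|x-x^{t+1}\|_2^2\big],
\]
and since $y_{i_t}^{t+1}$ minimizes the $\tau_t$-strongly convex function $\langle -A_{i_t}\bar x^t, y_{i_t}\rangle+J_{i_t}(y_{i_t})+\tfrac{\tau_t}{2}\|y_{i_t}-y_{i_t}^t\|_2^2$ over $Y_{i_t}$, for every $y_{i_t}\in Y_{i_t}$,
\[
J_{i_t}(y_{i_t}^{t+1})-J_{i_t}(y_{i_t})-\langle A_{i_t}\bar x^t, y_{i_t}^{t+1}-y_{i_t}\rangle \le \tfrac{\tau_t}{2}\big[\|y_{i_t}-y_{i_t}^t\|_2^2-\|y_{i_t}^{t+1}-y_{i_t}^t\|_2^2-\|y_{i_t}-y_{i_t}^{t+1}\|_2^2\big].
\]
I would then multiply the first inequality by $\gamma_t$, which immediately reproduces the $\gamma_t\eta_t$-term on the right-hand side of \eqnok{recursion1}, and keep the second inequality with unit weight.

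Next I would lift the block-$i_t$ quantities to full-vector form. Because \eqnok{eqn_update_y} leaves $y_i^{t+1}=y_i^t$ for $i\neq i_t$, the off-block contributions to $\|y-y^t\|_2^2-\|y-y^{t+1}\|_2^2$ cancel and $\|y^t-y^{t+1}\|_2^2$ is carried entirely by block $i_t$; hence the right-hand side of the $y$-inequality is exactly the $\tau_t$-term in \eqnok{recursion1}. The block inner product equals $\langle A_{i_t}\bar x^t, y_{i_t}^{t+1}-y_{i_t}\rangle=\langle U_{i_t}A\bar x^t, y^{t+1}-y\rangle$, and substituting $\bar x^t=q_{t-1}(x^t-x^{t-1})+x^t$ from \eqnok{eqn_x_bar} splits it into the extrapolation term $q_{t-1}\langle U_{i_t}A(x^t-x^{t-1}), y^{t+1}-y\rangle$ appearing in $\Delta_t$ and a remaining term $\langle U_{i_t}Ax^t, y^{t+1}-y\rangle$.

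I would then expand $Q_0(z^{t+1},z)$ via \eqnok{gap_complete} and reconcile the weights. The $J$-part of $\gamma_tQ_0$ combined with $(\gamma_t-1)[J(y)-J(y^{t+1})]$ collapses to the unit-weight quantity $J(y^{t+1})-J(y)$, matching the weight of the $y$-inequality; splitting this into its block-$i_t$ piece $J_{i_t}(y_{i_t}^{t+1})-J_{i_t}(y_{i_t})$ plus the remainder $\sum_{i\neq i_t}[J_i(y_i^t)-J_i(y_i)]$ (again using $y_i^{t+1}=y_i^t$) produces precisely the last summand of $\Delta_t$. After this, the $h$-terms and the $J$-terms on both sides are matched by construction, so the proposition reduces to checking that the bilinear $\langle A\,\cdot\,,\,\cdot\,\rangle$ cross-terms agree.

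The main obstacle is exactly this bilinear bookkeeping. Adding the two weighted inequalities and subtracting the left-hand side of \eqnok{recursion1}, I would expand $\langle \gamma_tAx^{t+1}-Ax^t, y^{t+1}-y\rangle$ together with the bilinear part of $\gamma_tQ_0$; most terms telescope, and using $\bar U_{i_t}=I_m-U_{i_t}$ to rewrite $-\langle Ax^t,\cdot\rangle+\langle U_{i_t}Ax^t,\cdot\rangle=-\langle \bar U_{i_t}Ax^t,\cdot\rangle$, the remaining discrepancy pairs against the $-\langle \bar U_{i_t}Ax^t, y^t-y\rangle$ term of $\Delta_t$ and collapses to $\langle \bar U_{i_t}Ax^t,\, y^t-y^{t+1}\rangle$. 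The key observation, which is the delicate step tied to the single-block update, is that this residual \emph{vanishes}: $\bar U_{i_t}Ax^t$ is supported on the blocks $i\neq i_t$ while $y^t-y^{t+1}$ is supported only on block $i_t$, so the two vectors are orthogonal. Once this residual is seen to be zero, the sum of the two prox-inequalities is exactly the left-hand side of \eqnok{recursion1}, and the bound on the right-hand side follows, completing the argument.
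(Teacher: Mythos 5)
Your proposal is correct and follows essentially the same route as the paper's proof: the same two prox-optimality (three-point) inequalities weighted by $\gamma_t$ and $1$, the same expansion of $\bar x^t$ and lifting of block-$i_t$ quantities to full vectors, and the same key orthogonality fact $\left\langle \bar U_{i_t}Ax^t,\, y^t-y^{t+1}\right\rangle = 0$, which the paper states in the equivalent form $\left\langle \bar U_{i_t}Ax^{t}, y^{t+1}-y\right\rangle=\left\langle \bar U_{i_t}Ax^t, y^{t}-y\right\rangle$. The bilinear and $J$-term bookkeeping you describe matches the paper's derivation exactly, with $\Delta_t$ emerging as the same leftover quantity.
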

\begin{proof}
By the optimality condition of problem~\eqnok{eqn_update_x}, for all $x \in X,$ we have
\beq \label{observation2}
h(x^{t+1})-h(x)+\left\langle {x^{t+1}-x, A^Ty^{t+1}}\right\rangle
+\tfrac{\eta_t}{2} \| x^t-x^{t+1}\|_2^2+\tfrac{\eta_t}{2} \| x-x^{t+1}\|_2^2 \le \tfrac{\eta_t}{2} \| x-x^t\|_2^2.
\eeq
Observe that
$$
\begin{array}{ll}
\left\langle {x^{t+1}-x, A^Ty^{t+1}}\right\rangle&=\left\langle {Ax^{t+1}, y}\right\rangle-\left\langle {Ax, y^{t+1}}\right\rangle-\left\langle {Ax^{t+1}, y}\right\rangle+\left\langle {Ax^{t+1}, y^{t+1}}\right\rangle\\
 &= \left\langle {Ax^{t+1}, y}\right\rangle-\left\langle {Ax, y^{t+1}}\right\rangle+\left\langle {Ax^{t+1}, y^{t+1}-y}\right\rangle,
\end{array}
$$
which together with \eqnok{observation2} and the definition of $Q_0$ in \eqnok{gap_complete} then imply
\beq \label{observation3}
\begin{array}{ll}
&Q_0(z^{t+1},z)+\left\langle {Ax^{t+1}, y^{t+1}-y}\right\rangle+J(y)-J(y^{t+1})\\
\le& \tfrac{\eta_t}{2}\left[\| x-x^t\|_2^2- \| x^t-x^{t+1}\|_2^2-\| x-x^{t+1}\|_2^2\right].
\end{array}
\eeq
Now, by the optimality condition of problem~\eqnok{eqn_update_y}, for all $y \in Y,$ we have
\beq \label{observation1}
\left\langle {-U_{i_t}A{\bar x}^t, y^{t+1}-y}\right\rangle +J_{i_t}(y_{i_t}^{t+1})-J_{i_t}(y_{i_t})
+\tfrac{\tau_t}{2} \| y_{i_t}^t-y_{i_t}^{t+1}\|_2^2+\tfrac{\tau_t}{2} \| y_{i_t}-y_{i_t}^{t+1}\|_2^2 \le \tfrac{\tau_t}{2} \| y_{i_t}-y_{i_t}^t\|_2^2.
\eeq
Using the definition of $\bar x^t$ in  \eqnok{eqn_x_bar}, we also have
\beq \label{observation111}
\begin{array}{ll}
\left\langle {-U_{i_t}A{\bar x^t}, y^{t+1}-y}\right\rangle &=\left\langle {-U_{i_t}A[q_{t-1} (x^t-x^{t-1})+x^t], y^{t+1}-y}\right\rangle\\
&=\left\langle {-U_{i_t}Ax^t, y^{t+1}-y}\right\rangle-\left\langle {q_{t-1}U_{i_t} A(x^t-x^{t-1}), y^{t+1}-y}\right\rangle\\
&=\left\langle {-(U_{i_t}+\bar U_{i_t})Ax^t, y^{t+1}-y}\right\rangle-\left\langle {q_{t-1}U_{i_t} A(x^t-x^{t-1}), y^{t+1}-y}\right\rangle\\
&\quad+\left\langle {\bar U_{i_t}Ax^t, y^{t+1}-y}\right\rangle\\
&=\left\langle {-Ax^t, y^{t+1}-y}\right\rangle-\left\langle {q_{t-1}U_{i_t} A(x^t-x^{t-1}), y^{t+1}-y}\right\rangle+\left\langle {\bar U_{i_t}Ax^t, y^{t}-y}\right\rangle,
\end{array}
\eeq
where the last identity follows from the fact that $U_{i_t}+\bar U_{i_t}=I_n$ and
that $\left\langle {\bar U_{i_t}Ax^{t}, y^{t+1}-y}\right\rangle=\left\langle {\bar U_{i_t}Ax^t, y^{t}-y}\right\rangle.$
Also observe that
\beq \label{observation112}
\begin{array}{ll}
J_{i_t} (y_{i_t}^{t+1})-J_{i_t} (y_{i_t})&=J (y^{t+1})-\\sum_{i \ne i_t}J_{i} (y_{i}^{t})-[J (y)- \tsum_{i \ne i_t}J_{i} (y_{i})]\\
&=J (y^{t+1})-J(y)-\tsum_{i \ne i_t}[J (y_{i}^{t})-J_i(y_i)],\\
\| y_{i_t}^t-y_{i_t}^{t+1}\|_2^2&=\|y^t-y^{t+1} \|_2^2,\\
\| y_{i_t}-y_{i_t}^t\|_2^2-\| y_{i_t}-y_{i_t}^{t+1}\|_2^2&=\| y-y^t\|_2^2-\| y-y^{t+1}\|_2^2.
\end{array}
\eeq
Using these observations in \eqnok{observation1}, we conclude
$$\begin{array}{ll}
&\left\langle {-Ax^t, y^{t+1}-y}\right\rangle-\left\langle {q_{t-1}U_{i_t} A(x^t-x^{t-1}), y^{t+1}-y}\right\rangle+\left\langle {\bar U_{i_t}Ax^t, y^{t}-y}\right\rangle\\
&+J (y^{t+1})-J(y)-\tsum_{i \ne i_t}[J (y_{i}^{t})-J_i(y_i)] \le \frac{\tau_t}{2}\left[\| y-y^t\|_2^2-\| y-y^{t+1}\|_2^2-\|y^t-y^{t+1} \|_2^2\right].
\end{array}
$$
Multiplying both sides of \eqnok{observation3} by $\gamma_t$ and adding it up with the above inequality, we have
$$\begin{array}{ll}
&\gamma_tQ_0(z^{t+1},z)+\left\langle {\gamma_tAx^{t+1}-Ax^t, y^{t+1}-y}\right\rangle+ (\gamma_t-1)\left[ J(y)-J(y^{t+1})\right]\\
&-\left\langle {q_{t-1}U_{i_t} A(x^t-x^{t-1}), y^{t+1}-y}\right\rangle+\left\langle {\bar U_{i_t}Ax^t, y^{t}-y}\right\rangle-\tsum_{i \ne i_t}[J (y_{i}^{t})-J_i(y_i)] \\
&\le \frac{\gamma_t\eta_t}{2}\left[\| x-x^t\|_2^2- \| x^t-x^{t+1}\|_2^2-\| x-x^{t+1}\|_2^2\right]\\
&\;\;\;\;+\frac{\tau_t}{2}\left[\| y-y^t\|_2^2-\| y-y^{t+1}\|_2^2-\|y^t-y^{t+1} \|_2^2\right],
\end{array}
$$
which, in view of the definition of $\Delta _t$, clearly implies the result.
\end{proof}

\vgap

The following lemma provides an upper bound on $\E_{i_t}[\Delta_t].$
\begin{lemma} \label{lemma1}
Let $\Delta_t$ be defined in \eqnok{def_triangle_t}.
If $i_t$ is uniformly distributed on $\{1,2,...,p\},$ then
$$\begin{array}{ll}
\E_{i_t}[\Delta _t] \le& \left\langle{\left( \tfrac{1}{p}q_{t-1}-\tfrac{p-1}{p} \right) Ax^t-\tfrac{1}{p}q_{t-1}Ax^{t-1}, y^{t}-y}\right\rangle
+\tfrac{p-1}{p} \left[ J (y^{t})-J(y)\right]\\
&+\tfrac{q_{t-1}^2 \|A\|_2^2}{2p\tau_t}\| x^t-x^{t-1}\|_2^2+\tfrac{\tau_t}{2}\E_{i_t} \left[\| y^{t+1}-y^t\|_2^2\right].
\end{array}
$$
\end{lemma}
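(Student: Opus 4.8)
The plan is to take the conditional expectation $\E_{i_t}[\cdot]$ of the three pieces making up $\Delta_t$ in \eqnok{def_triangle_t} one at a time, exploiting the block structure of the selection matrices. Since $\sum_{i=1}^p U_i = I_m$ and $i_t$ is uniform on $\{1,\dots,p\}$, we have $\E_{i_t}[U_{i_t}] = \tfrac1p I_m$ and $\E_{i_t}[\bar U_{i_t}] = \tfrac{p-1}{p} I_m$. Throughout, the expectation is conditional on the history up to step $t$, so $x^t$, $x^{t-1}$ and $y^t$ are deterministic; I would abbreviate $w := q_{t-1} A(x^t - x^{t-1})$, which is likewise deterministic.

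The other two pieces are straightforward. For the coupling term $-\langle \bar U_{i_t} A x^t, y^t - y\rangle$, both $A x^t$ and $y^t - y$ are independent of $i_t$, so pulling the expectation through the block matrix gives $-\tfrac{p-1}{p}\langle A x^t, y^t - y\rangle$. For the decoupled sum $\sum_{i \ne i_t}[J_i(y_i^t) - J_i(y_i)]$, a counting argument suffices: a fixed index $i$ belongs to $\{j \ne i_t\}$ for exactly $p-1$ of the $p$ equally likely choices of $i_t$, so its expectation equals $\tfrac{p-1}{p}\sum_{i=1}^p[J_i(y_i^t)-J_i(y_i)] = \tfrac{p-1}{p}[J(y^t) - J(y)]$. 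These already produce the $-\tfrac{p-1}{p} A x^t$ contribution and the $\tfrac{p-1}{p}[J(y^t)-J(y)]$ term in the claimed bound.

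The only delicate piece is the first, $\langle U_{i_t} w, y^{t+1} - y\rangle$, because $y^{t+1}$ is produced by the random update \eqnok{eqn_update_y} in block $i_t$ and hence is not independent of $i_t$. To decouple it I would split $y^{t+1} - y = (y^t - y) + (y^{t+1} - y^t)$. In the first summand $y^t - y$ is $i_t$-independent, so $\E_{i_t}[\langle U_{i_t} w, y^t - y\rangle] = \tfrac1p\langle w, y^t - y\rangle$; unpacking $w$ and combining with the $-\tfrac{p-1}{p} A x^t$ term above reproduces exactly $\langle(\tfrac1p q_{t-1} - \tfrac{p-1}{p})A x^t - \tfrac1p q_{t-1} A x^{t-1}, y^t - y\rangle$. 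For the second summand I would use that the increment $y^{t+1} - y^t$ lives entirely in block $i_t$, then apply Young's inequality while keeping the projection inside the norm, $\langle U_{i_t} w, y^{t+1}-y^t\rangle \le \tfrac{1}{2\tau_t}\|U_{i_t}w\|_2^2 + \tfrac{\tau_t}{2}\|y^{t+1}-y^t\|_2^2$, and average over $i_t$.

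The crux, and the step I expect to be the main obstacle, is that one must resist bounding $\|U_{i_t} w\|_2^2$ by $\|w\|_2^2$: keeping the projection inside is what matters, because $\E_{i_t}[\|U_{i_t} w\|_2^2] = \tfrac1p\sum_{j=1}^p\|U_j w\|_2^2 = \tfrac1p\|w\|_2^2$ is precisely the identity that supplies the factor $\tfrac1p$ in $\tfrac{q_{t-1}^2\|A\|_2^2}{2p\tau_t}\|x^t - x^{t-1}\|_2^2$ after invoking $\|w\|_2^2 \le q_{t-1}^2\|A\|_2^2\|x^t-x^{t-1}\|_2^2$. Summing the three expectations and regrouping the inner products then yields the stated inequality.
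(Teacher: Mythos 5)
Your proposal is correct and follows essentially the same route as the paper's proof: the same split of $y^{t+1}-y$ into $(y^t-y)+(y^{t+1}-y^t)$, the same expectation identities for $U_{i_t}$, $\bar U_{i_t}$ and the sum over $i \ne i_t$, and the same Cauchy--Schwarz/Young step that keeps $U_{i_t}$ inside the norm so that $\E_{i_t}\bigl[\|U_{i_t}w\|_2^2\bigr] = \tfrac{1}{p}\|w\|_2^2$ supplies the factor $\tfrac{1}{p}$. The point you flag as the crux is exactly the step the paper exploits in its relation \eqnok{cauchy}.
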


\begin{proof}
The definition of $\Delta _t$ in \eqnok{def_triangle_t} can be rewritten as
\beq \label{def_delat_a}
\begin{array}{ll}\Delta _t&=\left\langle {q_{t-1}U_{i_t} A(x^t-x^{t-1})-\bar U_{i_t}Ax^t, y^{t}-y}\right\rangle\\
&\;\;\;\;-\left\langle {q_{t-1}U_{i_t} A(x^t-x^{t-1}), y^{t+1}-y^t}\right\rangle+\tsum_{i \ne i_t}[J_i (y_{i}^{t})-J_i(y_i)].\end{array}
\eeq
Since $i_t$ is uniformly distributed on $\{1,2,...,p\},$ we have
\beq\label{recur_expectation}
\begin{array}{ll}
&\E_{i_t}\left[\left\langle {q_{t-1}U_{i_t} A(x^t-x^{t-1})-\bar U_{i_t}A}x^t, y^{t}-y\right\rangle\right]\\
&=\left\langle{\tfrac{1}{p}q_{t-1}A(x^t-x^{t-1}), y^{t}-y}\right\rangle-\tfrac{p-1}{p}\left\langle {Ax^t, y^{t}-y}\right\rangle]\\
&=\left\langle{\left( \tfrac{1}{p}q_{t-1}-\tfrac{p-1}{p} \right) Ax^t-\tfrac{1}{p}q_{t-1}Ax^{t-1}, y^{t}-y}\right\rangle
\end{array}
\eeq
and
\beq \label{norm_equal}
\E_{i_t} \left[ \tsum_{i \ne i_t}\left(J_i (y_{i}^{t})-J_i(y_i)\right)\right]=\tfrac{p-1}{p} \left[ J (y^{t})-J(y)\right].
\eeq
Observe that
\beq \label{cauchy}
\begin{array}{ll}
\E_{i_t} \left[\left\langle {q_{t-1}U_{i_t}A(x^t-x^{t-1}),y^{t+1}-y^t}\right\rangle \right]&\le \E_{i_t} \left[q_{t-1} \| U_{i_t}A( x^t-x^{t-1})\|_2 \| y^{t+1}-y^t\|_2\right]\\
&\le \E_{i_t} \left[\frac{q_{t-1}^2}{2\tau_t}\| U_{i_t}A(x^t-x^{t-1})\|_2^2+\frac{\tau_t}{2}\| y^{t+1}-y^t\|_2^2\right]\\
&=  \frac{q_{t-1}^2 }{2p\tau_t}\| A(x^t-x^{t-1})\|_2^2+\frac{\tau_t}{2}\E_{i_t} \left[\| y^{t+1}-y^t\|_2^2\right]\\
&\le \frac{q_{t-1}^2 \|A\|_2^2}{2p\tau_t}\| x^t-x^{t-1}\|_2^2+\frac{\tau_t}{2}\E_{i_t} \left[\| y^{t+1}-y^t\|_2^2\right],
\end{array}
\eeq
where the second inequality follows from the Cauchy-Swartz inequality and
the equality follows from the fact that $i_t$ is uniformly distributed on $\{1,2,...,p\}$.
The result immediately follows from \eqnok{def_delat_a}, \eqnok{recur_expectation}, \eqnok{norm_equal}, and
\eqnok{cauchy}.
\end{proof}

\vgap

We are now ready to establish the main convergence properties of the RPD algorithm
for solving saddle point problems over bounded feasible sets.

\begin{theorem} \label{theorem1}
Suppose that the initial point of Algorithm~\ref{algRPD} is chosen such that $x^1=x^0$ and $y^1=\argmax_{y \in Y} \langle A x^1, y \rangle - J(y).$
Also assume that the parameters $\{q_t\}$, $\{\gamma_t\}$, $\{\tau_t\}$, and $\{\eta_t\}$ satisfy
\begin{align}
q_t &= p, \; t=1,..,N-1, \label{stepsize_q1a}\\
\gamma_{t}&=\tfrac{1}{p} q_t-\tfrac{p-1}{p},t=1,...,N-2 \; \mbox{and} \; \gamma_{N-1}=1, \label{stepsize_q1b} \\
\tau_{t-1}&\ge \tau_{t}, \; i=1,..,N-1, \label{stepsize_q2a}\\
\gamma_{t-1}\eta_{t-1}&\ge \gamma_{t}\eta_{t}, \; i=1,..,N-1, \label{stepsize_q2b}\\
p\gamma_t\eta_t\tau_{t+1}&\ge q_{t}^2\|A\|_2^2, \; i=1,..,N-2, \label{stepsize_q3a}\\
\gamma_{N-1}\eta_{N-1}\tau_{N-1} &\ge \|A\|_2^2. \label{stepsize_q3b}
\end{align}
\begin{itemize}
\item [a)]
For any $N \ge 1,$ we have
\beq \label{convergence}
\E[Q_0(\hat z^{N},z)] \le \left( \tsum_{t=1}^{N-1}\gamma_t\right)^{-1} \left [\tfrac{\gamma_{1}\eta_{1}}{2}\Omega _X^2+\tfrac{\tau_{1}}{2}\Omega _Y^2
 \right ], \; \forall z \in Z,
\eeq
where $\hat z^{N}$ is defined in \eqnok{defxbar} and the expectation is taken w.r.t. $[i_{N}]=(i_1,...,i_{N-1})$.
\item [b)]
For any $N \ge 1,$ there exists a function $\delta(y)$ such that $\bbe[\delta(y)] = 0$ for any $y \in Y$ and
\beq \label{cor4}
\E[g_{\delta(y)}(\hat z^N)] \le
 \left( \tsum_{t=1}^{N-1}\gamma_t\right)^{-1} \left [\tfrac{\gamma_{1}\eta_{1}}{2}\Omega _X^2+\tfrac{\tau_{1}}{2}\Omega _Y^2 \right ], \; \forall z \in Z.
\eeq
\end{itemize}
\end{theorem}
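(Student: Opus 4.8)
The plan is to convert the one-step bound of Proposition~\ref{proposition1} into a telescoping estimate and then average by convexity of $Q_0$. For part a) I would first take the conditional expectation $\E_{i_t}[\cdot]$ of \eqnok{recursion1}, insert the bound of Lemma~\ref{lemma1} for $\E_{i_t}[\Delta_t]$, and note that the term $\tfrac{\tau_t}{2}\E_{i_t}[\|y^{t+1}-y^t\|_2^2]$ produced by Lemma~\ref{lemma1} cancels the $-\tfrac{\tau_t}{2}\|y^t-y^{t+1}\|_2^2$ already present in \eqnok{recursion1}. Summing over $t=1,\dots,N-1$ and taking full expectations, the parameter choices \eqnok{stepsize_q1a}--\eqnok{stepsize_q1b} (which force $\gamma_t=\tfrac1p$ for $t\le N-2$ and $\gamma_{N-1}=1$) make the cross term $\langle\gamma_t Ax^{t+1}-Ax^t,\,y^{t+1}-y\rangle$ at step $t$ equal, as a function of the past, to the Lemma~\ref{lemma1} cross term $\langle\tfrac1p Ax^t-Ax^{t-1},\,y^t-y\rangle$ at step $t+1$; these telescope, as do the $J$-terms, leaving a single end term $\langle A(x^N-x^{N-1}),\,y^N-y\rangle$ together with residuals at $t=1$.

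Next I would discharge the leftovers. Using $x^1=x^0$, the $t=1$ cross and $J$ residuals combine into $\tfrac{p-1}{p}[(\langle Ax^1,y^1\rangle-J(y^1))-(\langle Ax^1,y\rangle-J(y))]\ge0$, nonnegative precisely because $y^1=\argmax_{y\in Y}\langle Ax^1,y\rangle-J(y)$, so it may be discarded. For the Bregman terms, the monotonicity \eqnok{stepsize_q2a}--\eqnok{stepsize_q2b} lets a summation-by-parts argument bound the $x$- and $y$-differences by $\tfrac{\gamma_1\eta_1}{2}\Omega_X^2+\tfrac{\tau_1}{2}\Omega_Y^2$ while retaining the negative end terms $-\tfrac{\gamma_{N-1}\eta_{N-1}}{2}\|x^{N-1}-x^N\|_2^2$ and $-\tfrac{\tau_{N-1}}{2}\|y-y^N\|_2^2$. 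The quadratics $\tfrac{q_{t-1}^2\|A\|_2^2}{2p\tau_t}\|x^t-x^{t-1}\|_2^2$ are matched to $-\tfrac{\gamma_t\eta_t}{2}\|x^t-x^{t+1}\|_2^2$ after an index shift (the $t=1$ shifted term vanishes since $x^1=x^0$) and annihilated by \eqnok{stepsize_q3a}, while the end term $\langle A(x^N-x^{N-1}),y^N-y\rangle$ is absorbed by the two retained negatives via Young's inequality and \eqnok{stepsize_q3b}. What survives is $\E[\sum_t\gamma_t Q_0(z^{t+1},z)]\le\tfrac{\gamma_1\eta_1}{2}\Omega_X^2+\tfrac{\tau_1}{2}\Omega_Y^2$, and since $Q_0(\cdot,z)$ is convex and $\hat z^N$ is the $\gamma_t$-weighted average \eqnok{defxbar}, Jensen's inequality gives \eqnok{convergence}.

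For part b) the difficulty is that \eqnok{convergence} holds for each fixed $z$, whereas $g_{\delta}(\hat z^N)$ requires the maximization over $z$ to sit inside the expectation, and $\max_z$ and $\E$ do not commute. I would therefore repeat the computation pathwise, splitting each $\Delta_t$ as $\E_{i_t}[\Delta_t]+(\Delta_t-\E_{i_t}[\Delta_t])$. Inspecting \eqnok{def_triangle_t} and \eqnok{def_delat_a}, the only parts of $\Delta_t$ that depend on the test point $z$ do so through $y$ alone — the inner product against $y^t-y$ and the term $\sum_{i\ne i_t}J_i(y_i)$ — so the sum of their fluctuations defines a function $\Phi(y)$ with $\E_{i_t}[\cdot]=0$ at every step, whence $\E[\Phi(y)]=0$ for each $y$ by the tower property. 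Setting $\delta(y):=-(\sum_t\gamma_t)^{-1}\Phi(y)$ gives $\E[\delta(y)]=0$. All remaining random terms are $z$-independent; in particular the middle part $-\langle q_{t-1}U_{i_t}A(x^t-x^{t-1}),y^{t+1}-y^t\rangle$, whose $1/p$ variance reduction genuinely needs an expectation, can be pulled outside $\max_z$, so its expectation recovers exactly the factor $1/p$ used in \eqnok{stepsize_q3a}. I would then obtain, uniformly in $z$, a pathwise bound $(\sum_t\gamma_t)[Q_0(\hat z^N,z)+\delta(y)]\le\tfrac{\gamma_1\eta_1}{2}\Omega_X^2+\tfrac{\tau_1}{2}\Omega_Y^2+S'$ with a $z$-independent residual $S'$ satisfying $\E[S']\le0$, take $\max_z$ and then $\E$, and divide by $\sum_t\gamma_t$ to reach \eqnok{cor4}.

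The crux, I expect, is the bookkeeping of part b): cleanly separating the $z$-dependent fluctuations (which must be packaged into $\delta(y)$ and shown to have zero mean) from the $z$-independent randomness (which must remain outside the maximization so that the $1/p$ variance reduction survives), all while preserving the pathwise telescoping and sign cancellations that made part a) work.
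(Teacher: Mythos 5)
Your proposal is correct and follows essentially the same route as the paper's proof: the same telescoping of the cross and $J$-terms under \eqnok{stepsize_q1a}--\eqnok{stepsize_q1b}, discarding the $t=1$ residuals via the choice of $y^1$, absorbing the end term $\langle A(x^N-x^{N-1}),y^N-y\rangle$ through Young's inequality with \eqnok{stepsize_q3b}, and, for part b), the same splitting of the fluctuations into a test-point-dependent part (the paper's $\Delta'_{t2}$, your $\Phi(y)$) packaged into $\delta(y)$ and a $z$-independent part left outside the maximization. The only cosmetic difference is that in part a) you take conditional expectations step by step, while the paper carries the zero-mean fluctuation terms $\Delta'_t-\E_{i_t}[\Delta'_t]$ pathwise and takes expectation at the end; both are valid for a fixed $z$.
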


\begin{proof} 
We first show part a).
It follows from Proposition~\ref{proposition1} and Lemma~\ref{lemma1} that
\beq\label{recursion1_new}
\begin{array}{ll}
&\gamma_tQ_0(z^{t+1},z)+\left\langle {\gamma_tAx^{t+1}-Ax^t, y^{t+1}-y}\right\rangle+ (\gamma_t-1)\left[ J(y)-J(y^{t+1})\right] \\
&\le \E_{i_t}[\Delta _t]+\frac{\gamma_t\eta_t}{2}\left[\| x-x^t\|_2^2- \| x^t-x^{t+1}\|_2^2-\| x-x^{t+1}\|_2^2\right]\\
&\quad +\frac{\tau_t}{2}\left[\| y-y^t\|_2^2-\| y-y^{t+1}\|_2^2-\|y^t-y^{t+1} \|_2^2\right]+\Delta _t-\E_{i_t}[\Delta _t]\\
& \le\left\langle{\left( \tfrac{1}{p}q_{t-1}-\tfrac{p-1}{p} \right) Ax^t-\tfrac{1}{p}q_{t-1}Ax^{t-1}, y^{t}-y}\right\rangle+\frac{p-1}{p} \left[ J (y^{t})-J(y)\right]\\
&\quad +\frac{q_{t-1}^2 \|A\|_2^2}{2p\tau_t}\| x^t-x^{t-1}\|_2^2+\frac{\tau_t}{2}\E_{i_t} \left[\| y^{t+1}-y^t\|_2^2\right]+\frac{\gamma_t\eta_t}{2}\left[\| x-x^t\|_2^2- \| x^t-x^{t+1}\|_2^2-\| x-x^{t+1}\|_2^2\right]\\
&\quad +\frac{\tau_t}{2}\left[\| y-y^t\|_2^2-\| y-y^{t+1}\|_2^2-\|y^t-y^{t+1} \|_2^2\right]+\Delta _t-\E_{i_t}[\Delta _t].
\end{array}
\eeq
Denoting
$$\Delta'_t :=\Delta_t -\frac{\tau_t}{2}\| y^t-y^{t+1}\|_2^2,$$
we can rewrite \eqnok{recursion1_new} as
\beq\label{recursion1_new2}
\begin{array}{ll}
&\gamma_tQ_0(z^{t+1},z)+\left\langle {\gamma_tAx^{t+1}-Ax^t, y^{t+1}-y}\right\rangle+ (1-\gamma_t)\left[ J(y^{t+1})-J(y)\right] \\
&\le\left\langle{\left( \tfrac{1}{p}q_{t-1}-\tfrac{p-1}{p} \right) Ax^t-\tfrac{1}{p}q_{t-1}Ax^{t-1}, y^{t}-y}\right\rangle
+\tfrac{p-1}{p} \left[ J (y^{t})-J(y)\right]-\tfrac{\gamma_t\eta_t}{2}\| x^t-x^{t+1}\|_2^2\\
&\quad + \, \tfrac{q_{t-1}^2 \|A\|_2^2}{2p\tau_t}\| x^t-x^{t-1}\|_2^2+\tfrac{\gamma_t\eta_t}{2}\left[\| x-x^t\|_2^2-\| x-x^{t+1}\|_2^2\right]
+\tfrac{\tau_t}{2}\left[\| y-y^t\|_2^2-\| y-y^{t+1}\|_2^2\right]\\
&\quad+ \, \Delta'_t-\E_{i_t}[\Delta'_t].
\end{array}
\eeq
Taking summation from $t=1$ to $N-1$ on both sides of the above inequality, using the assumptions in \eqnok{stepsize_q1a} and \eqnok{stepsize_q1b},
and denoting $z^{[N]}:=\{ (x^t,y^t)\}_{t=1}^N$ and
\beq \label{defB}
{\cal B}_N(z,z^{[N]}):=\tsum_{t=1}^{N-1} \left[ \tfrac{\gamma_t\eta_t}{2}\| x-x^t\|_2^2-\tfrac{\gamma_t\eta_t}{2}\| x-x^{t+1}\|_2^2\right]
+\tsum_{t=1}^{N-1} \left[ \tfrac{\tau_t}{2}\| y-y^t\|_2^2-\tfrac{\tau_t}{2}\| y-y^{t+1}\|_2^2\right],
\eeq
we then conclude that
\beq \label{summation}
\begin{array}{ll}
&\tsum_{t=1}^{N-1}\gamma_tQ_0(z^{t+1},z) \\
&\le {\cal B}_N(z,z^{[N]})-\left\langle{Ax^{N}-Ax^{N-1}, y^{N}-y}\right\rangle+\left\langle{\tfrac{1}{p} Ax^1-Ax^0, y^{1}-y}\right\rangle+\tfrac{p-1}{p} \left[ J (y^{1})-J(y)\right]\\
&\quad +\, \tfrac{p \|A\|_2^2}{2\tau_1}\| x^1-x^0\|_2^2-\tfrac{\gamma_{N-1}\eta_{N-1}}{2} \| x^N-x^{N-1}\|_2^2
- \tsum_{t=1}^{N-2}\left(\tfrac{\gamma_t\eta_t}{2} -\frac{q_{t}^2\|A\|_2^2}{2p\tau_{t+1} }\right) \| x^{t+1}-x^{t}\|_2^2\\
&\quad +\, \tsum_{t=1}^{N-1}\left(\Delta'_t-\E_{i_t}[\Delta'_t]\right)\\
&\le  {\cal B}_N(z,z^{[N]})-\tfrac{\gamma_{N-1}\eta_{N-1}}{2} \| x^N-x^{N-1}\|_2^2-\left\langle {Ax^{N}-Ax^{N-1}, y^{N}-y}\right\rangle+\tsum_{t=1}^{N-1}\left(\Delta'_t-\E_{i_t}[\Delta'_t]\right),
\end{array}
\eeq
where the second inequality follows from \eqnok{stepsize_q3a}, and the facts that $x^1=x^0$ and $y^1=\argmax_{y \in Y} \langle A x^1, y \rangle - J(y).$
Using the above conclusion, the definition of $\hat x^N$ in \eqnok{defxbar}, and the convexity of $Q_0(\hat z, z)$ w.r.t. $\hat z$, we obtain
$$
\begin{array}{ll}
\left(\tsum_{t=1}^{N-1}\gamma_t\right)Q_0(\hat z^{N},z) &\le \tsum_{t=1}^{N-1}\gamma_tQ_0(z^{t+1},z) \\
&\le  {\cal B}_N(z,z^{[N]})-\frac{\gamma_{N-1}\eta_{N-1}}{2} \| x^N-x^{N-1}\|_2^2-\left\langle {Ax^{N}-Ax^{N-1}, y^{N}-y}\right\rangle\\
&\quad  +\tsum_{t=1}^{N-1}\left(\Delta'_t-\E_{i_t}[\Delta'_t]\right),
\end{array}
$$
which, in view of the fact that
\beq\label{Cauchy2}
-\left\langle {Ax^{N}-Ax^{N-1}, y^{N}-y}\right\rangle \le  \tfrac{\| A\|^2}{2\tau_{N-1}}\| x^N-x^{N-1}\|_2^2+\tfrac{\tau_{N-1}}{2}\| y^{N}-y\|_2^2,
\eeq
then implies that
$$
\begin{array}{lll}
\left(\tsum_{t=1}^{N-1}\gamma_t\right)Q_0(\hat z^{N},z)  &\le &{\cal B}_N(z,z^{[N]}) +\frac{\tau_{N-1}}{2}\| y^{N}-y\|_2^2-\left(\frac{\gamma_{N-1}\eta_{N-1}}{2}- \frac{\|A\|_2^2}{2\tau_{N-1}}\right) \| x^N-x^{N-1}\|_2^2 \\
&& +\tsum_{t=1}^{N-1}\left(\Delta'_t-\E_{i_t}[\Delta'_t]\right).
\end{array}
$$
Now it follows from \eqnok{stepsize_q2a}, \eqnok{stepsize_q2b}, and \eqnok{defB} that
$$
\begin{array}{ll}
&{\cal B}_N(z,z^{[N]})+\frac{\tau_{N-1}}{2}\| y^{N}-y\|_2^2 \\
&= \tfrac{\gamma_1\eta_1}{2}\| x-x^1\|_2^2-\tsum_{t=1}^{N-2} \left( \tfrac{\gamma_t\eta_t}{2}
-\tfrac{\gamma_{t+1}\eta_{t+1}}{2}\right)\| x-x^{t+1}\|_2^2 -\tfrac{\gamma_{N-1}\eta_{N-1}}{2}\| x-x^N\|_2^2\\
&\quad + \, \tfrac{\tau_1}{2}\| y-y^1\|_2^2-\tsum_{t=1}^{N-2} \left( \tfrac{\tau_t}{2}-\tfrac{\tau_{t+1}}{2}\right)\| y-y^{t+1}\|_2^2 \\
&\le \tfrac{\gamma_1\eta_1}{2}\| x-x^1\|_2^2- \tfrac{\gamma_{N-1}\eta_{N-1}}{2}\| x-x^N\|_2^2 +\tfrac{\tau_1}{2}\| y-y^1\|_2^2\\
&\le \tfrac{\gamma_1\eta_1}{2}\Omega_X^2 +\tfrac{\tau_1}{2}\Omega_Y^2.
\end{array}
$$
Combining the above two relations, and noting that $\frac{\gamma_{N-1}\eta_{N-1}}{2} \ge \frac{\|A\|_2^2}{2\tau_{N-1}}$ by \eqnok{stepsize_q3b}, we obtain
\beq\label{main_ieq}
\left(\tsum_{t=1}^{N-1}\gamma_t\right)Q_0(\hat z^{N},z) \le
\tfrac{\gamma_{1}\eta_{1}}{2}\Omega_X^2+\tfrac{\tau_{1}}{2}\Omega_Y^2  +\tsum_{t=1}^{N-1}\left(\Delta'_t-\E_{i_t}[\Delta'_t]\right).
\eeq
Taking expectation w.r.t $i_t, t=1,2,...,N-1,$ noting that $\E_{i_t} [\Delta'_t-\E_{i_t}[\Delta'_t]]= 0$ and $\tfrac{p-1}{p} \le 1,$ we obtain
$$
\begin{array}{ll}
\E_{[i_N]}[Q_0(\hat z^{N},z)] \le & \left(\tsum_{t=1}^{N-1}\gamma_t\right)^{-1} \left [\tfrac{\gamma_{N-1}\eta_{N-1}}{2}\Omega _X^2
+\tfrac{\tau_{N-1}}{2}\Omega _Y^2 \right ].
\end{array}
$$

The proof of part b) is similar to that of part a). The main idea is to break down the perturbation term $\Delta'_t$
into two parts, one independent on $y$ and the other depending on $y.$ More specifically, let us denote
\begin{align}
\Delta'_{t1}&=\left\langle {q_{t-1}U_{i_t} A(x^t-x^{t-1}), y^{t}}\right\rangle-\left\langle {{\bar U_{i_t}A}x^t, y^{t}}\right\rangle
+\tsum_{i\ne i_t} J_i(y_i^t)-\tfrac{\tau_t}{2}\| y^t-y^{t+1}\|_2^2, \label{def_Gamma_1}\\
\Delta'_{t2}&=\left\langle {q_{t-1}U_{i_t} A(x^t-x^{t-1}), y}\right\rangle-\left\langle {{\bar U_{i_t}A}x^t, y}\right\rangle+\tsum_{i\ne i_t} J_i(y_i).  \label{def_Gamma_2}
\end{align}
Clearly, we have
\beq \label{def_Gamma_12}
\Delta'_{t}=\Delta'_{t1}+\Delta'_{t2}.
\eeq
Using exactly the same analysis as in part a) except putting the perturbation term $\Delta'_{t2}$ to the left hand side of \eqnok{main_ieq}, we have
$$
\begin{array}{ll}
&\left(\tsum_{t=1}^{N-1}\gamma_t\right)Q_0(\hat z^{N},z) +\tsum_{t=1}^{N-1}\left( \Delta'_{t2}-\E_{i_t}[\Delta'_{t2}]\right)\le \tfrac{\gamma_{1}\eta_{1}}{2}\Omega_X^2+\tfrac{\tau_{1}}{2}\Omega_Y^2
+\tsum_{t=1}^{N-1}\left( \Delta'_{t1} -\E_{i_t}[\Delta'_{t1}]\right).
\end{array}
$$
Denoting
$$\delta(y)=\left(\tsum_{t=1}^{N-1}\gamma_t\right)^{-1}\tsum_{t=1}^{N-1}\left( \Delta'_{t2} -\E_{i_t}[\Delta'_{t2}]\right),$$
we then conclude from the above inequality that
$$
\left(\tsum_{t=1}^{N-1}\gamma_t\right)[Q_0(\hat z^{N},z) + \delta(y)]
 \le \tfrac{\gamma_{1}\eta_{1}}{2}\Omega_X^2+\tfrac{\tau_{1}}{2}\Omega_Y^2
+ \tsum_{t=1}^{N-1}\left( \Delta'_{t1} -\E_{i_t}[\Delta'_{t1}]\right).
$$
The result in \eqnok{cor4} then immediately follows by maximizing both sides of the above inequality w.r.t $z=(x,y)$, and
taking expectation w.r.t $i_t, t=1,2,...,N-1$, and using the definition of $g_\delta$ in \eqnok{def_gdelta}.
\end{proof}

\vgap

While there are many options to specify the parameters $\eta_t, \tau_t$, and $\gamma_t$
of the RPD method such that the assumptions in \eqnok{stepsize_q1a}-\eqnok{stepsize_q3b}
are satisfied,
below we provide a specific parameter setting which leads to an optimal rate of convergence
for the RPD algorithm in terms of its dependence on $N$.

\begin{corollary} \label{cor1}
Suppose that the initial point of Algorithm~\ref{algRPD} is set to $x^1=x^0$ and $y^1=\argmax_{y \in Y} \langle A x^1, y \rangle - J(y).$
Also assume that $q_t$ is set to \eqnok{stepsize_q1a}, and $\{\gamma_t\}$, $\{\tau_t\}$, and $\{\eta_t\}$ are set to
\begin{align}
\gamma_t&=\tfrac{1}{p}, \; t=1,2,...,N-2, \;\; \mbox{and} \;\; \gamma_{N-1}=1, \label{stepsize1_q1}\\
\tau_t&=\tfrac{\sqrt{p}\| A\|\Omega_X}{\Omega_Y},  \label{stepsize1_q2a}\\
\eta_t&=\tfrac{p^{\frac{3}{2}}\| A\|\Omega_Y}{\Omega_X}, t=1,2,...,N-2, \;\;
\mbox{and}\;\; \eta_{N-1}=\tfrac{\sqrt{p}\| A\|\Omega_Y}{\Omega_X}.  \label{stepsize1_q2b}
\end{align}
Then for any $N \ge 1,$ we have
\beq \label{convergence2}
\E[Q_0(\hat z^{N},z)] \le \tfrac{p^{3/2}\|A\|_2\Omega_X \Omega_Y}{N+p-2}, \; \forall z \in Z,
\eeq
Moreover, there exists a function $\delta(y)$ such that $\bbe[\delta(y)] = 0$ for any $y \in Y$ and
\beq \label{convergence2b}
\E[g_{\delta(y)}(\hat z^{N})] \le  \tfrac{p^{3/2}\|A\|_2\Omega_X \Omega_Y}{N+p-2}.
\eeq
\end{corollary}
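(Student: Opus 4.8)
The plan is to recognize Corollary~\ref{cor1} as a direct specialization of Theorem~\ref{theorem1}: I would first verify that the concrete parameter choices \eqnok{stepsize_q1a} and \eqnok{stepsize1_q1}--\eqnok{stepsize1_q2b} satisfy every hypothesis \eqnok{stepsize_q1a}--\eqnok{stepsize_q3b}, and then simply substitute the resulting constants into the bounds \eqnok{convergence} and \eqnok{cor4}. No new estimate is required beyond Theorem~\ref{theorem1}; the work is entirely bookkeeping and arithmetic.

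For the verification, with $q_t = p$ the defining relation \eqnok{stepsize_q1b} gives $\gamma_t = \tfrac1p\,p - \tfrac{p-1}{p} = \tfrac1p$ for $t \le N-2$, together with $\gamma_{N-1}=1$, exactly matching \eqnok{stepsize1_q1}; thus \eqnok{stepsize_q1a} and \eqnok{stepsize_q1b} hold. Since $\tau_t \equiv \sqrt{p}\,\|A\|\Omega_X/\Omega_Y$ is constant in $t$, condition \eqnok{stepsize_q2a} holds with equality. A short computation shows the product $\gamma_t\eta_t$ is also constant: for $t \le N-2$ one has $\gamma_t\eta_t = \tfrac1p\cdot p^{3/2}\|A\|\Omega_Y/\Omega_X = \sqrt{p}\,\|A\|\Omega_Y/\Omega_X$, and for $t=N-1$, $\gamma_{N-1}\eta_{N-1}=1\cdot\sqrt{p}\,\|A\|\Omega_Y/\Omega_X$ gives the same value, so \eqnok{stepsize_q2b} holds with equality. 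For \eqnok{stepsize_q3a}, $p\gamma_t\eta_t\tau_{t+1} = p\cdot\big(\sqrt{p}\,\|A\|\Omega_Y/\Omega_X\big)\big(\sqrt{p}\,\|A\|\Omega_X/\Omega_Y\big) = p^2\|A\|^2 = q_t^2\|A\|^2$, again an equality; and for \eqnok{stepsize_q3b}, $\gamma_{N-1}\eta_{N-1}\tau_{N-1} = p\|A\|^2 \ge \|A\|^2$ since $p\ge1$.

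It then remains to evaluate the right-hand side of \eqnok{convergence}. The bracketed numerator becomes $\tfrac{\gamma_1\eta_1}{2}\Omega_X^2 + \tfrac{\tau_1}{2}\Omega_Y^2 = \tfrac12\sqrt{p}\,\|A\|\Omega_X\Omega_Y + \tfrac12\sqrt{p}\,\|A\|\Omega_X\Omega_Y = \sqrt{p}\,\|A\|\Omega_X\Omega_Y$, the two contributions balancing because $\Omega_X$ and $\Omega_Y$ enter symmetrically. The weights sum to $\sum_{t=1}^{N-1}\gamma_t = (N-2)\tfrac1p + 1 = (N+p-2)/p$. Multiplying the reciprocal of this sum by the numerator yields $\tfrac{p}{N+p-2}\cdot\sqrt{p}\,\|A\|\Omega_X\Omega_Y = \tfrac{p^{3/2}\|A\|\Omega_X\Omega_Y}{N+p-2}$, which is precisely \eqnok{convergence2}. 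Because the right-hand side of \eqnok{cor4} is identical to that of \eqnok{convergence}, the bound \eqnok{convergence2b} on $\E[g_{\delta(y)}(\hat z^N)]$ follows from Theorem~\ref{theorem1}b) by the same substitution.

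There is no genuine obstacle here beyond careful algebra; the only step demanding attention is checking \eqnok{stepsize_q3a}--\eqnok{stepsize_q3b}. The noteworthy point is that these parameters are engineered so that \eqnok{stepsize_q2b} and \eqnok{stepsize_q3a} are tight (equalities) and the two terms in the numerator are balanced, and it is exactly this saturation of the constraints that produces the optimal $\mathcal{O}(1/N)$ dependence claimed in the corollary.
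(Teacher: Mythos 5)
Your proposal is correct and follows essentially the same route as the paper: verify that the concrete parameters satisfy conditions \eqnok{stepsize_q1b}--\eqnok{stepsize_q3b}, compute $\tsum_{t=1}^{N-1}\gamma_t = (N+p-2)/p$ and the numerator $\tfrac{\gamma_1\eta_1}{2}\Omega_X^2 + \tfrac{\tau_1}{2}\Omega_Y^2 = \sqrt{p}\,\|A\|_2\Omega_X\Omega_Y$, and substitute into \eqnok{convergence} and \eqnok{cor4}. Your write-up is in fact more explicit than the paper's (which dismisses the verification as ``easy to verify''), and your checks of the tightness of \eqnok{stepsize_q2b} and \eqnok{stepsize_q3a} are accurate.
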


\begin{proof}
It is easy to verify that $\gamma_t$, $\tau_t$, and $\eta_t$ defined in
\eqnok{stepsize1_q1}-\eqnok{stepsize1_q2b} satisfy \eqnok{stepsize_q1b}-\eqnok{stepsize_q3b}.
Moreover, it follows from \eqnok{stepsize1_q1}-\eqnok{stepsize1_q2b} that
$$\tsum_{t=1}^{N-1}\gamma_t=\tfrac{N+p-2}{p}, \ \
\tfrac{\gamma_{1}\eta_{1}}{2}\Omega_X^2=\tfrac{\sqrt{p}\|A\|_2\Omega_X\Omega_Y}{2} \ \
\mbox{and} \ \ \tfrac{\tau_{1}}{2}\Omega_Y^2=\tfrac{\sqrt{p} \|A\|_2\Omega_X\Omega_Y}{2}.$$
The results then follow by plugging these identities into \eqnok{convergence} and \eqnok{cor4}.
\end{proof}

\vgap

We now make some remarks about the convergence results  obtained in Theorem~\ref{theorem1} and
Corollary~\ref{cor1}. Observe that, in the view of \eqnok{convergence2}, the total number of iterations
required by the RPD algorithm to find an $\epsilon$-solution of problem~\eqnok{spp}, i.e.,
a point $\hat z \in Z$ such that $\E[Q_0(\hat z, z)] \le \epsilon$ for any $z \in Z$, can be bounded by
$
{\cal O}(p^{3/2}\|A\|_2\Omega_X \Omega_Y/\epsilon).
$
This bound is not improvable in terms of its dependence on $\epsilon$ for a given $p$ (see discussions in \cite{CheLanOu13-1}).
It should be noted, however, that the number of dual subprobems
to be solved in the RPD algorithm is larger than the one required by the deterministic primal-dual method, i.e.,
${\cal O}(p\|A\|_2\Omega_X \Omega_Y/\epsilon)$, by a factor of  $\sqrt{p}$.
On the other hand, in comparison with stochastic algorithms such as the stochastic mirror descent (SMD)
method (see \cite{NJLS09-1,Lan10-3,DangLan13-1}), Algorithm~\ref{algRPD} exhibits a significantly better dependence
on $\epsilon$, as the latter algorithm would require ${\cal O}(1/\epsilon^2)$ iterations to find an $\epsilon$-solution
of problem \eqnok{spp}-\eqnok{spp1}.

\subsection{Smooth bilinear saddle point problems over bounded feasible sets}
In this section, we assume that $J_i(y_i), i=1,2,\ldots,p$, in \eqnok{spp}-\eqnok{spp1} are strongly convex functions.
Moreover, without loss of generality we assume that their strong convexity modulus is given by $1$.
Under these assumptions, the objective function of \eqnok{spp} is a smooth convex function, which explains why these
problems are called smooth bilinear saddle point problems.
Our goal is to show that the RPD algorithm, when equipped with properly specified algorithmic parameters,
exhibits an optimal ${\cal O} (1/N^2)$ rate of convergence for solving this class of saddle point problems.

Similar to Proposition~\ref{proposition1}, we first establish an important recursion
for the RPD algorithm applied to smooth bilinear saddle point problems.
Note that this result involves an extra parameter $\theta_t$ in comparison with Proposition~\ref{proposition1}.

\begin{proposition} \label{proposition2}
Let $z^t = (x^t, y^t)$, $t = 1, \ldots, N$ be generated by the RPD algorithm. For any $z \in Z$, we have
\beq \label{recursion33}
\begin{array}{ll}
&\gamma_tQ_0(z^{t+1},z)+\left\langle {\gamma_tAx^{t+1}-\theta_tAx^t, y^{t+1}-y}\right\rangle+ (\theta_t-\gamma_t)\left[ J(y^{t+1})-J(y)\right]- \tilde \Delta_t \\
&\le \frac{\gamma_t\eta_t}{2}\left[\| x-x^t\|_2^2- \| x^t-x^{t+1}\|_2^2-\| x-x^{t+1}\|_2^2\right]+\frac{\theta_t \tau_t}{2}\| y-y^t\|_2^2\\
&\quad -\theta_t \frac{1+\tau_t}{2} \| y-y^{t+1}\|_2^2-\frac{\theta_t\tau_t}{2 }\| y^t-y^{t+1}\|_2^2
\end{array}
\eeq
for any $\theta_t \ge 0$,
where
\beq \label{def_t_delta}
\begin{array}{ll}
\tilde \Delta_t &:= \left\langle {q_{t-1}\theta_tU_{i_t} A(x^t-x^{t-1}), y^{t+1}-y}\right\rangle-\left\langle {\theta_t\bar U_{i_t}Ax^t, y^{t}-y}\right\rangle\\
&\;\;\;\;+\tsum_{i \ne i_t}\theta_t[J_j(y_{i}^{t})-J_i(y_i) + \tfrac{1}{2} \| y-y_i^{t+1}\|_2^2].
\end{array}
\eeq
\end{proposition}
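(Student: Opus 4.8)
The plan is to follow the same two-ingredient structure as in the proof of Proposition~\ref{proposition1}: combine the optimality conditions of the primal update \eqnok{eqn_update_x} and the dual update \eqnok{eqn_update_y}, but now exploiting the strong convexity of $J_{i_t}$ with modulus $1$ and inserting the scaling factor $\theta_t$ on the dual inequality. First I would record the optimality condition for the $x$-subproblem \eqnok{eqn_update_x}. This step is untouched by the strong convexity hypothesis, so it still yields exactly \eqnok{observation3}, namely
\[
Q_0(z^{t+1},z)+\langle Ax^{t+1}, y^{t+1}-y\rangle+J(y)-J(y^{t+1}) \le \tfrac{\eta_t}{2}\left[\|x-x^t\|_2^2-\|x^t-x^{t+1}\|_2^2-\|x-x^{t+1}\|_2^2\right],
\]
and I would multiply it through by $\gamma_t$.

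The genuinely new ingredient is the dual optimality condition. Since $J_{i_t}$ is strongly convex with modulus $1$, the objective in \eqnok{eqn_update_y} is $(\tau_t+1)$-strongly convex, so the three-point inequality for the $i_t$-block reads
\[
\langle -U_{i_t}A\bar x^t, y^{t+1}-y\rangle + J_{i_t}(y_{i_t}^{t+1})-J_{i_t}(y_{i_t}) + \tfrac{\tau_t}{2}\|y_{i_t}^t-y_{i_t}^{t+1}\|_2^2 + \tfrac{\tau_t+1}{2}\|y_{i_t}-y_{i_t}^{t+1}\|_2^2 \le \tfrac{\tau_t}{2}\|y_{i_t}-y_{i_t}^t\|_2^2,
\]
which is the analogue of \eqnok{observation1} with the extra curvature term $\tfrac12\|y_{i_t}-y_{i_t}^{t+1}\|_2^2$. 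I would scale this inequality by $\theta_t$ and then substitute the same block identities already used in \eqnok{observation111} and \eqnok{observation112}: the expansion of $\langle -U_{i_t}A\bar x^t, y^{t+1}-y\rangle$ through $\bar x^t=q_{t-1}(x^t-x^{t-1})+x^t$ and $U_{i_t}+\bar U_{i_t}=I$, the telescoping of the $J$ values into $J(y^{t+1})-J(y)-\tsum_{i\ne i_t}[J_i(y_i^t)-J_i(y_i)]$, and the norm equalities $\|y_{i_t}^t-y_{i_t}^{t+1}\|_2^2=\|y^t-y^{t+1}\|_2^2$ and $\|y_{i_t}-y_{i_t}^t\|_2^2-\|y_{i_t}-y_{i_t}^{t+1}\|_2^2=\|y-y^t\|_2^2-\|y-y^{t+1}\|_2^2$.

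I expect the one delicate step to be the treatment of the extra curvature term. I would split the block norm over the whole space via the block decomposition $\|y_{i_t}-y_{i_t}^{t+1}\|_2^2=\|y-y^{t+1}\|_2^2-\tsum_{i\ne i_t}\|y_i-y_i^{t+1}\|_2^2$. The full-space piece $\tfrac{\theta_t}{2}\|y-y^{t+1}\|_2^2$ combines with $\tfrac{\theta_t\tau_t}{2}\|y-y^{t+1}\|_2^2$ to produce exactly the strong-convexity contribution $\theta_t\tfrac{1+\tau_t}{2}\|y-y^{t+1}\|_2^2$ on the right-hand side of \eqnok{recursion33}, while the residual $-\tfrac{\theta_t}{2}\tsum_{i\ne i_t}\|y_i-y_i^{t+1}\|_2^2$ is absorbed, together with the off-block perturbation terms, into the definition of $\tilde\Delta_t$ in \eqnok{def_t_delta}. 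This is precisely why $\tilde\Delta_t$ carries the extra summand $\tfrac12\|y_i-y_i^{t+1}\|_2^2$ that is absent from $\Delta_t$.

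Finally I would add the $\gamma_t$-scaled primal inequality to the $\theta_t$-scaled dual inequality. The cross terms merge as $\gamma_t\langle Ax^{t+1},y^{t+1}-y\rangle-\theta_t\langle Ax^t,y^{t+1}-y\rangle=\langle\gamma_tAx^{t+1}-\theta_tAx^t,y^{t+1}-y\rangle$, and the objective values merge as $\gamma_t[J(y)-J(y^{t+1})]+\theta_t[J(y^{t+1})-J(y)]=(\theta_t-\gamma_t)[J(y^{t+1})-J(y)]$, which together with the $x$- and $y$-bounds collected above yields \eqnok{recursion33}. Apart from the curvature bookkeeping just described, every remaining manipulation is a direct transcription of the computation in Proposition~\ref{proposition1} with the factor $\theta_t$ inserted on the dual side, so I anticipate no further obstacle.
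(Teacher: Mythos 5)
Your proposal is correct and follows essentially the same route as the paper's own proof: the $\gamma_t$-scaled primal inequality \eqnok{observation3}, the $(1+\tau_t)$-strongly convex three-point inequality for the dual block scaled by $\theta_t$, the block identities \eqnok{observation111}--\eqnok{observation112}, and the same splitting $\|y_{i_t}-y_{i_t}^{t+1}\|_2^2=\|y-y^{t+1}\|_2^2-\tsum_{i\ne i_t}\|y_i-y_i^{t+1}\|_2^2$ to generate the $\theta_t\tfrac{1+\tau_t}{2}\|y-y^{t+1}\|_2^2$ term and push the residual blocks into $\tilde\Delta_t$. Your bookkeeping of that curvature term (with the summand $\tfrac12\|y_i-y_i^{t+1}\|_2^2$) is in fact the correct reading of the paper's definition \eqnok{def_t_delta}, which contains a typo there.
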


\begin{proof}
It follows from the optimality condition of problem \eqnok{eqn_update_y} (e.g., Lemma 6 of \cite{LaLuMo11-1} and Lemma 2 of \cite{GhaLan12-2a})
and the strong convexity of $J_{i_t}(y_{i_t})$ (modulus $1$) that for all $y \in Y$,
\beq \label{observations5}
\begin{array}{ll}
&\left\langle {-U_{i_t}A{\bar x}^t, y^{t+1}-y}\right\rangle +J_{i_t}(y_{i_t}^{t+1})-J_{i_t}(y_{i_t}) \\
&\le \frac{\tau_t }{2}\| y_{i_t}-y_{i_t}^t\|_2^2-\frac{\tau_t}{2 } \| y_{i_t}^t-y_{i_t}^{t+1}\|_2^2-\frac{1+\tau_t}{2 }\| y_{i_t}-y_{i_t}^{t+1}\|_2^2.
\end{array}
\eeq
This relation, in view of the observations in \eqnok{observation111} and \eqnok{observation112}, then implies that
\beq\label{observation3a}
\begin{array}{ll}
&\left\langle {-Ax^t, y^{t+1}-y}\right\rangle-\left\langle {q_{t-1}U_{i_t} A(x^t-x^{t-1}), y^{t+1}-y}\right\rangle+\left\langle {\bar U_{i_t}Ax^t, y^{t}-y}\right\rangle+J (y^{t+1})-J(y)\\
&\le \tsum_{i \ne i_t}[J_i (y_{i}^{t})-J_i(y_i)] +\tfrac{\tau_t}{2}\left[\| y-y^t\|_2^2-\| y-y^{t+1}\|_2^2-\|y^t-y^{t+1} \|_2^2\right]-\tfrac{1}{2}\| y_{i_t}-y_{i_t}^{t+1}\|_2^2\\
&= \tsum_{i \ne i_t} [J_i (y_{i}^{t})-J_i(y_i)]+  \frac{\tau_t}{2}\| y-y^t\|_2^2-\tfrac{1+\tau_t}{2 } \| y-y^{t+1}\|_2^2
 -\frac{\tau_t}{2 }\| y^t-y^{t+1}\|_2^2 +\tfrac{1}{2} \tsum_{i \ne i_t}^p \| y_i-y_i^{t+1}\|_2^2,
\end{array}
\eeq
Multiplying both sides of the above inequality by $\theta_t$ and
both sides of \eqnok{observation3} by $\gamma_t$, and then adding them up, we obtain
\[
\begin{array}{ll}
&\gamma_tQ_0(z^{t+1},z)+\left\langle {\gamma_tAx^{t+1}-\theta_tAx^t, y^{t+1}-y}\right\rangle+ (\theta_t-\gamma_t)\left[ J(y^{t+1})-J(y)\right]\\
&-\left\langle {q_{t-1}\theta_tU_{i_t} A(x^t-x^{t-1}), y^{t+1}-y}\right\rangle+\left\langle {\theta_t\bar U_{i_t}Ax^t, y^{t}-y}\right\rangle\\
&\le  \tsum_{i \ne i_t}\theta_t[J_i(y_{i}^{t})-J_i(y_i)] + \frac{\gamma_t\eta_t}{2}\left[\| x-x^t\|_2^2- \| x^t-x^{t+1}\|_2^2-\| x-x^{t+1}\|_2^2\right]
+ \frac{\theta_t \tau_t}{2}\| y-y^t\|_2^2\\
&-\theta_t\left(\frac{1+\tau_t}{2 }\right) \| y-y^{t+1}\|_2^2-\frac{\theta_t\tau_t}{2 }\| y^t-y^{t+1}\|_2^2
+\tsum_{i \ne i_t} \tfrac{\theta_t}{2}\| y_i-y_i^{t+1}\|_2^2,
\end{array}
\]
which, in view of the definition of $\tilde \Delta_t$ in \eqnok{def_t_delta}, then implies \eqnok{recursion33}.
\end{proof}

\vgap

The following lemma provides an upper bound on $\E_{i_t}[\tilde\Delta_t].$
\begin{lemma} \label{lemma2_delta}
Let $\tilde \Delta_t$ be defined in \eqnok{def_t_delta}.
If $i_t$ is uniformly distributed on $\{1,2,...,p\},$ then
$$\begin{array}{ll}
\E_{i_t}[\tilde\Delta _t] \le& \left\langle{\left( \tfrac{1}{p}q_{t-1}\theta_t-\tfrac{p-1}{p}\theta_t \right) Ax^t-\tfrac{1}{p}q_{t-1}\theta_tAx^{t-1}, y^{t}-y}\right\rangle
+\tfrac{p-1}{p} \theta_t\left[ J (y^{t})-J(y)\right]\\
&+\tfrac{q_{t-1}^2 \theta_t\|A\|_2^2}{2p\tau_t}\| x^t-x^{t-1}\|_2^2+\tfrac{\tau_t\theta_t}{2}\E_{i_t} \left[\| y^{t+1}-y^t\|_2^2\right]+\frac{p-1}{2p}\theta_t\| y-y^{t}\|_2^2.
\end{array}
$$
\end{lemma}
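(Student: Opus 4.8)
The plan is to mirror the proof of Lemma~\ref{lemma1} almost verbatim, carrying the multiplicative factor $\theta_t$ through each estimate and treating the one genuinely new ingredient—the term $\tsum_{i\ne i_t}\tfrac{\theta_t}{2}\|y-y_i^{t+1}\|_2^2$ appearing in \eqnok{def_t_delta}—separately. First I would rewrite $\tilde\Delta_t$ in the style of \eqnok{def_delat_a}, splitting the inner product $\langle q_{t-1}\theta_t U_{i_t}A(x^t-x^{t-1}),\,y^{t+1}-y\rangle$ according to $y^{t+1}-y=(y^t-y)+(y^{t+1}-y^t)$, grouping the $y^t-y$ piece with $-\langle\theta_t\bar U_{i_t}Ax^t,\,y^t-y\rangle$, and leaving the $y^{t+1}-y^t$ piece to be bounded by Cauchy--Schwarz. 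I would also use the fact that for $i\ne i_t$ the dual block is not updated, so $y_i^{t+1}=y_i^t$ and hence $\tsum_{i\ne i_t}\tfrac{\theta_t}{2}\|y-y_i^{t+1}\|_2^2=\tsum_{i\ne i_t}\tfrac{\theta_t}{2}\|y_i-y_i^t\|_2^2$.

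Next I would take $\E_{i_t}[\cdot]$ term by term, using that $i_t$ is uniform on $\{1,\dots,p\}$ so that $\E_{i_t}[U_{i_t}]=\tfrac1p I_m$ and $\E_{i_t}[\bar U_{i_t}]=\tfrac{p-1}{p}I_m$. The $y^t-y$ group reproduces \eqnok{recur_expectation} scaled by $\theta_t$, yielding the first inner-product term of the claimed bound; the sum $\tsum_{i\ne i_t}\theta_t[J_i(y_i^t)-J_i(y_i)]$ reproduces \eqnok{norm_equal} scaled by $\theta_t$, giving $\tfrac{p-1}{p}\theta_t[J(y^t)-J(y)]$; and the Cauchy--Schwarz step follows \eqnok{cauchy} with weight $\tau_t$, only now the single factor $\theta_t$ is split so that $a=q_{t-1}\sqrt{\theta_t}\|U_{i_t}A(x^t-x^{t-1})\|_2$ and $b=\sqrt{\theta_t}\|y^{t+1}-y^t\|_2$ in Young's inequality $ab\le \tfrac{a^2}{2\tau_t}+\tfrac{\tau_t}{2}b^2$, after which $\E_{i_t}\|U_{i_t}A(x^t-x^{t-1})\|_2^2=\tfrac1p\|A(x^t-x^{t-1})\|_2^2\le\tfrac1p\|A\|_2^2\|x^t-x^{t-1}\|_2^2$ delivers the two middle terms.

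Finally, for the new term I would compute $\E_{i_t}\bigl[\tsum_{i\ne i_t}\tfrac{\theta_t}{2}\|y_i-y_i^t\|_2^2\bigr]=\tfrac{p-1}{p}\cdot\tfrac{\theta_t}{2}\tsum_{i=1}^p\|y_i-y_i^t\|_2^2=\tfrac{p-1}{2p}\theta_t\|y-y^t\|_2^2$, since each block index $i$ satisfies $i\ne i_t$ with probability $\tfrac{p-1}{p}$. Adding the four contributions gives exactly the asserted inequality. I expect no serious obstacle here; the only points requiring care are bookkeeping the $\theta_t$ factor through the Young step (a single rather than a squared power) and recognizing that the non-updated blocks make the final quadratic term reduce to $\|y-y^t\|_2^2$ rather than $\|y-y^{t+1}\|_2^2$—this is precisely what distinguishes the smooth case from Lemma~\ref{lemma1} and what ultimately enables the $\mathcal{O}(1/N^2)$ telescoping in the sequel.
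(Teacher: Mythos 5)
Your proposal is correct and follows essentially the same route as the paper: the paper also rewrites $\tilde\Delta_t$ via the split $y^{t+1}-y=(y^t-y)+(y^{t+1}-y^t)$ and then reuses \eqnok{recur_expectation}, \eqnok{norm_equal}, and \eqnok{cauchy} scaled by $\theta_t\ge 0$, together with the identity $\E_{i_t}\bigl[\tsum_{i\ne i_t}\theta_t\|y_i-y_i^{t+1}\|_2^2\bigr]=\tfrac{p-1}{p}\theta_t\|y-y^t\|_2^2$ for the new quadratic term, which is exactly your observation that non-updated blocks satisfy $y_i^{t+1}=y_i^t$. Your only cosmetic deviation is re-running Young's inequality with the factor $\sqrt{\theta_t}$ split between the two sides rather than simply multiplying the expectation bound \eqnok{cauchy} by $\theta_t$; the two are equivalent.
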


\begin{proof}
By the definition of $\tilde\Delta  _t$ in \eqnok{def_t_delta}, we have
$$\begin{array}{ll}
\tilde\Delta  _t=&\left\langle {q_{t-1}\theta_tU_{i_t} A(x^t-x^{t-1})-\theta_t\bar U_{i_t}Ax^t, y^{t}-y}\right\rangle
-\left\langle {q_{t-1}\theta_t U_{i_t} A(x^t-x^{t-1}), y^{t+1}-y^t}\right\rangle\\
&+\tsum_{i \ne i_t}\theta_t[J_i (y_{i}^{t})-J_i(y_i) + \tfrac{1}{2} \| y_i-y_i^{t+1}\|_2^2],
\end{array}
$$
The result then immediately follows from the above identity,
the relations \eqnok{recur_expectation}, \eqnok{norm_equal}, \eqnok{cauchy}, and the facts that $\theta_t \ge 0$ and
\beq \label{norm_equal2}
\E_{i_t} \left[ \tsum_{i \ne i_t}\theta_t\| y_i-y_i^{t+1}\|_2^2\right]=\tfrac{p-1}{p}\theta_t\| y-y^{t}\|_2^2.
\eeq
\end{proof}

\vgap

We are now ready to establish the main convergence properties
of the RPD algorithm applied to smooth bilinear saddle point problems.

\begin{theorem} \label{theorem2}
Suppose that the initial point of Algorithm~\ref{algRPD} is chosen such that $x^1=x^0$ and $y^1=\argmax_{y \in Y} \langle A x^1, y \rangle - J(y).$
Also assume that the parameters $q_t$ and the weights $\gamma_t, \theta_t$ are set to
\begin{align}
\theta_t&= \tfrac{1}{p}q_t \theta_{t+1}, \; i=1,\ldots,N-1, \label{stepsize_stronglyconvex_q1a}\\
\gamma_{t}&=(\tfrac{1}{p}q_t-\tfrac{p-1}{p})\theta_{t+1},t=1,\ldots,N-2, \label{stepsize_stronglyconvex_q1b}\\
\gamma_{N-1}&=\theta_{N-1}, \label{stepsize_stronglyconvex_q2a}\\
\theta_t\left(1+ \tau_{t}\right) &\ge \theta_{t+1}\left(\tfrac{p-1}{p}+ \tau_{t+1}\right), \; i=1,\ldots,N-1, \label{stepsize_stronglyconvex_q2b}\\
\gamma_t\eta_{t}&\ge\gamma_{t+1}\eta_{t+1} \; i=1,\ldots,N-1, \label{stepsize_stronglyconvex_q3a}\\
p\gamma_{t-1}\eta_{t-1}\tau_t&\ge q_{t-1}^2\theta_t\|A\|_2^2, \; i=1,\ldots,N-1, \label{stepsize_stronglyconvex_q3b}\\
\eta_{N-1} \tau_{N-1}&\ge \|A\|_2^2. \label{stepsize_stronglyconvex_q3c}
\end{align}
\begin{itemize}
\item [a)] For any $N \ge 1,$ we have
\beq \label{convergence_stronglyconvex}
\E[Q_0(\hat z^{N},z)] \le  \left(\tsum_{t=1}^{N-1}\gamma_t\right)^{-1} \left [\tfrac{\gamma_1\eta_{1}}{2}
\Omega _X^2+\theta_1\left(\tfrac{p-1}{p}+\tau_1\right)\Omega _Y^2 \right ],
\eeq
where the expectation is taken w.r.t. to $i_{[N]}=(i_1,...,i_{N-1})$.
\item [b)] For any $N \ge 1,$ there exists a function $\delta(y)$ such that $\bbe[\delta(y)] = 0$ for any $y \in Y$ and
\beq \label{convergence_stronglyconvex1}
\E[g_{\delta(y)}(\hat z^N)] \le
 \left( \tsum_{t=1}^{N-1}\gamma_t\right)^{-1} \left [\tfrac{\gamma_{1}\eta_{1}}{2}\Omega _X^2+\theta_1\left(\tfrac{p-1}{p}+\tau_1\right)\Omega _Y^2 \right ].
\eeq
\end{itemize}
\end{theorem}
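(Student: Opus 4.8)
The plan is to follow the same three-stage strategy used to establish Theorem~\ref{theorem1}, now carrying the extra weight $\theta_t$ supplied by Proposition~\ref{proposition2} and exploiting the strong convexity of $J$. First I would combine the recursion \eqnok{recursion33} with the bound on $\E_{i_t}[\tilde\Delta_t]$ from Lemma~\ref{lemma2_delta}, after writing $\tilde\Delta_t = \E_{i_t}[\tilde\Delta_t] + (\tilde\Delta_t - \E_{i_t}[\tilde\Delta_t])$ so that the deterministic part is controlled while a mean-zero fluctuation is carried along. To absorb the $-\tfrac{\theta_t\tau_t}{2}\|y^t - y^{t+1}\|_2^2$ term appearing in \eqnok{recursion33} against the $\tfrac{\theta_t\tau_t}{2}\E_{i_t}[\|y^{t+1} - y^t\|_2^2]$ term produced by Lemma~\ref{lemma2_delta}, I would introduce $\tilde\Delta'_t := \tilde\Delta_t - \tfrac{\theta_t\tau_t}{2}\|y^t - y^{t+1}\|_2^2$, exactly mirroring the role of $\Delta'_t$ in the proof of Theorem~\ref{theorem1}.

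The core of the argument is the telescoping obtained after summing the resulting inequality from $t = 1$ to $N - 1$, where the parameter choices are tailored so that four families of terms collapse. The cross terms $\langle \gamma_t A x^{t+1} - \theta_t A x^t, y^{t+1} - y\rangle$ at step $t$ cancel against the cross term in the Lemma~\ref{lemma2_delta} bound at step $t+1$, precisely because \eqnok{stepsize_stronglyconvex_q1a} forces $\theta_{t} = \tfrac{1}{p}q_{t}\theta_{t+1}$ and \eqnok{stepsize_stronglyconvex_q1b} forces $\gamma_{t} = (\tfrac{1}{p}q_{t} - \tfrac{p-1}{p})\theta_{t+1}$; these same identities yield $\theta_t - \gamma_t = \tfrac{p-1}{p}\theta_{t+1}$, which telescopes the $J$ terms. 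The $x$-residuals $\|x^{t+1} - x^t\|_2^2$ are controlled through \eqnok{stepsize_stronglyconvex_q3a}--\eqnok{stepsize_stronglyconvex_q3b}, leaving only the final residual, dominated by \eqnok{stepsize_stronglyconvex_q3c}.

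The genuinely new ingredient, and the step I expect to be the main obstacle, is the telescoping of the $y$-distances. After combining \eqnok{recursion33} with Lemma~\ref{lemma2_delta}, the coefficient of $\|y - y^t\|_2^2$ at step $t$ is $\tfrac{\theta_t}{2}(\tau_t + \tfrac{p-1}{p})$ — the extra $\tfrac{p-1}{2p}\theta_t$ being exactly the contribution of the strong convexity term via \eqnok{norm_equal2} — while the coefficient of $-\|y - y^{t+1}\|_2^2$ equals $\theta_t\tfrac{1+\tau_t}{2}$. The crucial observation is that the $+1$ inside $1 + \tau_t$ originates from the unit strong convexity modulus of $J$, and it is precisely this surplus that makes condition \eqnok{stepsize_stronglyconvex_q2b}, namely $\theta_t(1 + \tau_t) \ge \theta_{t+1}(\tfrac{p-1}{p} + \tau_{t+1})$, sufficient for consecutive $y$-distance terms to telescope into a nonpositive (hence discardable) sum; this is the mechanism responsible for the improved rate. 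Care is needed at both endpoints: the initialization $x^1 = x^0$ together with $y^1 = \argmax_{y\in Y}\langle Ax^1, y\rangle - J(y)$ annihilates the $t = 1$ boundary contribution (after $x^1 = x^0$ the surviving term is $\tfrac{p-1}{p}\theta_1[\langle Ax^1, y - y^1\rangle - (J(y)-J(y^1))] \le 0$ by optimality of $y^1$), while \eqnok{stepsize_stronglyconvex_q2a} and a Cauchy--Schwarz estimate analogous to \eqnok{Cauchy2} handle the final cross term $-\langle Ax^N - Ax^{N-1}, y^N - y\rangle$.

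Reducing the summed inequality to $(\tsum_{t=1}^{N-1}\gamma_t)\, Q_0(\hat z^N, z) \le \tfrac{\gamma_1\eta_1}{2}\Omega_X^2 + \theta_1(\tfrac{p-1}{p} + \tau_1)\Omega_Y^2 + \tsum_{t=1}^{N-1}(\tilde\Delta'_t - \E_{i_t}[\tilde\Delta'_t])$ — using convexity of $Q_0(\cdot, z)$ and the definition \eqnok{defxbar} of $\hat z^N$ — part a) follows by taking expectations, since $\E_{i_t}[\tilde\Delta'_t - \E_{i_t}[\tilde\Delta'_t]] = 0$. For part b) I would not take the full expectation but instead split the fluctuation as $\tilde\Delta'_t = \tilde\Delta'_{t1} + \tilde\Delta'_{t2}$ into its $y$-independent and $y$-dependent parts (the latter collecting the cross term in $y$, the $J_i(y_i)$ terms, and the strong convexity terms), move $\tsum_t(\tilde\Delta'_{t2} - \E_{i_t}[\tilde\Delta'_{t2}])$ to the left, set $\delta(y) = (\tsum_{t=1}^{N-1}\gamma_t)^{-1}\tsum_{t=1}^{N-1}(\tilde\Delta'_{t2} - \E_{i_t}[\tilde\Delta'_{t2}])$, and conclude by maximizing over $z \in Z$ (legitimate since $Z$ is bounded) and taking expectation, exactly as in the proof of Theorem~\ref{theorem1}b) and the definition \eqnok{def_gdelta} of $g_\delta$.
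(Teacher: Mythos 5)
Your proposal is correct and follows essentially the same route as the paper's proof: the same combination of Proposition~\ref{proposition2} with Lemma~\ref{lemma2_delta}, the same correction term $\tilde\Delta'_t = \tilde\Delta_t - \tfrac{\theta_t\tau_t}{2}\|y^t-y^{t+1}\|_2^2$, the same telescoping driven by \eqnok{stepsize_stronglyconvex_q1a}--\eqnok{stepsize_stronglyconvex_q3c} (including the key role of \eqnok{stepsize_stronglyconvex_q2b} in collapsing the $y$-distance terms and of the initialization and \eqnok{stepsize_stronglyconvex_q2a} at the two endpoints), and the same splitting of $\tilde\Delta'_t$ into $y$-independent and $y$-dependent parts for part b). You even identify correctly the boundary estimate $\tfrac{p-1}{p}\theta_1[\langle Ax^1, y-y^1\rangle - (J(y)-J(y^1))]\le 0$ that the paper invokes implicitly via the optimality of $y^1$, so there is nothing to add.
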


\begin{proof}
We first show part a). It follows from
Proposition~\ref{proposition2} and Lemma~\ref{lemma2_delta} that
\[
\begin{array}{ll}
&\gamma_t Q_0(z^{t+1},z)+\left\langle {\gamma_tAx^{t+1}-\theta_tAx^t, y^{t+1}-y}\right\rangle+ (\theta_t-\gamma_t)\left[ J(y^{t+1})-J(y)\right ] \\
&\le \left\langle{\left( \tfrac{1}{p}q_{t-1}-\tfrac{p-1}{p} \right) \theta_t Ax^t-\tfrac{1}{p}q_{t-1}\theta_tAx^{t-1}, y^{t}-y}\right\rangle
+\tfrac{p-1}{p} \theta_t\left[ J (y^{t})-J(y)\right]
+ \tilde \Delta_t-\E_{i_t}[\tilde \Delta_t]\\
&\quad + \,\tfrac{q_{t-1}^2 \theta_t\|A\|_2^2}{2p\tau_t}\| x^t-x^{t-1}\|_2^2+\tfrac{\theta_t \tau_t}{2}\E_{i_t} \left[\| y^{t+1}-y^t\|_2^2\right]
+\theta_t \left(\tfrac{p-1}{2p}+\tfrac{\tau_t}{2}\right)\| y-y^t\|_2^2\\
&\quad + \, \tfrac{\gamma_t\eta_t}{2}\left[\| x-x^t\|_2^2- \| x^t-x^{t+1}\|_2^2-\| x-x^{t+1}\|_2^2\right]
-\theta_t\left(\tfrac{1+\tau_t}{2 }\right) \| y-y^{t+1}\|_2^2-\tfrac{\theta_t\tau_t}{2 }\| y^t-y^{t+1}\|_2^2 .
\end{array}
\]
Denoting
$$\tilde\Delta'_t =\tilde\Delta_t -\tfrac{\theta_t\tau_t}{2}\| y^t-y^{t+1}\|_2^2,$$
we can rewrite the above inequality as
\beq \label{recursion1_new2_delta}
\begin{array}{ll}
&\gamma_tQ_0(z^{t+1},z)+\left\langle {\gamma_tAx^{t+1}-\theta_tAx^t, y^{t+1}-y}\right\rangle+ (\theta_t-\gamma_t)\left[ J(y^{t+1})-J(y)\right]\\
&\le \left\langle{\left( \tfrac{1}{p}q_{t-1}-\tfrac{p-1}{p} \right) \theta_t Ax^t-\tfrac{1}{p}q_{t-1}\theta_tAx^{t-1}, y^{t}-y}\right\rangle+\tfrac{p-1}{p} \theta_t\left[ J (y^{t})-J(y)\right]\\
&\quad + \, \tfrac{q_{t-1}^2 \theta_t\|A\|_2^2}{2p\tau_t}\| x^t-x^{t-1}\|_2^2+\tfrac{\gamma_t\eta_t}{2}\left[\| x-x^t\|_2^2- \| x^t-x^{t+1}\|_2^2-\| x-x^{t+1}\|_2^2\right]\\
&\quad + \, \theta_t \left(\tfrac{p-1}{2p}+\tfrac{\tau_t}{2}\right)\| y-y^t\|_2^2-\theta_t \tfrac{1+\tau_t}{2 } \| y-y^{t+1}\|_2^2+ \tilde \Delta'_t-\E_{i_t}[\tilde \Delta'_t].
\end{array}
\eeq
Observe that by \eqnok{stepsize_stronglyconvex_q1a}, \eqnok{stepsize_stronglyconvex_q1b}, and \eqnok{stepsize_stronglyconvex_q2a},
and the fact $x^1 = x^0$,
\begin{align*}
\langle {\gamma_tAx^{t+1}-\theta_tAx^t, y^{t+1}-y}\rangle &=
\langle{( \tfrac{1}{p}q_{t}-\tfrac{p-1}{p} ) \theta_{t+1} Ax^{t+1}-\tfrac{1}{p}q_{t}\theta_{t+1} Ax^{t}, y^{t+1}-y}\rangle\\
\langle{( \tfrac{1}{p}q_{0}-\tfrac{p-1}{p} ) \theta_1 Ax^1-\tfrac{1}{p}q_{0}\theta_1Ax^{0}, y^{1}-y}\rangle
&= \langle{( -\tfrac{p-1}{p} ) \theta_1 Ax^1, y^{1}-y}\rangle\\
(\theta_t-\gamma_t)\left[ J(y^{t+1})-J(y)\right] &= \tfrac{p-1}{p} \theta_{t+1}\left[ J (y^{t+1})-J(y)\right]\\
(\theta_{N-1}-\gamma_{N-1})\left[ J(y^{N})-J(y)\right] &= 0.
\end{align*}
Taking summation from $t=1$ to $N-1$ on both sides of \eqnok{recursion1_new2_delta}, using the above observations,
 and
denoting
\beq \label{defB_stronglyconvex}
\begin{array}{lll}
{\cal {\tilde B}}_N(z,z^{[N]}) &:=& \tsum_{t=1}^{N-1} \left[ \tfrac{\gamma_t\eta_t}{2}\| x-x^t\|_2^2-\tfrac{\gamma_t\eta_t}{2}\| x-x^{t+1}\|_2^2\right]\\
&&\quad +\, \tsum_{t=1}^{N-1} \left[ \tfrac{\theta_t}{2} \left(\tfrac{p-1}{p}+\tau_t\right)\| y-y^t\|_2^2-\tfrac{\theta_t}{2} (1+\tau_t)\| y-y^{t+1}\|_2^2\right].
\end{array}
\eeq
we obtain
\beq \label{summation_new_delta}
\begin{array}{lll}
\tsum_{t=1}^{N-1}\gamma_tQ_0(z^{t+1},z) &\le& \tilde{\cal B}_N(z,z^{[N]})  -\left\langle{\gamma_{N-1}Ax^{N}-\theta_{N-1}Ax^{N-1}, y^{N}-y}\right\rangle
-\tfrac{p-1}{p}\theta_1 \left\langle{Ax^1, y^{1}-y}\right\rangle\\
&&\quad +\tfrac{p-1}{p} \theta_1\left[ J (y^{1})-J(y)\right]
- \tsum_{t=1}^{N-2}\left(\tfrac{\gamma_t\eta_t}{2} -\tfrac{q_{t}^2\theta_{t+1}\|A\|_2^2}{2p\tau_{t+1} }\right) \| x^{t+1}-x^{t}\|_2^2 \\
&&\quad -\tfrac{\gamma_{N-1}\eta_{N-1}}{2} \| x^N-x^{N-1}\|_2^2 +\tsum_{t=1}^{N-1}\left(\tilde \Delta'_t - \E_{i_t}[\tilde \Delta'_t]\right)\\
 &\le& \tilde{\cal B}_N(z,z^{[N]})- \gamma_{N-1} \left\langle{Ax^{N}-Ax^{N-1}, y^{N}-y}\right\rangle-\tfrac{\gamma_{N-1}\eta_{N-1}}{2} \| x^N-x^{N-1}\|_2^2\\
&&\quad +\tsum_{t=1}^{N-1}\left(\tilde \Delta'_t - \E_{i_t}[\tilde \Delta'_t]\right),
\end{array}
\eeq
where the second inequality follows from the facts that $\gamma_{N-1}=\theta_{N-1}$ and $y^1=\argmax_{y \in Y} \langle A x^1, y \rangle - J(y)$,
and relation \eqnok{stepsize_stronglyconvex_q3b}.
The above conclusion, in view of the definition of $\hat x^N$ and the convexity of $Q_0(\hat z, z)$ in terms of $\hat z$,
then implies that
$$
\begin{array}{lll}
\left(\tsum_{t=1}^{N-1}\gamma_t\right)Q_0(\hat z^{N},z) &\le&
\tsum_{t=1}^{N-1}\gamma_tQ_0(z^{t+1},z) \le
 \tilde{\cal B}_N(z,z^{[N]})-\tfrac{\gamma_{N-1}\eta_{N-1}}{2} \| x^N-x^{N-1}\|_2^2\\
&&-\gamma_{N-1}\left\langle {Ax^{N}-Ax^{N-1}, y^{N}-y}\right\rangle +\tsum_{t=1}^{N-1}\left(\tilde \Delta'_t - \E_{i_t}[\tilde \Delta'_t] \right).
\end{array}
$$
Now by the Cauchy-Swartz inequality and the reltation that $\gamma_{N-1}=\theta_{N-1}$ in \eqnok{stepsize_stronglyconvex_q2a},
\beq\label{Cauchy2_new}
-\gamma_{N-1}\left\langle {Ax^{N}-Ax^{N-1}, y^{N}-y}\right\rangle \le  \tfrac{\gamma_{N-1}\| A\|^2}{2\tau_{N-1}}\| x^N-x^{N-1}\|_2^2+\tfrac{\theta_{N-1}\tau_{N-1}}{2}\| y^{N}-y\|_2^2.
\eeq
Moreover, by \eqnok{stepsize_stronglyconvex_q2b} and \eqnok{stepsize_stronglyconvex_q3a}, we have
$$
\begin{array}{lll}
{\cal {\tilde B}}_N(z,z^{[N]}) &=& \tfrac{\gamma_1\eta_1}{2}\| x-x^1\|_2^2-\tsum_{t=1}^{N-2} \left( \tfrac{\gamma_t\eta_t}{2}-\tfrac{\gamma_{t+1}\eta_{t+1}}{2}\right)\| x-x^{t+1}\|_2^2 -\tfrac{\gamma_{N-1}\eta_{N-1}}{2}\| x-x^N\|_2^2\\
&& + \, \tfrac{\theta_1}{2} \left(\tfrac{p-1}{p}+\tau_1\right)\| y-y^1\|_2^2
-\tsum_{t=1}^{N-2} \left[\tfrac{\theta_t}{2} \left(1+ \tau_t\right)-\tfrac{\theta_{t+1}}{2} \left(\tfrac{p-1}{p}+\tau_{t+1}\right)\right]\| y-y^{t+1}\|_2^2 \\
&&\;\;\;\;-\tfrac{\theta_{N-1}}{2} \left(1+\tau_{N-1}\right)\| y^{N}-y\|_2^2\\
&\le& \tfrac{\gamma_1\eta_1}{2}\| x-x^1\|_2^2+\tfrac{\theta_1}{2} \left(\tfrac{p-1}{p}+\tau_1\right)\| y-y^1\|^2-\tfrac{\theta_{N-1}\tau_{N-1}}{2}\| y^{N}-y\|_2^2\\
&\le& \tfrac{\gamma_1\eta_{1}}{2}\Omega_X^2+\tfrac{\theta_1}{2} \left(\tfrac{p-1}{p}+\tau_1\right)\Omega_Y^2-\tfrac{\theta_{N-1}\tau_{N-1}}{2}\| y^{N}-y\|_2^2.
\end{array}
$$
Combining the above three relations, and noting that $\eta_{N-1}\tau_{N-1} \ge \|A\|_2^2$ by \eqnok{stepsize_stronglyconvex_q3c}, we obtain
$$
\left(\tsum_{t=1}^{N-1}\gamma_t\right)Q_0(\hat z^{N},z) \le \tfrac{\gamma_1\eta_{1}}{2}\Omega_X^2+\theta_1\left(\tfrac{p-1}{p}+\tfrac{\tau_1}{2}\right)\Omega_Y^2
+ \tsum_{t=1}^{N-1}(\tilde\Delta'_t -\E_{i_t} [\tilde \Delta'_t]).
$$
Taking expectation w.r.t $i_t, t=1,2,...,N-1,$ and noting that $\E_{i_t} [\tilde\Delta'_t -\E_{i_t} [\tilde \Delta'_t]]= 0$ and $\tfrac{p-1}{p} \le 1,$ we obtain the result in part a).
The proof of part b) is similar to that in Theorem~\ref{theorem1} and hence the details are skipped.
\end{proof}

While there are many options to specify the parameters $\eta_t, \tau_t$, $\theta_t$ and $\gamma_t$
of the RPD method such that the assumptions in \eqnok{stepsize_stronglyconvex_q1a}-\eqnok{stepsize_stronglyconvex_q3c}
are satisfied,
below we provide a specific parameter setting which leads to an optimal rate of convergence
for the RPD algorithm in terms of its dependence on $N$.
\begin{corollary} \label{cor2}
Suppose that the initial point of Algorithm~\ref{algRPD} is set to $x^1=x^0$ and $y^1=\argmax_{y \in Y} \langle A x^1, y \rangle - J(y).$
Also assume that $q_t, \gamma_t, \theta_t, \tau_t$ and $\eta_t$ are set to
%
 \begin{align}
q_t&=p\tfrac{t+3p}{t+3p+1}, \; i=1,\ldots,N-1, \label{stepsize_stronglyconvex_q1a_cor}\\
\gamma_t&=\tfrac{t+2p+1}{p}, \; t=1,2,\ldots,N-2 \; \mbox{and} \; \gamma_{N-1}=N+3p-1, \label{stepsize_stronglyconvex_q1b_cor}\\
\theta_t&=t+3p, \; t=1,2,\ldots,N-1, \label{stepsize_stronglyconvex_q2a_cor}\\
\tau_t&=\tfrac{t+p}{2p} \; i=1,\ldots,N-1, \label{stepsize_stronglyconvex_q2b_cor}\\
\eta_t &=\tfrac{2p^3\|A\|_2^2}{t+2p+1}. \label{stepsize_stronglyconvex_q3c_cor}
\end{align}
Then, for any $N \ge 1$ we have
\beq \label{convergence2_stronglyconvex}
\begin{array}{ll}
\E_{[i_N]}[Q_0(\hat z^{N},z)] \le & \tfrac{2}{N(N+p)}\left [p^3 \|A\|_2^2\Omega _X^2+4.5p^2\Omega_Y^2 \right ].
\end{array}
\eeq
Moreover, there exists a function $\delta(y)$ such that $\bbe[\delta(y)] = 0$ for any $y \in Y$ and
\beq \label{convergence_stronglyconvex2}
\E[g_{\delta(y)}(\hat z^N)] \le
\tfrac{2}{N(N+p)} \left [p^3\| A\|_2^2\Omega _X^2+4.5p^2\Omega_Y^2 \right ].
\eeq
\end{corollary}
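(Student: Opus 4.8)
The plan is to treat this as a pure corollary of Theorem~\ref{theorem2}: the stepsizes in \eqnok{stepsize_stronglyconvex_q1a_cor}--\eqnok{stepsize_stronglyconvex_q3c_cor} are reverse-engineered to meet the hypotheses \eqnok{stepsize_stronglyconvex_q1a}--\eqnok{stepsize_stronglyconvex_q3c}, the initialization $x^1=x^0$, $y^1=\argmax_{y\in Y}\langle Ax^1,y\rangle-J(y)$ matches that of the theorem, and the bounds \eqnok{convergence2_stronglyconvex} and \eqnok{convergence_stronglyconvex2} then follow by substituting the resulting constants into \eqnok{convergence_stronglyconvex} and \eqnok{convergence_stronglyconvex1}. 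So the argument splits cleanly into two phases: (i) a verification that the chosen parameters satisfy all of the theorem's assumptions, and (ii) an explicit evaluation of $\tsum_{t=1}^{N-1}\gamma_t$, of $\tfrac{\gamma_1\eta_1}{2}$, and of $\theta_1(\tfrac{p-1}{p}+\tau_1)$.

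For phase (i), I would first dispatch the three defining identities. With $\theta_t=t+3p$ one has $\theta_{t+1}=t+3p+1$, so the choice $q_t=p\tfrac{t+3p}{t+3p+1}$ makes $\tfrac1p q_t\theta_{t+1}=t+3p=\theta_t$, i.e. \eqnok{stepsize_stronglyconvex_q1a} holds with equality; a short computation then gives $\tfrac1p q_t-\tfrac{p-1}{p}=\tfrac{t+2p+1}{p(t+3p+1)}$, so that $(\tfrac1p q_t-\tfrac{p-1}{p})\theta_{t+1}=\tfrac{t+2p+1}{p}=\gamma_t$, which is \eqnok{stepsize_stronglyconvex_q1b}, while $\theta_{N-1}=N+3p-1=\gamma_{N-1}$ gives \eqnok{stepsize_stronglyconvex_q2a}. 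The remaining requirements are inequalities. The monotonicity \eqnok{stepsize_stronglyconvex_q3a} is immediate once one notices that $\gamma_t\eta_t\equiv 2p^2\|A\|_2^2$ is constant in $t$. Conditions \eqnok{stepsize_stronglyconvex_q2b}, \eqnok{stepsize_stronglyconvex_q3b} and \eqnok{stepsize_stronglyconvex_q3c} each reduce, after clearing denominators, to a polynomial inequality in $t$ and $p$ that is checked directly; for instance \eqnok{stepsize_stronglyconvex_q2b} collapses to $(t+3p)^2\ge (t+3p)^2-1$ by an exact cancellation, and the coupling \eqnok{stepsize_stronglyconvex_q3b} together with the endpoint \eqnok{stepsize_stronglyconvex_q3c} reduce similarly to comparisons of quadratics in $t$ built from the ratios $q_{t-1}^2/\theta_t$ and the constant $\gamma_{t-1}\eta_{t-1}$.

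For phase (ii), I would sum the arithmetic progression $\gamma_t=\tfrac{t+2p+1}{p}$ over $t=1,\dots,N-2$ and add $\gamma_{N-1}=N+3p-1$; after lower-bounding this yields $\tsum_{t=1}^{N-1}\gamma_t\ge \tfrac{N(N+p)}{2p}$, hence $(\tsum_{t=1}^{N-1}\gamma_t)^{-1}\le \tfrac{2p}{N(N+p)}$. Evaluating the bracket in \eqnok{convergence_stronglyconvex} gives $\tfrac{\gamma_1\eta_1}{2}=p^2\|A\|_2^2$ and $\theta_1(\tfrac{p-1}{p}+\tau_1)=\tfrac{9p^2-1}{2p}$; substituting and using $9p^2-1\le 9p^2=2(4.5\,p^2)$ produces exactly \eqnok{convergence2_stronglyconvex}, and part~(b) is identical with \eqnok{convergence_stronglyconvex} replaced by \eqnok{convergence_stronglyconvex1}. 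The step I expect to be the main obstacle is the inequality verification in phase~(i): \eqnok{stepsize_stronglyconvex_q2b} and especially \eqnok{stepsize_stronglyconvex_q3b}--\eqnok{stepsize_stronglyconvex_q3c} rest on near-tight cancellations, so the algebra must be carried out exactly rather than by order-of-magnitude estimates, and securing the clean lower bound $\tfrac{N(N+p)}{2p}$ for $\tsum_{t=1}^{N-1}\gamma_t$ is the only other place requiring a little care.
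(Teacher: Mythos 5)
Your plan is structurally identical to the paper's own proof (which consists precisely of the assertion that \eqnok{stepsize_stronglyconvex_q1a_cor}--\eqnok{stepsize_stronglyconvex_q3c_cor} satisfy \eqnok{stepsize_stronglyconvex_q1a}--\eqnok{stepsize_stronglyconvex_q3c}, followed by the three evaluations), and your phase~(ii) matches the paper verbatim: $\tsum_{t=1}^{N-1}\gamma_t\ge\tfrac{N(N+p)}{2p}$, $\tfrac{\gamma_1\eta_1}{2}=p^2\|A\|_2^2$, and $\theta_1\bigl(\tfrac{p-1}{p}+\tau_1\bigr)=\tfrac{(3p+1)(3p-1)}{2p}\le 4.5p$. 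The genuine gap is in phase~(i), at exactly the step you flag as the crux. Carrying out the algebra for \eqnok{stepsize_stronglyconvex_q3b} as you propose: for the relevant indices one has $p\gamma_{t-1}\eta_{t-1}\tau_t=p\cdot 2p^2\|A\|_2^2\cdot\tfrac{t+p}{2p}=p^2(t+p)\|A\|_2^2$ and $q_{t-1}^2\theta_t\|A\|_2^2=p^2\tfrac{(t+3p-1)^2}{t+3p}\|A\|_2^2$, so the hypothesis reads $(t+p)(t+3p)\ge(t+3p-1)^2$, i.e. $2t(1-p)\ge 6p^2-6p+1$, which is \emph{false} for every $p\ge1$ and $t\ge1$ (for $p=1$ it reads $0\ge 1$). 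Your claim for \eqnok{stepsize_stronglyconvex_q3a} is also wrong as stated: $\gamma_t\eta_t=2p^2\|A\|_2^2$ only for $t\le N-2$, whereas $\gamma_{N-1}\eta_{N-1}=\tfrac{p(N+3p-1)}{N+2p}\cdot 2p^2\|A\|_2^2$, which strictly exceeds $2p^2\|A\|_2^2$ whenever $p\ge2$, so the required monotonicity fails at its last index; and \eqnok{stepsize_stronglyconvex_q3c} becomes $\tfrac{p^2(N+p-1)}{N+2p}\ge1$, which fails for $p=1$. So the ``exact'' verification you rest the proof on refutes, rather than confirms, three of the hypotheses.

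In fairness, this defect is inherited from the paper itself: its proof dismisses phase~(i) with ``it is easy to verify,'' and under a literal reading that assertion is false (your checks of \eqnok{stepsize_stronglyconvex_q1a}, \eqnok{stepsize_stronglyconvex_q1b}, \eqnok{stepsize_stronglyconvex_q2a}, \eqnok{stepsize_stronglyconvex_q2b} are the ones that do go through, and they are correct). But the failure is not a removable typo that plugging in can absorb: within the family $\tau_t=\tfrac{t+c}{2p}$, condition \eqnok{stepsize_stronglyconvex_q2b} forces $c\le p+1$ (it holds for $c=p$ with slack exactly $\tfrac1{2p}$), while \eqnok{stepsize_stronglyconvex_q3b} forces $c\ge 3p-2+\tfrac{1}{3p+1}$ and \eqnok{stepsize_stronglyconvex_q3c} forces $p^2(N+c-1)\ge N+2p$; these requirements are jointly infeasible for every $p\ge1$. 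Repairing the corollary therefore requires enlarging $\eta_t$ (and adjusting $\tau_t$, plus a separate choice of $\eta_{N-1}$) and carrying the resulting changes through \eqnok{convergence_stronglyconvex}, which alters the constants in \eqnok{convergence2_stronglyconvex}; alternatively one must redo the summation in Theorem~\ref{theorem2}'s proof keeping the positive deficit terms $\bigl(\tfrac{q_t^2\theta_{t+1}\|A\|_2^2}{2p\tau_{t+1}}-\tfrac{\gamma_t\eta_t}{2}\bigr)\|x^{t+1}-x^t\|_2^2$ rather than dropping them. Either way, the result does not follow by the verify-and-substitute argument you (and the paper) outline.
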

\begin{proof}
It is easy to verify that $q_t, \theta_t, \gamma_t, \tau_t$ and $\eta_t$ defined in \eqnok{stepsize_stronglyconvex_q1a_cor}-\eqnok{stepsize_stronglyconvex_q3c_cor}  satisfy \eqnok{stepsize_stronglyconvex_q1a}-\eqnok{stepsize_stronglyconvex_q3c}.
Moreover, it follows from \eqnok{stepsize_stronglyconvex_q1a_cor}-\eqnok{stepsize_stronglyconvex_q3c_cor} that
$$\tsum_{t=1}^{N-1}\gamma_t=N+3p-1+\tsum_{t=1}^{N-2}\tfrac{t+2p+1}{p} \ge \tfrac{N(N+p)}{2p},$$
$$\tfrac{\gamma_1\eta_{1}}{2}=p^2\|A\|_2^2 \;\;\mbox{and} \;\;\theta_1\left(\tfrac{p-1}{p}+\tau_{1}\right)=\tfrac{(1+3p)(3p-1)}{2p}.$$
The results then follow by plugging these identities into \eqnok{convergence_stronglyconvex} and \eqnok{convergence_stronglyconvex1} and using the fact that $\tfrac{(1+3p)(3p-1)}{2p} \le 4.5p.$
\end{proof}

\vgap

We now make some remarks about the convergence results obtained in Theorem~\ref{theorem2} and Corollary~\ref{cor2}. Observe that, in view of \eqnok{convergence2_stronglyconvex}, the total number of iterations required by the RPD algorithm to find an $\epsilon$-solution of
smooth bilinear saddle point problems, i.e.,
a point $\hat z \in Z$ such that $\E[Q_0(\hat z, z)] \le \epsilon$ for any $z \in Z$, can be bounded by
\[
\max \left\{\tfrac{\sqrt{2}p^{\tfrac{3}{2}}\|A\|_2\Omega_X}{\sqrt{\epsilon}},\tfrac{3p\Omega_Y}{\sqrt{\epsilon}}\right\}.
\]
Similar to the previous results for general bilinear saddle point problems, this bound is not improvable in terms of its dependence on $\epsilon$ for a given $p.$

\section{Generalization of the randomized primal-dual method}
In this section, we discuss two possible ways to generalize the RPD method. One is to
extend it for solving unbounded saddle point problems and the other is to incorporate non-Euclidean distances.

\subsection{RPD for unbounded saddle point problems} \label{unbounded_sec}

In this subsection, we assume that either the primal feasible set $X$ or dual feasible set $Y$ is unbounded.
To assess the quality of a feasible solution $\hat z \in X \times Y,$ we use the perturbation-based criterion defined in \eqnok{def_tgdelta}.
Throughout this subsection we assume that both $h$ and $J$ are general convex function (without assuming strong convexity)
so that problems \eqnok{eqn_update_y} and \eqnok{eqn_update_x} are relatively easy to solve. Our goal is to show that the RPD algorithm,
when equipped with properly specified algorithmic parameters, exhibits an ${\cal O}(1/N)$ rate of convergence
for solving this class of unbounded saddle point problems.

Before establishing the main convergence properties for the RPD algorithm applied to unbounded bilinear saddle point problems,
we first show an important property of the RPD method which states that, for every $t \le N,$ the expected distance from $z^t$ to
a given saddle point $z^*$ is bounded.
\begin{lemma}
Let $z^t=(x^t,y^t), t=1,2,...,N$, be generated by the Algorithm~\ref{algRPD} with $x^1=x^0$ and $y^1=\argmax_{y \in Y} \langle A x^1, y \rangle - J(y).$
Also assume that $q_t, \tau_t, \eta_t$ and $\gamma_t$ are set to \eqnok{stepsize_q1a}-\eqnok{stepsize_q3b}.
If
\beq \label{stepsize_unbounded}
\begin{array}{lll}
\tau_{t-1}&=&\tau_{t}, \; i=1,..,N-1,\\
\gamma_{t-1}\eta_{t-1}&=& \gamma_{t}\eta_{t}, \; i=1,..,N-1,\\
\gamma_{t-1}&=& \gamma_{t}, \; i=1,..,N-2,\\
\end{array}
\eeq
then we have
\begin{align}
  \E_{[i_t]}\left[\| x^*-x^{t}\|_2^2 \right] &\le 2D^2, \;\; \forall t \le N-1, \label{bound1}\\
  \E_{[i_t]}\left[\| y^*-y^{t}\|_2^2 \right] &\le \tfrac{2(2-\gamma_{t-1})\eta_{t-1}}{\tau_{t-1}}D^2, \;\; \forall t \le N-1, \label{bound3}
\end{align}
and
\begin{align}
  \E_{[i_N]}\left[\| x^*-x^{N}\|_2^2 \right] &\le D^2, \label{bound2}\\
  \E_{[i_N]}\left[\| y^*-y^{N}\|_2^2 \right] &\le \tfrac{\eta_{N-1}\gamma_{N-1}}{\tau_{N-1}}D^2, \label{bound4}
\end{align}
where $[i_t]=\{ i_1,...,i_{t-1}\}$,
\beq \label{def_D}
D:=\sqrt{\| x^*-x^1\|_2^2+\tfrac{\tau_{1}}{\eta_{1}\gamma_{1}}\| y^*-y^1\|_2^2},
\eeq
and $z^*=(x^*,y^*)$ is a saddle point of problem~\eqnok{spp}.
\end{lemma}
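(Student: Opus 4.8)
The engine of the proof will be the elementary but decisive fact that, since $z^*=(x^*,y^*)$ is a saddle point, the gap evaluated against it is nonnegative: $Q_0(z^{t+1},z^*)\ge 0$ for every $t$. Indeed, with $L(x,y):=h(x)+\langle Ax,y\rangle-J(y)$ one has $Q_0(z^{t+1},z^*)=L(x^{t+1},y^*)-L(x^*,y^{t+1})\ge L(x^*,y^*)-L(x^*,y^*)=0$ by the two saddle inequalities. The plan is therefore to run exactly the telescoping that underlies Theorem~\ref{theorem1}, but with the test point frozen at $z=z^*$, and to convert the resulting inequality into a statement about the distances $\|x^*-x^t\|_2^2$ and $\|y^*-y^t\|_2^2$ rather than about $Q_0(\hat z^N,z)$. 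Throughout, write $\gamma\eta$ and $\tau$ for the common values of $\gamma_t\eta_t$ and $\tau_t$ guaranteed by \eqnok{stepsize_unbounded}.

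First I would combine Proposition~\ref{proposition1} and Lemma~\ref{lemma1} to reach the per-step recursion \eqnok{recursion1_new2} with $z=z^*$. The role of the constant stepsizes \eqnok{stepsize_unbounded} (together with $q_t=p$) is that the coefficient $\tfrac1p q_{t-1}-\tfrac{p-1}{p}$ collapses to $\tfrac1p=\gamma_t$, so the residual bilinear term at step $t$ coincides with the outgoing bilinear term of step $t-1$, and likewise the partial objective terms $\tfrac{p-1}{p}[J(y^t)-J(y^*)]$ telescope; since $\gamma_t\eta_t$ and $\tau_t$ are constant, the quadratics telescope with the fixed coefficients $\tfrac{\gamma\eta}{2}$ and $\tfrac{\tau}{2}$. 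Summing from $t=1$ to a general top index $s\le N-1$ and using $Q_0(z^{t+1},z^*)\ge0$, the left side retains only the energy $\tfrac{\gamma\eta}{2}\|x^*-x^{s+1}\|_2^2+\tfrac{\tau}{2}\|y^*-y^{s+1}\|_2^2$, the surviving top bilinear and partial-$J$ terms, and a nonnegative descent term $\tfrac{\gamma\eta}{2}\|x^s-x^{s+1}\|_2^2$; the interior increment terms are nonpositive by \eqnok{stepsize_q3a} and are dropped. The initialization $x^1=x^0$ kills the bottom increment, and $y^1=\argmax_y\langle Ax^1,y\rangle-J(y)$ forces the bottom bilinear-plus-$J$ term to be nonpositive, so the whole bottom boundary is discarded. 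Finally, summing and taking expectation makes the fluctuations $\tsum_{t}(\Delta'_t-\E_{i_t}[\Delta'_t])$ vanish, leaving a bound with right-hand constant $\tfrac{\gamma_1\eta_1}{2}D^2$ in view of the definition \eqnok{def_D} of $D$.

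It then remains to bound the energy from below, and here the two regimes diverge. For the final iterate ($s=N-1$) the choice $\gamma_{N-1}=1$ annihilates the top partial-$J$ term, leaving only $\langle Ax^N-Ax^{N-1},y^N-y^*\rangle$; a single weighted Young's inequality, absorbing the $x$-increment into the full descent term $\tfrac{\gamma\eta}{2}\|x^N-x^{N-1}\|_2^2$ using the slack $\gamma\eta\tau\ge p\|A\|_2^2$ from \eqnok{stepsize_q3a}, yields a combined bound on $\tfrac{\gamma\eta}{2}\E\|x^*-x^N\|_2^2+c\,\tfrac{\tau}{2}\E\|y^*-y^N\|_2^2$ with $c>0$, from which \eqnok{bound2} and \eqnok{bound4} follow by discarding one nonnegative term. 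For an intermediate iterate ($s\le N-2$, $\gamma_s=\tfrac1p\neq1$) the partial-$J$ term $\tfrac{p-1}{p}[J(y^{s+1})-J(y^*)]$ survives; invoking the dual optimality $J(y^{s+1})-J(y^*)\ge\langle Ax^*,y^{s+1}-y^*\rangle$ turns it into a bilinear quantity, and the two bilinear terms then combine to $\langle\tfrac1p A(x^{s+1}-x^*)-A(x^s-x^*),y^{s+1}-y^*\rangle$. Splitting $x^s-x^*=(x^s-x^{s+1})+(x^{s+1}-x^*)$, the increment $x^s-x^{s+1}$ is absorbed into $\tfrac{\gamma\eta}{2}\|x^s-x^{s+1}\|_2^2$, while the remaining inner product in $x^{s+1}-x^*$ and $y^{s+1}-y^*$ is split by one weighted Young inequality whose weight is tuned so that the coefficients of $\|x^*-x^{s+1}\|_2^2$ and $\|y^*-y^{s+1}\|_2^2$ stay positive; feeding $\E[\text{energy}_{s+1}]\le\tfrac{\gamma_1\eta_1}{2}D^2$ into the lower bound then gives \eqnok{bound1} and \eqnok{bound3}.

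The hard part will be precisely the control of these surviving ``top-of-the-sum'' coupling terms. In Theorem~\ref{theorem1} the boundedness of $Z$ let us dominate $\|y^N-y\|_2^2$ by $\Omega_Y^2$ and simply discard such terms; here they must instead be folded back into the very energy we are trying to bound, and the delicate point is to choose the Young weights so that both quadratic coefficients remain strictly positive. This is exactly what the factor-$p$ slack $\gamma\eta\tau\ge p\|A\|_2^2$ afforded by \eqnok{stepsize_q3a}—rather than the bare $\gamma\eta\tau\ge\|A\|_2^2$ of \eqnok{stepsize_q3b}—buys us, and it is what accounts for the extra constants $2$ and $2-\gamma_{t-1}$ appearing in \eqnok{bound1}--\eqnok{bound4}.
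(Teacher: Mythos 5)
Your proposal is correct in substance and shares the paper's foundations: the telescoped recursion \eqnok{recursion1_new2} evaluated at $z=z^*$, the nonnegativity $Q_0(z^{t+1},z^*)\ge 0$, the cancellation of the boundary terms via $x^1=x^0$ and $y^1=\argmax_{y\in Y}\langle Ax^1,y\rangle-J(y)$, and full expectation killing the fluctuations $\Delta'_t-\E_{i_t}[\Delta'_t]$; your treatment of the final iterate (bounds \eqnok{bound2} and \eqnok{bound4}) also matches the paper's. But on the crux --- the intermediate iterates --- you take a genuinely different route. The paper never invokes the dual optimality of $y^*$; instead it applies the optimality condition of the primal subproblem \eqnok{eqn_update_x} a second time to convert the surviving terms $\langle\gamma_{t-1}Ax^t-Ax^{t-1},y^t-y^*\rangle+(1-\gamma_{t-1})[J(y^t)-J(y^*)]$ into $(1-\gamma_{t-1})Q_0(z^t,z^*)\ge 0$ (effectively upgrading the weight of the last gap term from $\gamma_{t-1}=1/p$ to $1$), at the price of extra quadratics in $\|x^*-x^{t-1}\|_2^2$; this yields a recursion coupling $\E\|x^*-x^t\|_2^2$ to $\E\|x^*-x^{t-1}\|_2^2$, which is unrolled as a geometric series with ratio $a/(1+a)$, $a=(1-\gamma_{t-1})/\gamma_{t-1}$, to get \eqnok{bound1}, and then back-substituted to get \eqnok{bound3}. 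Your alternative --- lower-bounding $(1-\gamma_s)[J(y^{s+1})-J(y^*)]$ by $(1-\gamma_s)\langle Ax^*,y^{s+1}-y^*\rangle$ via the saddle-point inequality at $y^*$, merging the bilinear terms into $\langle\tfrac1p A(x^{s+1}-x^*)-A(x^s-x^*),y^{s+1}-y^*\rangle$, and closing with two tuned Young inequalities --- is a legitimate one-shot argument that avoids the recursion entirely. Tracking the weights, it recovers \eqnok{bound1} exactly and proves \eqnok{bound3} with a constant of order $\gamma_{t-1}\eta_{t-1}/\tau_{t-1}$ \emph{independent of} $p$ (roughly $5\gamma_{t-1}\eta_{t-1}/\tau_{t-1}$), which is stronger than the stated $2(2-\gamma_{t-1})\eta_{t-1}/\tau_{t-1}=\Theta(p)\,\gamma_{t-1}\eta_{t-1}/\tau_{t-1}$ for $p\ge 2$; so the lemma's claims follow, though your remark that the Young tuning ``accounts for'' the constants $2$ and $2-\gamma_{t-1}$ is not quite right --- those particular constants are artifacts of the paper's recursion, not of your estimate.

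One shared caveat on \eqnok{bound4}: at the last step, absorbing $-\langle A(x^N-x^{N-1}),y^N-y^*\rangle$ into $\tfrac{\gamma_{N-1}\eta_{N-1}}{2}\|x^N-x^{N-1}\|_2^2$ by Young necessarily consumes part of the $-\tfrac{\tau_{N-1}}{2}\|y^N-y^*\|_2^2$ term, so what this argument actually delivers is \eqnok{bound4} with an extra factor $p/(p-1)\le 2$. Your explicit ``$c>0$'' concedes this honestly; the paper's own proof silently drops the two sign-indefinite terms when passing from \eqnok{summation} to its displayed inequality, so it suffers the same constant slippage. The discrepancy is harmless downstream, merely inflating the absolute constants in Theorem~\ref{unbouded_theorem}.
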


\begin{proof}
We first prove \eqnok{bound2} and \eqnok{bound4}. Using \eqnok{summation} (with $z = z^*$), \eqnok{stepsize_unbounded},
and the fact that $x^1=x^0$ and $y^1=\argmax_{y \in Y} \langle A x^1, y \rangle - J(y),$ we obtain
$$
\begin{array}{lll}
\tsum_{t=1}^{N-1}\gamma_tQ_0(z^{t+1},z^*) &\le& \tfrac{\gamma_1\eta_1}{2}\| x^1-x^*\|_2^2-\tfrac{\gamma_{N-1}\eta_{N-1}}{2}\| x^N-x^*\|_2^2+\tfrac{\tau_1}{2}\| y^1-y^*\|_2^2-\tfrac{\tau_{N-1}}{2}\| y^N-y^*\|_2^2\\
&& \,+\tsum_{t=1}^{N-1}\left( \Delta'_t-\E_{i_t}[\Delta'_t]\right).
\end{array}
$$
Taking expectation on both sides of the above inequality w.r.t $[i_N],$ noting that $Q_0(z^{t+1},z^*) \ge 0, \forall t \ge 1$ and $\E_{i_t}[\Delta'_t-\E_{i_t}[\Delta'_t]]=0,$ then we obtain \eqnok{bound2} and \eqnok{bound4}.

Now let us show \eqnok{bound1} and \eqnok{bound3}. Using similar analysis to \eqnok{recursion1_new2},
we can show that for any $j \le t$ and any $t \le N-1,$
$$\begin{array}{ll}
&\gamma_jQ_0(z^{j+1},z)+\left\langle {\gamma_jAx^{j+1}-Ax^j, y^{j+1}-y}\right\rangle+ (1-\gamma_j)\left[ J(y^{j+1})-J(y)\right] \\
& \le\left\langle{\left( \tfrac{1}{p}q_{j-1}-\tfrac{p-1}{p} \right) Ax^j-\tfrac{1}{p}q_{j-1}Ax^{j-1}, y^{j}-y}\right\rangle+\tfrac{p-1}{p} \left[ J (y^{j})-J(y)\right]-\tfrac{\gamma_t\eta_t}{2}\| x^j-x^{j+1}\|_2^2\\
&+\tfrac{q_{j-1}^2 \|A\|_2^2}{2p\tau_j}\| x^j-x^{j-1}\|_2^2+\tfrac{\gamma_j\eta_j}{2}\left[\| x-x^j\|_2^2-\| x-x^{j+1}\|_2^2\right]+\tfrac{\tau_j}{2}\left[\| y-y^j\|_2^2-\| y-y^{j+1}\|_2^2\right]+\Delta'_j-\E_{i_j}[\Delta'_j].
\end{array}
$$
Taking summation on both sides of the above inequality from $j=1$ to $t-1$ and using the facts that $x^1=x^0, y^1=\argmax_{y \in Y} \langle A x^1, y \rangle - J(y)$ and $\gamma_j=\tfrac{1}{p}, \forall j=1,2,...,t-1,$ we have
\beq \label{summation_new}
\begin{array}{ll}
&\tsum_{t=j}^{t-1}\gamma_jQ_0(z^{j+1},z) + (1-\gamma_{t-1})\left[ J(y^{t})-J(y)\right]\\
& \quad \le {\cal B}_t(z,z^{[t]})-\tfrac{\gamma_{t-1}\eta_{t-1}}{2} \| x^t-x^{t-1}\|_2^2-\left\langle {\gamma_{t-1}Ax^{t}-Ax^{t-1}, y^{t}-y}\right\rangle+\tsum_{j=1}^{t-1}\left(\Delta'_j-\E_{i_j}[\Delta'_j]\right).
\end{array}
\eeq
Observe that
\beq \label{observation_u2}
\begin{array}{ll}
-\left\langle {\gamma_{t-1}Ax^{t}-Ax^{t-1}, y^{t}-y}\right\rangle &= -\left\langle {Ax^{t}-Ax^{t-1}, y^{t}-y}\right\rangle+\left\langle {(1-\gamma_{t-1})Ax^{t}, y^{t}-y}\right\rangle\\
&= (1-\gamma_{t-1}) [\left\langle {Ax, y^{t}}\right\rangle-\left\langle {Ax^{t}, y}\right\rangle+\left\langle {Ax^{t}, y^{t}}\right\rangle-\left\langle {Ax, y^{t}}\right\rangle]\\
&\;\;\;\;-\left\langle {Ax^{t}-Ax^{t-1}, y^{t}-y}\right\rangle,
\end{array}
\eeq
which, in view of the fact that by the optimality condition of problem~\eqnok{eqn_update_x},
$$\left\langle {Ax^{t}-Ax, y^{t}}\right\rangle \le \tfrac{\eta_{t-1}}{2}[\| x-x^{t-1}\|_2^2-\|x-x^{t}\|_2^2-\|x^{t}-x^{t-1}\|_2^2]-[h(x^t)-h(x)],$$
then implies that
$$
\begin{array}{ll}
-\left\langle {\gamma_{t-1}Ax^{t}-Ax^{t-1}, y^{t}-y}\right\rangle
\le & -\left\langle {Ax^{t}-Ax^{t-1}, y^{t}-y}\right\rangle+(1-\gamma_{t-1}) [\left\langle {Ax, y^{t}}\right\rangle-\left\langle {Ax^{t}, y}\right\rangle-h(x^t)+h(x)]\\
&+\tfrac{\eta_{t-1}(1-\gamma_{t-1})}{2}[\| x-x^{t-1}\|_2^2-\|x-x^{t}\|_2^2-\|x^{t}-x^{t-1}\|_2^2].
\end{array}
$$
By the above inequality and \eqnok{summation_new}, we have
\[
\begin{array}{ll}
&\tsum_{t=j}^{t-1}\gamma_jQ_0(z^{j+1},z) + (1-\gamma_{t-1}) \left[\left\langle {Ax^{t}, y}\right\rangle-\left\langle {Ax, y^{t}}\right\rangle+h(x^t)-h(x)+ J(y^{t})-J(y)\right]\\
\le & {\cal B}_t(z,z^{[t]})-\tfrac{\gamma_{t-1}\eta_{t-1}}{2} \| x^t-x^{t-1}\|_2^2-\left\langle {Ax^{t}-Ax^{t-1}, y^{t}-y}\right\rangle\\
&+\tfrac{\eta_{t-1}(1-\gamma_{t-1})}{2}[\| x-x^{t-1}\|_2^2-\|x-x^{t}\|_2^2-\|x^{t}-x^{t-1}\|_2^2]+\tsum_{j=1}^{t-1}\left(\Delta'_j-\E_{i_j}[\Delta'_j]\right).
\end{array}
\]
Using the previous relation, the fact that
$$
\begin{array}{ll}
-\left\langle {Ax^{t}-Ax^{t-1}, y^{t}-y}\right\rangle & \le\| A\| \| x^{t}-x^{t-1}\|_2 \| y^{t}-y\|_2 \\
 &\le \tfrac{\|A\|_2^2}{\tau_{t-1}}\| x^t-x^{t-1}\|_2^2+\tfrac{\tau_{t-1}}{4}\| y^{t}-y\|_2^2.
\end{array}
$$
and the definition of $Q_0$ (with $z = z^*$), we conclude that
$$
\begin{array}{ll}
&\tsum_{j=1}^{t-1}\tilde\gamma_j Q_0(z^{j+1},z^*)\\
\le &\tfrac{\gamma_1\eta_1}{2}\| x^*-x^1\|_2^2-\left(\tfrac{\gamma_{t-1}\eta_{t-1}}{2}+ \tfrac{(1-\gamma_{t-1})\eta_{t-1}}{2}\right) \| x^*-x^{t}\|_2^2 \\
&-\left(\tfrac{\gamma_{t-1}\eta_{t-1}}{2}+ \tfrac{(1-\gamma_{t-1})\eta_{t-1}}{2}-\tfrac{\|A\|_2^2}{\tau_{t-1}}\right) \| x^t-x^{t-1}\|_2^2 +\tfrac{\tau_1}{2}\| y^*-y^1\|_2^2-\left(\tfrac{\tau_{t-1}}{2}- \tfrac{\tau_{t-1}}{4}\right) \| y^*-y^{t}\|_2^2\\
&+ \tfrac{(1-\gamma_{t-1})\eta_{t-1}}{2}\| \hat x-x^{t-1}\|_2^2+\tsum_{j=1}^{t-1}\left( \Delta'_j-E_{i_j}[\Delta'_j]\right),
\end{array}
$$
where $\tilde \gamma_j=\gamma_j, j=1,...,t-2$ and $\tilde \gamma_{t-1}=1.$

Now by the definition of gap function, we know that $Q_0(z^{j+1},z^*) \ge 0 \; \forall j \ge 1.$ Taking expectation
on both sides of the above inequality w.r.t $[i_t],$ noting that $\E_{i_j}[\Delta'_j-\E_{i_j}[\Delta'_j]]=0,$ $\gamma_1\eta_1=\gamma_{t-1}\eta_{t-1},$ $\tau_1=\tau_{t-1}$ and $\tfrac{\gamma_{t-1}\eta_{t-1}}{2}+ \tfrac{(1-\gamma_{t-1})\eta_{t-1}}{2}-\tfrac{\|A\|_2^2}{\tau_{t-1}} \ge 0$, we have
$$
 \begin{array}{ll}
&\left(\tfrac{\gamma_{t-1}\eta_{t-1}}{2}+ \tfrac{(1-\gamma_{t-1})\eta_{t-1}}{2}\right) \E_{[i_t]}\left[\| x^*-x^{t}\|_2^2 \right] + \tfrac{\tau_{t-1}}{4} \E_{[i_t]}\left[\| y^*-y^{t}\|_2^2 \right]\\
 \le & \tfrac{\gamma_{t-1}\eta_{t-1}}{2}\| x^*-x^1\|_2^2+\tfrac{\tau_{t-1}}{2}\| y^*-y^1\|_2^2+\tfrac{(1-\gamma_{t-1})\eta_{t-1}}{2}\E_{[i_{t-1}]}\left[\| x^*-x^{t-1}\|_2^2 \right].
\end{array}
$$
Dividing both sides of the above relation by $\gamma_{t-1}\eta_{t-1}/2$, we obtain
 \beq \label{summation4}
 \begin{array}{ll}
&\left(1+ \tfrac{1-\gamma_{t-1}}{\gamma_{t-1}}\right) \E_{[i_t]}\left[\| x^*-x^{t}\|_2^2 \right] + \tfrac{\tau_{t-1}}{2\eta_{t-1}\gamma_{t-1}} \E_{[i_t]}\left[\| y^*-y^{t}\|_2^2 \right]\\
 \le & \| x^*-x^1\|_2^2+\tfrac{\tau_{t-1}}{\eta_{t-1}\gamma_{t-1}}\|  y^*-y^1\|_2^2+\tfrac{1-\gamma_{t-1}}{\gamma_{t-1}}\E_{[i_{t-1}]}\left[\| x^*-x^{t-1}\|_2^2 \right],
\end{array}
\eeq
which implies that
$$\begin{array}{ll}
\left(1+ \tfrac{1-\gamma_{t-1}}{\gamma_{t-1}}\right) \E_{[i_t]}\left[\| x^*-x^{t}\|_2^2 \right] \le \| x^*-x^1\|_2^2+\tfrac{\tau_{t-1}}{\eta_{t-1}\gamma_{t-1}}\| y^*-y^1\|_2^2+\tfrac{1-\gamma_{t-1}}{\gamma_{t-1}}\E_{[i_{t-1}]}\left[\| x^*-x^{t-1}\|_2^2 \right].
\end{array}
$$
For simplicity, let us denote
$$a=\tfrac{1-\gamma_{i-1}}{\gamma_{i-1}}, i=1,...,t.$$
Then, using the previous inequality, and the definition of $D$, we have for any $t \le N,$
$$
 \begin{array}{ll}
\E_{[i_t]}\left[\| x^*-x^{t}\|_2^2 \right] &\le \tfrac{D^2}{1+a}+\tfrac{a}{1+a}\E_{[i_{t-1}]}\left[\| x^*-x^{t-1}\|_2^2 \right]\\
& \le \tfrac{D^2}{1+a}+\tfrac{a}{1+a}\left( \tfrac{D^2}{1+a}+\tfrac{a}{1+a}\E_{[i_{t-2}]}\left[\| x^*-x^{t-2}\|_2^2 \right]\right)\\
& = \tfrac{D^2}{1+a} \left( 1+\tfrac{a}{1+a}\right)+\tfrac{a^2}{(1+a)^2}\E_{[i_{t-2}]}\left[\| x^*-x^{t-2}\|_2^2 \right]\\
& \le ... \\
& \le \tfrac{D^2}{1+a} \left( 1+\tfrac{a}{1+a}+...+\tfrac{a^{t-2}}{(1+a)^{t-2}}\right)+\tfrac{a^{t-1}}{(1+a)^{t-1}}\| x^*-x^{1}\|_2^2\\
& = \tfrac{D^2}{1+a} \tfrac{1-(\tfrac{a}{1+a})^{t-1}}{1-\tfrac{a}{1+a}}+\tfrac{a^{t-1}}{(1+a)^{t-1}}\| x^*-x^{1}\|_2^2  \le 2D^2.
\end{array}
$$
Plugging the above inequality to \eqnok{summation4}, for any $t \le N-1,$ we obtain
$$
\tfrac{\tau_{t-1}}{2\eta_{t-1}\gamma_{t-1}} \E_{[i_t]}\left[\| y^*-y^{t}\|_2^2 \right] \le D^2+\tfrac{1-\gamma_{t-1}}{\gamma_{t-1}}2D^2,
$$
which implies \eqnok{bound3}.
\end{proof}

\vgap

The following result provide an important bound on $\E_{[i_N]} \left[Q_0(\hat z^{N},z)\right]$ for the unbounded saddle point problems.
\begin{lemma}
Let $z^t=(x^t,y^t), t=1,2,...,N$ be generated by the Algorithm~\ref{algRPD} with $x^1=x^0$ and $y^1=\argmax_{y \in Y} \langle A x^1, y \rangle - J(y).$ Also assume that $q_t, \tau_t, \eta_t$ and $\gamma_t$ are set to \eqnok{stepsize_q1a}-\eqnok{stepsize_q3b} and \eqnok{stepsize_unbounded}. The there exists a vector $v_N$ such that
\beq \label{eq7}
\E_{[i_N]} \left[Q_0(\hat z^{N},z) + \left\langle {v_N,\hat z^N -z} \right\rangle \right]
\le \left( \tsum_{t=1}^{N-1}\gamma_t\right)^{-1} \left[ \tfrac{\gamma_{N-1}\eta_{N-1}}{2}\E_{[i_N]}  \left[\| \hat x^N-x^1\|_2^2\right]+\tfrac{\tau_{N-1}}{2}\E_{[i_N]}  \left[\| \hat y^N-y^1\|_2^2\right] \right].
\eeq
\end{lemma}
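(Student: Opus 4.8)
The plan is to rerun the telescoping that produced \eqnok{summation} in the proof of Theorem~\ref{theorem1}, but---because $Z$ may now be unbounded---to avoid replacing the residual distances by the diameters $\Omega_X,\Omega_Y$; instead the $z$-affine part of the bound will be absorbed into a perturbation vector $v_N$. First I would combine \eqnok{summation} with the convexity of $Q_0(\cdot,z)$ to obtain, pathwise,
\beqas
\left(\tsum_{t=1}^{N-1}\gamma_t\right)Q_0(\hat z^N,z)&\le&{\cal B}_N(z,z^{[N]})-\tfrac{\gamma_{N-1}\eta_{N-1}}{2}\|x^N-x^{N-1}\|_2^2\\
&&-\left\langle Ax^N-Ax^{N-1},y^N-y\right\rangle+\tsum_{t=1}^{N-1}\left(\Delta'_t-\E_{i_t}[\Delta'_t]\right).
\eeqas
Under the constant stepsizes \eqnok{stepsize_unbounded} the quantities $\gamma_t\eta_t$ and $\tau_t$ do not depend on $t$, so ${\cal B}_N(z,z^{[N]})$ telescopes to $\tfrac{\gamma_{N-1}\eta_{N-1}}{2}\bigl(\|x-x^1\|_2^2-\|x-x^N\|_2^2\bigr)+\tfrac{\tau_{N-1}}{2}\bigl(\|y-y^1\|_2^2-\|y-y^N\|_2^2\bigr)$, which is \emph{affine} in $z$ since the quadratic-in-$z$ pieces cancel.

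Next I would define the (random) perturbation
\[
v_N=\left(\tsum_{t=1}^{N-1}\gamma_t\right)^{-1}\Bigl(\gamma_{N-1}\eta_{N-1}(x^N-x^1),\ \tau_{N-1}(y^N-y^1)+A(x^N-x^{N-1})\Bigr),
\]
which is exactly the aggregate of the coefficients multiplying $z$ on the right-hand side, so that adding $\langle v_N,\hat z^N-z\rangle$ to both sides annihilates every $z$-linear term. Completing the square in $x$ and in $y$ then converts the surviving constants into $\tfrac{\gamma_{N-1}\eta_{N-1}}{2}\|\hat x^N-x^1\|_2^2-\tfrac{\gamma_{N-1}\eta_{N-1}}{2}\|\hat x^N-x^N\|_2^2$ and $\tfrac{\tau_{N-1}}{2}\|\hat y^N-y^1\|_2^2-\tfrac{\tau_{N-1}}{2}\|\hat y^N-y^N\|_2^2$, plus a single leftover cross term $\langle Ax^N-Ax^{N-1},\hat y^N-y^N\rangle$.

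It then remains only to absorb this cross term and the spurious negative squares. I would bound $\langle Ax^N-Ax^{N-1},\hat y^N-y^N\rangle\le\tfrac{\|A\|_2^2}{2\tau_{N-1}}\|x^N-x^{N-1}\|_2^2+\tfrac{\tau_{N-1}}{2}\|\hat y^N-y^N\|_2^2$ by Young's inequality, exactly as in \eqnok{Cauchy2}; the $\|\hat y^N-y^N\|_2^2$ cancels the one from the $y$-completion, the coefficient of $\|x^N-x^{N-1}\|_2^2$ becomes $\tfrac{\|A\|_2^2}{2\tau_{N-1}}-\tfrac{\gamma_{N-1}\eta_{N-1}}{2}\le0$ by \eqnok{stepsize_q3b}, and the remaining $-\tfrac{\gamma_{N-1}\eta_{N-1}}{2}\|\hat x^N-x^N\|_2^2\le0$ is discarded. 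This leaves a pathwise estimate in which the only term still carrying the randomness is $\bigl(\tsum_{t=1}^{N-1}\gamma_t\bigr)^{-1}\tsum_{t=1}^{N-1}(\Delta'_t-\E_{i_t}[\Delta'_t])$; taking $\E_{[i_N]}$ at the fixed $z$ and using $\E_{i_t}[\Delta'_t-\E_{i_t}[\Delta'_t]]=0$ yields \eqnok{eq7}.

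The step I expect to be the real obstacle is the bookkeeping around $\Delta'_t$: it still depends on $z$ through its $y$-linear and $J_i(y_i)$ contributions (the $\Delta'_{t2}$ of \eqnok{def_Gamma_2}), so a \emph{vector} $v_N$ cannot by itself strip all $z$-dependence from the right-hand side. The key realization is that the lemma claims only a bound on the expectation at a fixed $z$, so these residual $z$-dependent terms form a mean-zero martingale-difference sum and vanish under $\E_{[i_N]}$---they neither need nor admit being folded into $v_N$. The second place demanding care is verifying that $v_N$ is precisely the sum of the $z$-linear coefficients so the cancellation is exact, and that it is the constant-stepsize assumptions \eqnok{stepsize_unbounded} that force ${\cal B}_N$ to collapse to an affine function; without them the intermediate telescoping terms would survive and the square could not be completed.
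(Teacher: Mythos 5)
Your proposal is correct and follows essentially the same route as the paper's proof: both telescope ${\cal B}_N$ under the constant stepsizes \eqnok{stepsize_unbounded}, rewrite the resulting affine-in-$z$ remainder as $\langle v_N,\hat z^N-z\rangle$ plus squares completed at $\hat z^N$, absorb the cross term $-\langle Ax^N-Ax^{N-1},y^N-\hat y^N\rangle$ via Young's inequality together with \eqnok{stepsize_q3b}, discard the leftover nonpositive squares, and eliminate the martingale sum $\tsum_{t=1}^{N-1}(\Delta'_t-\E_{i_t}[\Delta'_t])$ by taking $\E_{[i_N]}$ at fixed $z$. As a minor point, your $v_N$ (without the factor $\tfrac12$ on the $x^N-x^1$ and $y^N-y^1$ components) is the one that makes the $z$-linear cancellation exact, whereas the paper's \eqnok{defV} carries a factor-of-$2$ slip inherited from the identity displayed just before \eqnok{eq11}; this does not affect the validity of \eqnok{eq7}, which only asserts existence of such a vector.
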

\begin{proof}
First note that
$$
 \begin{array}{lll}
\|x-x^{1}\|_2^2-\|x-x^{N}\|_2^2 & =& 2\left\langle {x^N-x^1,x} \right\rangle + \|x^{1}\|_2^2-\|x^{N}\|_2^2\\
& =&2\left\langle {x^N-x^1,x-\hat x^N} \right\rangle + 2\left\langle {x^N-x^1,\hat x^N} \right\rangle + \|x^{1}\|_2^2-\|x^{N}\|_2^2\\
& = &2\left\langle {x^N-x^1,x-\hat x^N} \right\rangle + \|x^{1}-\hat x^N\|_2^2-\|x^{N}-\hat x^N\|_2^2.
\end{array}
$$
Using this identity in \eqnok{summation} and the fact that $x^1=x^0$ and $y^1=\argmax_{y \in Y} \langle A x^1, y \rangle - J(y),$ we obtain
\beq \label{eq11}
\begin{array}{lll}
&&\tsum_{t=1}^{N-1}\gamma_t Q_0(z^{t+1},z) +  \left\langle {Ax^N-Ax^{N-1}, \hat y^N-y} \right\rangle \\
&&+ \tfrac{\gamma_{N-1}\eta_{N-1}}{2}\left\langle {x^N-x^{1},\hat x^N-x} \right\rangle+\tfrac{\tau_{N-1}}{2}\left\langle {y^N-y^{1},\hat y^N-y} \right\rangle\\
&\le &\tfrac{\gamma_{N-1}\eta_{N-1}}{2}\| \hat x^N-x^1\|_2^2-\tfrac{\gamma_{N-1}\eta_{N-1}}{2} \| \hat x^N-x^{N}\|_2^2 -\tfrac{\gamma_{N-1}\eta_{N-1}}{2}\| x^N-x^{N-1}\|_2^2 +\tfrac{\tau_{N-1}}{2}\| \hat y^N-y^1\|_2^2\\
&&-\tfrac{\tau_{N-1}}{2} \| \hat y^N-y^{N}\|_2^2-\left\langle {Ax^{N}-Ax^{N-1}, y^{N}-\hat y^N}\right\rangle+\tsum_{t=1}^{N-1}\left( \Delta'_t-\E_{i_t}[\Delta'_t]\right).
\end{array}
\eeq
Denoting
\beq \label{defV}
v_N=\left( \tsum_{t=1}^{N-1}\gamma_t\right)^{-1}\left(\tfrac{ \gamma_{N-1}\eta_{N-1}}{2}(x^N-x^1),( Ax^{N}-Ax^{N-1})+\tfrac{\tau_{N-1}}{2} (y^N-y^1) \right),
\eeq
and using the fact that $Q_0(z^{t+1},z)$ is linear, we conclude from \eqnok{eq11} that
\[
\begin{array}{ll}
&\left( \tsum_{t=1}^{N-1}\gamma_t\right) \left[Q_0(\hat z^{N},z) + \langle {v_N,\hat z^N -z} \rangle \right] \\
\le &\tfrac{\gamma_{N-1}\eta_{N-1}}{2}\| \hat x^N-x^1\|_2^2-\tfrac{\gamma_{N-1}\eta_{N-1}}{2} \| \hat x^N-x^{N}\|_2^2 -\tfrac{\gamma_{N-1}\eta_{N-1}}{2} \| x^N-x^{N-1}\|_2^2 +\tfrac{\tau_{N-1}}{2} \| \hat y^N-y^1\|_2^2\\
&-\tfrac{\tau_{N-1}}{2}  \| \hat y^N-y^{N}\|_2^2-\langle {Ax^{N}-Ax^{N-1}, y^{N}-\hat y^N}\rangle+\tsum_{t=1}^{N-1}\left( \Delta'_t-\E_{i_t}[\Delta'_t]\right),
\end{array}
\]
which together with the facts that
$$
\begin{array}{lll}
-\left\langle {Ax^{N}-Ax^{N-1}, y^{N}-\hat y^N}\right\rangle & \le& \|A\|_2 \| x^{N}-x^{N-1}\|_2 \| y^{N}-\hat y^N\|_2 \\
 &\le& \tfrac{\|A\|_2^2}{2\tau_{N-1}}\| x^N-x^{N-1}\|_2^2+\tfrac{\tau_{N-1}}{2} \| y^{N}-\hat y^N\|_2^2,
\end{array}
$$
and $\gamma_{N-1} \eta_{N-1}\tau_{N-1}\ge \|A\|_2^2,$ then impy that
$$
\begin{array}{ll}
Q_0(\hat z^{N},z) + \left\langle {v_N,\hat z^N -z} \right\rangle \le \left( \tsum_{t=1}^{N-1}\gamma_t\right)^{-1} \left[\tfrac{\gamma_{N-1}\eta_{N-1}}{2}\| \hat x^N-x^1\|_2^2+\tfrac{\tau_{N-1}}{2}\| \hat y^N-y^1\|_2^2+\tsum_{t=1}^{N-1}\left( \Delta'_t-\E_{i_t}[\Delta'_t]\right) \right].
\end{array}
$$
The result now immediately follows by taking expectation w.r.t $[i_N]$ on the both
 sides of the above inequality and noting that $\E_{i_t} [\Delta'_t-\E_{i_t}[\Delta'_t]]=0.$
\end{proof}

\vgap

The following theorem shows that the rate of convergence of the RPD algorithm for solving the unbounded saddle point problems.
\begin{theorem} \label{unbouded_theorem}
Let $z^t=(x^t,y^t), t=1,2,...,N$ be generated by Algorithm~\ref{algRPD} with $x^1=x^0$ and $y^1=\argmax_{y \in Y} \langle A x^1, y \rangle - J(y).$ Also assume that $q_t, \tau_t, \eta_t$ and $\gamma_t$ are set to \eqnok{stepsize_q1a}-\eqnok{stepsize_q3b} and \eqnok{stepsize_unbounded}.
\begin{itemize}
\item [a)]
For any $N \ge 1,$ there exists a vector $v_N$ such that
\begin{align}
\E_{[i_N]} \left[Q_0(\hat z^{N},z) + \left\langle {v_N,\hat z^N -z} \right\rangle \right]
&\le \tfrac{[3\gamma_{N-1}\eta_{N-1}+ 2(2-\gamma_{N-1})\eta_{N-1}]D^2}{\tsum_{t=1}^{N-1}\gamma_t}, \label{eq17}\\
\E _{[i_N]}\left[ \|v_N\|_2\right] &\le \tfrac{KD}{\tsum_{t=1}^{N-1}\gamma_t}, \label{vn_bound}
\end{align}
where
$$K=2\gamma_{N-1}\eta_{N-1}+2\sqrt{\gamma_{N-1}\tau_{N-1}\eta_{N-1}}+\|A\|_2 (1 + \sqrt{2}).$$
\item [b)]
For any $N \ge 1,$ there exists a vector $\sigma(y)$ such that \eqnok{vn_bound} holds, $\E[\sigma(y)]=0$ for any $y \in Y,$ and
\beq \label{eq117}
\E_{[i_N]} \left[\tilde g_{\sigma(y)}(\hat z^{N},z,v_N) \right]
\le \tfrac{[3\gamma_{N-1}\eta_{N-1}+ 2(2-\gamma_{N-1})\eta_{N-1}]D^2}{\tsum_{t=1}^{N-1}\gamma_t}.
\eeq
\end{itemize}

\end{theorem}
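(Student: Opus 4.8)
The plan is to obtain both parts as essentially bookkeeping on top of the two lemmas already proved in this subsection: the iterate-boundedness lemma furnishing \eqnok{bound1}--\eqnok{bound4}, and the perturbation lemma furnishing \eqnok{eq7} together with the explicit $v_N$ of \eqnok{defV}. No new recursion is needed; the task is to convert the already-established bounds on the distances $\|x^*-x^t\|_2$ and $\|y^*-y^t\|_2$ into bounds on the quantities appearing in \eqnok{eq7} and in $\|v_N\|_2$.

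For \eqnok{eq17}, I would start from \eqnok{eq7} and estimate the two right-hand quantities $\E_{[i_N]}[\|\hat x^N-x^1\|_2^2]$ and $\E_{[i_N]}[\|\hat y^N-y^1\|_2^2]$. Since $\hat z^N$ is the $\gamma_t$-weighted average of $z^{t+1}$, $t=1,\ldots,N-1$ (see \eqnok{defxbar}), convexity of $\|\cdot\|_2^2$ lets me replace each of these by the corresponding $\gamma_t$-weighted average of the per-iterate quantities. I would then write $\|x^{t+1}-x^1\|_2\le\|x^{t+1}-x^*\|_2+\|x^*-x^1\|_2$ (and likewise for $y$), apply the distance bounds \eqnok{bound1}/\eqnok{bound2} to the $x$-iterates and \eqnok{bound3}/\eqnok{bound4} to the $y$-iterates, and use \eqnok{def_D} to absorb $\|x^*-x^1\|_2^2\le D^2$ and $\|y^*-y^1\|_2^2\le (\eta_1\gamma_1/\tau_1)D^2$. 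The stepsize identities in \eqnok{stepsize_unbounded} keep $\gamma_t,\eta_t,\tau_t$ constant for $t\le N-2$, so that $\gamma_1\eta_1=\gamma_{N-1}\eta_{N-1}$ and $\tau_1=\tau_{N-1}$, which is what is needed to re-express every surviving constant through the index $N-1$ and collect them into the numerator $3\gamma_{N-1}\eta_{N-1}+2(2-\gamma_{N-1})\eta_{N-1}$.

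For \eqnok{vn_bound} I would work directly from the definition \eqnok{defV}. I bound $\|v_N\|_2$ by the triangle inequality into the three groups $\tfrac{\gamma_{N-1}\eta_{N-1}}{2}\|x^N-x^1\|_2$, $\|Ax^N-Ax^{N-1}\|_2\le\|A\|_2\|x^N-x^{N-1}\|_2$, and $\tfrac{\tau_{N-1}}{2}\|y^N-y^1\|_2$, all divided by $\tsum_{t=1}^{N-1}\gamma_t$. Taking expectations and using $\E\|\cdot\|_2\le(\E\|\cdot\|_2^2)^{1/2}$ (concavity of the square root), then applying \eqnok{bound1}, \eqnok{bound2}, \eqnok{bound4} together with $\|x^*-x^1\|_2\le D$ and $\|x^N-x^{N-1}\|_2\le\|x^N-x^*\|_2+\|x^*-x^{N-1}\|_2$, produces the three terms $2\gamma_{N-1}\eta_{N-1}$, $2\sqrt{\gamma_{N-1}\tau_{N-1}\eta_{N-1}}$, and $\|A\|_2(1+\sqrt2)$ defining $K$; this part is routine once the distance bounds are in hand. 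For part (b) I would imitate the perturbation-splitting argument of Theorem~\ref{theorem1}(b): the residual $\tsum_{t=1}^{N-1}(\Delta'_t-\E_{i_t}[\Delta'_t])$ that survives in \eqnok{eq11}/\eqnok{eq7} decomposes into a $y$-independent part and a $y$-dependent part, exactly as in \eqnok{def_Gamma_1}--\eqnok{def_Gamma_2}; moving the $y$-dependent part to the left defines a random function $\sigma(y)$ with $\E[\sigma(y)]=0$, after which maximizing over $z=(x,y)\in Z$ on both sides turns $Q_0(\hat z^N,z)+\langle v_N,\hat z^N-z\rangle+\sigma(y)$ into the perturbed gap $\tilde g_{\sigma(y)}$ of \eqnok{def_tgdelta}, while the estimate \eqnok{vn_bound} on $\E\|v_N\|_2$ is untouched by this rearrangement.

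The main obstacle is the constant bookkeeping in the $y$-part of \eqnok{eq17}. The early iterates enter $\hat y^N$ with bound \eqnok{bound3}, whose constant $(2-\gamma_{t-1})\eta_{t-1}$ evaluated in the constant regime is substantially larger than the last-iterate bound \eqnok{bound4}, and these early iterates carry most of the aggregate weight in $\hat y^N$. A naive application of convexity followed by the split through $y^*$ therefore appears to be lossy; to land on the clean factor $2(2-\gamma_{N-1})\eta_{N-1}$ one must combine the Jensen step, the triangle split, and the weights $\gamma_t$ (using $\gamma_1\eta_1=\gamma_{N-1}\eta_{N-1}$) delicately, and in particular exploit the small last-step bound \eqnok{bound4} rather than bounding all iterates uniformly. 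The analogous $x$-part of \eqnok{eq17}, the whole of \eqnok{vn_bound}, and the perturbation-splitting of part (b) are comparatively mechanical by contrast.
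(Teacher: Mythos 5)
Your plan coincides with the paper's own proof in every structural respect. Part a) is obtained there exactly as you propose: feed the distance bounds \eqnok{bound1}--\eqnok{bound4} into \eqnok{eq7}, using the splits $\|\hat x^N-x^1\|_2^2\le 2\|\hat x^N-x^*\|_2^2+2\|x^*-x^1\|_2^2$ (and its $y$-analogue), Jensen's inequality for $\|\cdot\|_2^2$ applied to the weighted average $\hat z^N$, the definition \eqnok{def_D} of $D$, and the constancy of $\gamma_t\eta_t$ and $\tau_t$ from \eqnok{stepsize_unbounded}; the bound \eqnok{vn_bound} is obtained by the triangle inequality applied to \eqnok{defV}, the splits through $z^*$, and $\E\|\cdot\|_2\le(\E\|\cdot\|_2^2)^{1/2}$, giving precisely the three terms of $K$; and for part b) the paper itself only remarks that the argument is the splitting of Theorem~\ref{theorem1}(b) and skips the details.

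The one point where you part company with the paper is, ironically, the point where you are more careful than it. The ``main obstacle'' you flag is real, and the paper does not overcome it: in the last step of its proof of \eqnok{eq17} it asserts
\[
\tfrac{1}{\tsum_{t=1}^{N-1}\gamma_t}\tsum_{t=1}^{N-1}\gamma_t\left[2\gamma_{t-1}\eta_{t-1}+2(2-\gamma_{t-1})\eta_{t-1}\right]D^2
=\left[2\gamma_{N-1}\eta_{N-1}+2(2-\gamma_{N-1})\eta_{N-1}\right]D^2,
\]
i.e., it treats $(2-\gamma_{t-1})\eta_{t-1}$ as constant in $t$. But \eqnok{stepsize_unbounded} only makes the products $\gamma_t\eta_t$ and $\tau_t$ constant, not $\eta_t$ itself: by \eqnok{stepsize_q1b} one has $\gamma_1=1/p$ and $\gamma_{N-1}=1$, hence $\eta_{N-1}=\eta_1/p$ (as is explicit in Corollary~\ref{unboundded_cor}), so $(2-\gamma_1)\eta_1\approx 2\eta_1$ exceeds $(2-\gamma_{N-1})\eta_{N-1}=\eta_1/p$ by a factor of about $2p$. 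No ``delicate'' combination of Jensen, the triangle split and the weights can rescue this: the iterates $y^2,\ldots,y^{N-1}$ carry all but a $p/(N+p-2)$ fraction of the total weight in $\hat y^N$, and \eqnok{bound3}---whose constant is genuinely $(2-\gamma_1)\eta_1$---is the only bound available for them. What this argument (yours and the paper's alike) actually proves is \eqnok{eq17} with numerator $[3\gamma_{N-1}\eta_{N-1}+2(2-\gamma_1)\eta_1]D^2=[\gamma_1\eta_1+4\eta_1]D^2$, which still yields the ${\cal O}(1/N)$ rate but degrades the constant in \eqnok{eq18} from ${\cal O}(p^{3/2})$ to ${\cal O}(p^{5/2})$. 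So carry out your plan exactly as written, but state the result with the index-$1$ constant; do not spend effort chasing the paper's index-$(N-1)$ constant, which its own proof does not justify.
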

\begin{proof}
We firs show the part a). It follows from \eqnok{bound1}, \eqnok{bound3}, \eqnok{bound2}, and \eqnok{bound4} that
$$
\begin{array}{lll}
  &&  \E_{[i_N]} \left[ \| \left(\gamma_{N-1}\eta_{N-1}(x^N-x^1), Ax^{N}-Ax^{N-1}+\tau_{N-1}(y^N-y^1)\right) \|_2\right] \\
&\le&   \E_{[i_N]} \left[  \| Ax^N-Ax^{N-1}\|_2+\gamma_{N-1}\eta_{N-1}\| x^N-x^1\|_2+\tau_{N-1}\| y^N-y^1\|_2\right]\\
&\le &\E_{[i_N]}\left[\|A\|_2 \| x^N-x^{N-1}\|_2+\gamma_{N-1}\eta_{N-1}\| x^N-x^1\|_2+\tau_{N-1}\| y^N-y^1\|_2\right]\\
&\le & \E_{[i_N]} \left[\gamma_{N-1}\eta_{N-1}\left(\| x^N-x^*\|_2+\| x^1-x^*\|_2\right)+\tau_{N-1}\left(\| y^N-y^*\|_2+\| y^1-y^*\|_2\right) \right]\\
& &+\E_{[i_N]} \left[\|A\|_2(\| x^N-x^*\|_2 + \| x^{N-1} - x^*\|_2)\right]\\
&\le& 2\gamma_{N-1}\eta_{N-1}D+2\sqrt{\gamma_{N-1}\tau_{N-1}\eta_{N-1}}D+\|A\|_2 (D + \sqrt{2}D)=KD.
\end{array}
$$
The above inequality and the definition of $v_N$ imply \eqnok{vn_bound}.
On the other hand, using \eqnok{bound1}, \eqnok{bound3}, \eqnok{bound2},  and \eqnok{bound4}, we have
$$
\begin{array}{lll}
& &\tfrac{\gamma_{N-1}\eta_{N-1}}{2}\E_{[i_N]}  \left[\| \hat x^N-x^1\|_2^2\right]+\tfrac{\tau_{N-1}}{2}\E_{[i_N]}  \left[\| \hat y^N-y^1\|_2^2\right] \\
&\le&\gamma_{N-1}\eta_{N-1}\E_{[i_N]} \left[ \| \hat x^N-x^*\|_2^2+\| x^*-x^1\|_2^2\right]+\tau_{N-1}\E_{[i_N]} \left[\| \hat y^N-y^*\|_2^2+\| y^*-y^1\|_2^2\right]\\
&=&  \gamma_{N-1}\eta_{N-1}D^2+\E_{[i_N]} \left[ \gamma_{N-1}\eta_{N-1}\| \hat x^N-x^*\|_2^2+ \tau_{N-1}\|\hat y^N-y^*\|_2^2\right]\\
&\le&   \gamma_{N-1}\eta_{N-1}D^2+\tfrac{1}{\tsum_{t=1}^{N-1}\gamma_t}\E _{[i_N]}\left[ \tsum_{t=1}^{N-1}\gamma_t \left(\gamma_{t-1}\eta_{t-1}\|x^t-x^*\|_2^2+\tau_{t-1} \| y^t-y^*\|_2^2 \right)\right]\\
&=& \gamma_{N-1}\eta_{N-1}D^2+\tfrac{1}{\tsum_{t=1}^{N-1}\gamma_t} \left[ \tsum_{t=1}^{N-1}\gamma_t \left(\gamma_{t-1}\eta_{t-1}\E_{[i_t]}\left[\|x^t-x^*\|_2^2\right]+ \tau_{t-1}\E_{[i_t]}\left[\| y^t- y^*\|_2^2\right] \right)\right]\\
&\le&  \gamma_{N-1}\eta_{N-1}D^2+\tfrac{1}{\tsum_{t=1}^{N-1}\gamma_t} \left[ \tsum_{t=1}^{N-1}\gamma_t \left(2\gamma_{t-1}\eta_{t-1}D^2+ \tau_{t-1}\tfrac{2(2-\gamma_{t-1})\eta_{t-1}}{\tau_{t-1}}D^2 \right)\right] \\
&=& \left(3\gamma_{N-1}\eta_{N-1}+ 2(2-\gamma_{N-1})\eta_{N-1}\right)D^2 .
\end{array}
$$
Using the above inequality in \eqnok{eq7}, we obtain \eqnok{eq17}.
The proof of part b) is similar to that of Theorem~\ref{theorem1}.b) and hence the details are skipped.
\end{proof}

\vgap

Below we specify a parameter setting that satisfies the assumptions in \eqnok{stepsize_q1a}-\eqnok{stepsize_q3b} and \eqnok{stepsize_unbounded} and leads to an optimal rate of convergence for the RPD algorithm in terms of its dependence on $N$.
\begin{corollary} \label{unboundded_cor}
Let $z^t=(x^t,y^t), t=1,2,...,N$ be generated by Algorithm~\ref{algRPD} with $x^1=x^0$ and $y^1=\argmax_{y \in Y} \langle A x^1, y \rangle - J(y).$ Also assume that $\gamma_t, q_t, \tau_t$ and $\eta_t$ are set to
\begin{align}
  q_{t} &= p,  \;\; \forall t \le N, \label{stepsize_unbounded1}\\
  \gamma_t&=\tfrac{1}{p}, \forall t=1,2,..,N-2, \;\;\mbox{and} \;\; \gamma_{N-1}=1, \label{stepsize_unbounded2}\\
  \tau_t&=\|A\|_2p^{3/2}, \; i=1,..,N-1, \label{stepsize_unbounded3}\\
  \eta_t&=\|A\|_2p^{3/2}, \; i=1,..,N-2, \;\; \mbox{and} \;\; \eta_{N-1}=\|A\|_2p^{1/2}. \label{stepsize_unbounded4}
\end{align}
Then for any $N \ge 1,$ there exists a vector $v_N$ such that
\begin{align}
\E_{[i_N]} \left[Q_0(\hat z^{N},z) + \left\langle {v_N,\hat z^N -z} \right\rangle \right]
&\le \tfrac{5p^{3/2}\|A\|_2D^2}{N+p-2}, \label{eq18} \\
\E_{[i_N]} \left[ \|v_N\|_2\right]
&\le \tfrac{p}{N+p-2}\left(4p^{1/2}+ (1+\sqrt{2})\right)\|A\|_2D. \label{eq19}
\end{align}
Moreover, for any $N \ge 1,$ there exists a vector $\sigma(y)$ such that \eqnok{eq19} holds, $\E[\sigma(y)]=0$ for any $y \in Y,$ and
\beq \label{eq118}
\E_{[i_N]} \left[\tilde g_{\sigma(y)}(\hat z^{N},z,v_N) \right]
\le \tfrac{5p^{3/2}\|A\|_2D^2}{N+p-2}.
\eeq
\end{corollary}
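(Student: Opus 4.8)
The plan is to obtain this corollary as a pure specialization of Theorem~\ref{unbouded_theorem}, in exactly the way Corollary~\ref{cor1} was read off from Theorem~\ref{theorem1}. Once the explicit stepsizes \eqnok{stepsize_unbounded1}--\eqnok{stepsize_unbounded4} are checked to satisfy every hypothesis of that theorem, namely \eqnok{stepsize_q1a}--\eqnok{stepsize_q3b} together with the extra equalities \eqnok{stepsize_unbounded}, the three bounds \eqnok{eq18}, \eqnok{eq19}, and \eqnok{eq118} follow by evaluating the three abstract quantities $\tsum_{t=1}^{N-1}\gamma_t$, the coefficient $3\gamma_{N-1}\eta_{N-1}+2(2-\gamma_{N-1})\eta_{N-1}$ appearing in \eqnok{eq17}, and the constant $K$ from \eqnok{vn_bound} at the chosen parameter values. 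No new inequality is required; the work is verification plus bookkeeping of constants.

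First I would verify the hypotheses. With $q_t=p$, relation \eqnok{stepsize_q1b} forces $\gamma_t=\tfrac1p q_t-\tfrac{p-1}p=\tfrac1p$ for $t\le N-2$ and $\gamma_{N-1}=1$, matching \eqnok{stepsize_unbounded2}; in particular $\gamma_t$ is constant for $t\le N-2$, which is the third line of \eqnok{stepsize_unbounded}. Since every $\tau_t$ equals the same constant, \eqnok{stepsize_q2a} and the first line of \eqnok{stepsize_unbounded} hold with equality. The products satisfy $\gamma_t\eta_t=\tfrac1p\cdot\|A\|_2p^{3/2}=\|A\|_2p^{1/2}$ for $t\le N-2$ and $\gamma_{N-1}\eta_{N-1}=1\cdot\|A\|_2p^{1/2}=\|A\|_2p^{1/2}$, so they are constant and \eqnok{stepsize_q2b} and the second line of \eqnok{stepsize_unbounded} hold with equality. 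The only coupled conditions are \eqnok{stepsize_q3a} and \eqnok{stepsize_q3b}, which I would check directly by substituting the values; the factor $p$ on the left of \eqnok{stepsize_q3a} is exactly what absorbs $q_t^2=p^2$ on the right.

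Next I would assemble the constants. The weight sum telescopes to $\tsum_{t=1}^{N-1}\gamma_t=\tfrac{N-2}{p}+1=\tfrac{N+p-2}{p}$, which produces both the denominator $N+p-2$ and the compensating factor $p$ when its reciprocal multiplies the numerator. For \eqnok{eq18}, the coefficient $3\gamma_{N-1}\eta_{N-1}+2(2-\gamma_{N-1})\eta_{N-1}$ collapses at $\gamma_{N-1}=1$ to $5\eta_{N-1}=5\|A\|_2p^{1/2}$; multiplying by $p/(N+p-2)$ yields the stated $\tfrac{5p^{3/2}\|A\|_2D^2}{N+p-2}$. The perturbed bound \eqnok{eq118} shares this right-hand side and follows from Theorem~\ref{unbouded_theorem}.b) by the identical substitution, just as part~b) of Theorem~\ref{theorem1} mirrored its part~a).

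The step I expect to require the most care is evaluating $K=2\gamma_{N-1}\eta_{N-1}+2\sqrt{\gamma_{N-1}\tau_{N-1}\eta_{N-1}}+\|A\|_2(1+\sqrt2)$ for \eqnok{eq19}. The two linear summands are immediate, but the geometric-mean term $2\sqrt{\gamma_{N-1}\tau_{N-1}\eta_{N-1}}$ is where an incorrect power of $p$ would silently spoil the final constant, so I would track $\tau_{N-1}$ and $\eta_{N-1}$ there with particular attention: to reach the claimed $\|A\|_2\bigl(4p^{1/2}+(1+\sqrt2)\bigr)$ one needs $\gamma_{N-1}\tau_{N-1}\eta_{N-1}=\|A\|_2^2p$, so that this term also contributes $2\|A\|_2p^{1/2}$ and combines with $2\gamma_{N-1}\eta_{N-1}=2\|A\|_2p^{1/2}$ into $4\|A\|_2p^{1/2}$. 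After confirming the constant, multiplication by $(\tsum_{t=1}^{N-1}\gamma_t)^{-1}=p/(N+p-2)$ gives \eqnok{eq19}, and since the same vector $v_N$ is used throughout, the perturbed statement is covered simultaneously.
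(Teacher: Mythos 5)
Your overall route is exactly the paper's: check that \eqnok{stepsize_unbounded1}--\eqnok{stepsize_unbounded4} satisfy \eqnok{stepsize_q1a}--\eqnok{stepsize_q3b} and \eqnok{stepsize_unbounded}, compute $\tsum_{t=1}^{N-1}\gamma_t=\tfrac{N+p-2}{p}$, and substitute into \eqnok{eq17}, \eqnok{vn_bound}, and \eqnok{eq117}. Your verification of the hypotheses is correct, and your derivations of \eqnok{eq18} (via $3\gamma_{N-1}\eta_{N-1}+2(2-\gamma_{N-1})\eta_{N-1}=5\eta_{N-1}=5\|A\|_2p^{1/2}$) and of \eqnok{eq118} are correct and match the paper's (very terse) proof.

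The gap is at precisely the step you flagged but left conditional. You correctly identify that \eqnok{eq19} requires $\gamma_{N-1}\tau_{N-1}\eta_{N-1}=\|A\|_2^2\,p$, but you never check this identity against the stated stepsizes, and in fact it fails: with $\gamma_{N-1}=1$, $\tau_{N-1}=\|A\|_2p^{3/2}$ from \eqnok{stepsize_unbounded3} and $\eta_{N-1}=\|A\|_2p^{1/2}$ from \eqnok{stepsize_unbounded4}, one gets $\gamma_{N-1}\tau_{N-1}\eta_{N-1}=\|A\|_2^2p^{2}$, hence $2\sqrt{\gamma_{N-1}\tau_{N-1}\eta_{N-1}}=2\|A\|_2p$ and $K=\|A\|_2\bigl(2p^{1/2}+2p+1+\sqrt{2}\bigr)$. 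Plugging this into \eqnok{vn_bound} gives $\E_{[i_N]}[\|v_N\|_2]\le \tfrac{p}{N+p-2}\bigl(2p^{1/2}+2p+1+\sqrt{2}\bigr)\|A\|_2D$, which is strictly weaker than \eqnok{eq19} whenever $p\ge 2$, since then $2p>2p^{1/2}$. So the sentence ``after confirming the constant, multiplication \ldots gives \eqnok{eq19}'' cannot be executed: \eqnok{eq19} as stated does not follow from Theorem~\ref{unbouded_theorem} under the stepsizes \eqnok{stepsize_unbounded3}--\eqnok{stepsize_unbounded4}. This discrepancy is inherited from the paper itself, whose proof performs the same silent substitution; the constant $4p^{1/2}$ in \eqnok{eq19} corresponds to the choice $\tau_t=\|A\|_2p^{1/2}$, which also satisfies all the hypotheses (including \eqnok{stepsize_q3a}--\eqnok{stepsize_q3b} with equality) and leaves \eqnok{eq18} unchanged, suggesting a typo in \eqnok{stepsize_unbounded3}. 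A complete proof must either replace the constant in \eqnok{eq19} by $2p^{1/2}+2p+1+\sqrt{2}$ or change $\tau_t$ to $\|A\|_2p^{1/2}$; carrying out the arithmetic you deferred would have surfaced exactly this.
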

\begin{proof}It is easy to verify that $\gamma_t, q_t, \tau_t$ and $\eta_t$ defined in \eqnok{stepsize_unbounded1}-\eqnok{stepsize_unbounded4} satisfy \eqnok{stepsize_q1a}-\eqnok{stepsize_q3b} and \eqnok{stepsize_unbounded}.
We also have
$$\tsum_{t=1}^{N-1}\gamma_t=\tfrac{(N+p-2)}{p}.$$
Plugging this identity into \eqnok{eq17}-\eqnok{eq117}, we obtain \eqnok{eq18}-\eqnok{eq118} respectively.
\end{proof}


\vgap

A few remarks about the results obtained in Theorem~\ref{unbouded_theorem} and
Corollary~\ref{unboundded_cor} are in place. First, in the view of \eqnok{eq18}, the total number of iterations
required by the RPD algorithm to find an $\epsilon$-solution of problem~\eqnok{spp}, i.e.,
a point $\hat z \in Z$ such that $\E[Q_0(\hat z, z)+ \left\langle {v_N,\hat z^N -z} \right\rangle] \le \epsilon$ for any $z \in Z$, can be bounded by
$
{\cal O}(p^{3/2}\|A\|_2\Omega_X \Omega_Y/\epsilon).
$
Second, similar to the bounded problems, these results are new and optimal in terms of its dependence on $\epsilon$ for a given $p$ (see discussions in \cite{CheLanOu13-1}).
To the best of our knowledge, this is the first time such an optimal rate of convergence is obtained
in the literature for a randomized algorithm for solving the saddle point problem \eqnok{spp}-\eqnok{spp1} with unbounded domains.

\subsection{Non-Euclidean randomized primal-dual methods}
In this subsection, we show that
by replacing the usual Euclidean distance by generalized non-Euclidean prox-functions, Algorithm~\ref{algRPD} can be
adaptive to different geometry of the feasible sets .

Recall that a function $\omega_i:Y_i \rightarrow R$ is a distance generating function~\cite{NJLS09-1}
with modulus $\alpha_i$ with respect to $\| \cdot \| _i$, if $\omega_i$ is continuously
differentiable and strongly convex with parameter $\alpha_i$
with respect to $\| \cdot \|_i$.
Without loss of generality, we assume that $\alpha_i=1$ for any $i = 1, \ldots, b$,
because we can always rescale $\omega_i(y) $ to $\bar \omega_i(y) =\omega_i(y) / \alpha_i$ in case $\alpha_i \ne 1$. Therefore, we have
$$
\langle y  - z, \nabla \omega_i ( y ) - \nabla \omega_i { (z)} \rangle
\ge \| {x  - z} \|_i^2 \ \ \forall y ,z \in Y_i .
$$
The prox-function associated with $\omega_i$ is given by
\beq \label{def_vi}
V_i (z,y) = \omega _i (y) - [\omega _i (z)
+ \langle {\nabla \omega_i (z),y - z } \rangle ] \ \ \forall y, z \in Y_i.
\eeq
The prox-function $V_i(\cdot,\cdot)$ is also called the Bregman's distance,
which was initially studied by Bregman \cite{Breg67}.
Suppose that the set $Y_i$ is bounded, the distance generating function
$\w_i$ also gives rise to the diameter of $Y_i$, which
will be used frequently in our convergence analysis:
\beq \label{def_D_i}
{\cal D}_{\omega _i ,Y_i }
:= {\mathop {\max }\limits_{y \in Y_i } \omega _i (y)
- \mathop {\min }\limits_{y \in Y_i } \omega _i (y)} .
\eeq
For the sake of notational convenience, sometimes we simply denote
${\cal D}_{\omega _i ,Y_i }$ by ${\cal D}_i$,
 $V(y,z)=\tsum_{i=1}^p V_i(y^{(i)},z^{(i)})$, $\forall y,z \in Y$, and $D_Y=\tsum_{i=1}^p D_i.$
Let $y_1^{(i)} = \argmin_{y \in Y_i} \w_i(y)$, $i =1, \ldots, b$. We can easily see that
for any $y \in Y$,
\beq \label{bound_V_i}
\begin{array}{ll}
\|y_1^{(i)} - y^{(i)}\|_i^2 / 2 &\le V_i (y_1^{(i)} ,y^{(i)} ) = \w_i(y^{(i)})
- \w_i(y_1^{(i)}) - \langle \nabla
\w_i(y_1^{(i)}), y^{(i)}- y_1^{(i)}\rangle\\
&\le \w_i(y^{(i)}) - \w_i(y_1^{(i)}) \le {\cal D}_i.
\end{array}
\eeq
Moreover, we define
$\| y\|^2 = \| y^{(1)}\|_{1}^2+\ldots+\| y^{(p)}\|_{p}^2$
and denote its conjugate by
$\| y\|_*^2 = \| y^{(1)}\|_{1,*}^2+\ldots+\| y^{(p)}\|_{p,*}^2$.
Similarly, letting $\omega: X \to \bbr$ be continuously
differentiable and strongly convex w.r.t $\|\cdot\|$ with modulus $1$,
we define the prox-function $V(\cdot,\cdot)$ associated with $\omega$ and use $D_X$ to denote the diameter of $X.$

We are now ready to describe a non-Euclidean variant of Algorithm~\ref{algRPD},
which is obtained by replacing the Euclidean distances used in the two subproblems \eqnok{eqn_update_y} and \eqnok{eqn_update_x}
in Step 2 of Algorithm~\ref{algRPD} with the Bregman's distances in \eqnok{eqn_update_y_g} and \eqnok{eqn_update_x_g}.

\begin{algorithm} [H]
    \caption{The non-Euclidean RPD Method}
    \label{algGRPD}
    \begin{algorithmic}
\STATE Let $z^1=(x^1,y^1) \in X\times Y$,
and nonnegative stepsizes $\{\tau_t\},$ $\{\eta_t\},$ parameters $\{q_t\}$, and weights $\{\gamma_t\}$
be given. Set $\bar x^1 = x^1$.
\FOR {$t=1, \ldots,N$}

\STATE 1. Generate a random variable $i_t$ uniformly from $\{1,2,...,p \}.$

\STATE 2. Update $y^{t+1}$ and $x^{t+1}$ by

\begin{align}
y_i^{t+1} &=
\begin{cases}
\argmin_{y_{i_t} \in Y_{i_t}}{\left\langle {-U_{i_t}A\bar x^t,y}\right\rangle+J_{i_t} (y_{i_t})+\tau_t V_{i_t}(y_{i_t},y_{i_t}^t) }, & i = i_t,\\
y_i^{t}, & i \ne i_t.
\end{cases}  \label{eqn_update_y_g}\\
x^{t+1}&=\argmin_{x \in X} {h(x)+\left\langle {x,A^Ty^{t+1}}\right\rangle}+\eta_t V(x,x^t). \label{eqn_update_x_g}\\
\bar x^{t+1}&=q_t (x^{t+1}-x^t)+ x^{t+1}. \label{eqn_x_bar_g}
\end{align}

\ENDFOR

{\bf Output:}  Set \beq \label{defxbar_g}\hat z^N=\left(\tsum_{t=1}^{N-1}{\gamma_t}\right)^{-1}\tsum_{t=1}^{N-1}{\gamma_t} z^{t+1}.\eeq

    \end{algorithmic}
\end{algorithm}

We will show that the non-Euclidean RPD algorithms exhibit similar convergence properties to the
Euclidean RPD algorithm for solving general bilinear saddle point problems with bounded feasible sets,
but they can be more flexible on the selection of the norms and distance generating functions.

First, the following result generalizes Proposition~\ref{proposition1}.
\begin{proposition} \label{proposition1_g}
Let $\{y^t\}_{t \ge 1}$ and $\{x^t\}_{t \ge 1}$ be generated by Algorithm~\ref{algGRPD}. Then for any $z \in Z,$ we have
\beq \label{recursion1_g}
\begin{array}{ll}
&\gamma_tQ_0(z^{t+1},z)+\left\langle {\gamma_tAx^{t+1}-Ax^t, y^{t+1}-y}\right\rangle+ (\gamma_t-1)\left[ J(y)-J(y^{t+1})\right]-\Delta _t \\
&\le \gamma_t\eta_t\left[V(x,x^t)- V(x^t,x^{t+1})-V(x,x^{t+1})\right]+\tau_t\left[V(y,y^t)-V(y,y^{t+1})-V(y^t,y^{t+1})\right],
\end{array}
\eeq
where $\Delta _t$ is defined in \eqnok{def_triangle_t}.
\end{proposition}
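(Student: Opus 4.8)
The plan is to reproduce the proof of Proposition~\ref{proposition1} almost line for line, changing only the two places where the optimality conditions of the prox-subproblems enter. The essential observation is that in the proof of Proposition~\ref{proposition1} the Euclidean squared distances appear \emph{only} through the quadratic penalties $\tfrac{\eta_t}{2}\|\cdot\|_2^2$ and $\tfrac{\tau_t}{2}\|\cdot\|_2^2$ in the updates \eqref{eqn_update_x} and \eqref{eqn_update_y}; every other manipulation --- the rewriting of $\langle x^{t+1}-x,A^Ty^{t+1}\rangle$ that leads to \eqref{observation3}, the block identity \eqref{observation111}, the separability identities for $J$ in \eqref{observation112}, and the final summation and combination --- involves only the linear operator $A$ and the separable functions $J_i$. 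In particular the perturbation term $\Delta_t$ of \eqref{def_triangle_t} is literally unchanged. It therefore suffices to establish the non-Euclidean analogues of \eqref{observation2} and \eqref{observation1} and then follow the original argument verbatim.

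First I would invoke the three-point property of the Bregman prox-mapping (the non-Euclidean counterpart of the law of cosines; see Lemma~6 of \cite{LaLuMo11-1} and Lemma~2 of \cite{GhaLan12-2a}, already used in the proof of Proposition~\ref{proposition2}), applied to the $x$-update \eqref{eqn_update_x_g} with the convex function $h(x)+\langle x,A^Ty^{t+1}\rangle$. This gives, for every $x\in X$,
\[
h(x^{t+1})-h(x)+\left\langle {x^{t+1}-x,\,A^Ty^{t+1}}\right\rangle \le \eta_t\left[V(x,x^t)-V(x^t,x^{t+1})-V(x,x^{t+1})\right],
\]
which is precisely \eqref{observation2} with each $\tfrac12\|\cdot\|_2^2$ replaced by the corresponding $V(\cdot,\cdot)$. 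Inserting the same rewriting of the cross term used in the original proof then yields the Bregman analogue of \eqref{observation3}, in which every $\tfrac{\eta_t}{2}\|\cdot\|_2^2$ has become the matching $V(\cdot,\cdot)$.

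Next I would apply the same prox-mapping inequality to the single-block update \eqref{eqn_update_y_g}, with convex function $J_{i_t}$ and linear term $\langle -U_{i_t}A\bar x^t,\cdot\rangle$, obtaining the analogue of \eqref{observation1},
\[
\left\langle {-U_{i_t}A\bar x^t,\,y^{t+1}-y}\right\rangle + J_{i_t}(y_{i_t}^{t+1})-J_{i_t}(y_{i_t}) \le \tau_t\left[V_{i_t}(y_{i_t},y_{i_t}^t)-V_{i_t}(y_{i_t},y_{i_t}^{t+1})-V_{i_t}(y_{i_t}^{t+1},y_{i_t}^t)\right].
\]
The block identity \eqref{observation111} and the $J$-part of \eqref{observation112} carry over unchanged, while the two Euclidean norm identities in \eqref{observation112} are replaced by their Bregman counterparts, which follow from the separability $V(y,y')=\sum_i V_i(y_i,y_i')$ together with the fact that only the $i_t$-th block is altered: for $i\ne i_t$ one has $y_i^{t+1}=y_i^t$, so $V_{i_t}(y_{i_t}^{t+1},y_{i_t}^t)=V(y^{t+1},y^t)$ and $V_{i_t}(y_{i_t},y_{i_t}^t)-V_{i_t}(y_{i_t},y_{i_t}^{t+1})=V(y,y^t)-V(y,y^{t+1})$. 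Multiplying the Bregman analogue of \eqref{observation3} by $\gamma_t$ and adding the $y$-inequality, exactly as in the original proof, then produces \eqref{recursion1_g}.

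The one point that needs genuine care is the bookkeeping of the argument order in the Bregman three-point property: since $V(a,b)$ is not symmetric, I must make sure the prox-center occupies the argument that reproduces the precise terms $V(x^t,x^{t+1})$, $V(x,x^{t+1})$ and $V(y^t,y^{t+1})$, $V(y,y^{t+1})$ appearing on the right-hand side of \eqref{recursion1_g}. As in the Euclidean case, each of these relations is an inequality arising from the variational inequality of a constrained minimum rather than an exact identity, but it points in the direction we need, so no extra slack is lost. I do not expect the strong-convexity modulus of the $\omega_i$ to enter at this stage; it is needed only later, when bounding $\E_{i_t}[\Delta_t]$ in the non-Euclidean analogue of Lemma~\ref{lemma1}, where the estimate $V(y^t,y^{t+1})\ge\tfrac12\|y^t-y^{t+1}\|^2$ and a dual-norm Cauchy-Schwarz step replace the Euclidean computation in \eqref{cauchy}.
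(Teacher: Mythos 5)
Your proposal is correct and takes essentially the same route as the paper: the paper's proof likewise consists of stating the Bregman (three-point) optimality conditions for the two prox-subproblems \eqnok{eqn_update_x_g} and \eqnok{eqn_update_y_g} (its \eqnok{observation2_g} and \eqnok{observation1_g}), noting the block-separability identities $V_{i_t}(y_{i_t}^t,y_{i_t}^{t+1})=V(y^t,y^{t+1})$ and $V_{i_t}(y_{i_t},y_{i_t}^t)-V_{i_t}(y_{i_t},y_{i_t}^{t+1})=V(y,y^t)-V(y,y^{t+1})$, and then repeating the argument of Proposition~\ref{proposition1} verbatim with Euclidean squared distances replaced by Bregman distances, with $\Delta_t$ unchanged. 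The only discrepancy is the argument order in your term $V_{i_t}(y_{i_t}^{t+1},y_{i_t}^t)$ versus the paper's $V_{i_t}(y_{i_t}^t,y_{i_t}^{t+1})$ --- a bookkeeping point you explicitly flag, and one on which the paper itself is not internally consistent with its definition \eqnok{def_vi} --- so it does not constitute a substantive gap.
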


\begin{proof}
By the optimality condition of problem \eqnok{eqn_update_x_g}, for all $x \in X,$ we have
\beq \label{observation2_g}
h(x^{t+1})-h(x)+\left\langle {x^{t+1}-x, A^Ty^{t+1}}\right\rangle
+\eta_tV(x^t,x^{t+1})+\eta_tV(x,x^{t+1}) \le \eta_tV( x,x^t).
\eeq
Similarly, by the optimality condition of problem \eqnok{eqn_update_y_g}, for all $y \in Y,$ we have
\beq \label{observation1_g}
\left\langle {-U_{i_t}A{\bar x}^t, y^{t+1}-y}\right\rangle +J_{i_t}(y_{i_t}^{t+1})-J_{i_t}(y_{i_t})
+\tau_tV_{i_t}(y_{i_t}^t,y_{i_t}^{t+1})+\tau_tV_{i_t}(y_{i_t},y_{i_t}^{t+1}) \le \tau_tV_{i_t}(y_{i_t},y_{i_t}^t).
\eeq
The result follows by using an argument similar to the one used in the proof of Proposition~\ref{proposition1} by replacing
the Euclidean distances with Bregman's distances and noting that
$$
\begin{array}{lll}
V_{i_t}(y_{i_t}^t,y_{i_t}^{t+1})&=&V(y^t,y^{t+1}),\\
V_{i_t}y_{i_t},y_{i_t}^t)-V_{i_t}(y_{i_t},y_{i_t}^{t+1})&=&V(y,y^t)-V(y,y^{t+1}).
\end{array}
$$
\end{proof}

The following lemma provides an upper bound of $\E_{i_t}[\Delta_t].$
\begin{lemma} \label{lemma1_g}
If $i_t$ is uniformly distributed on $\{1,2,...,p\},$ then
$$\begin{array}{ll}
\E_{i_t}[\Delta _t] \le& \left\langle{\left( \tfrac{1}{p}q_{t-1}-\tfrac{p-1}{p} \right) Ax^t-\tfrac{1}{p}q_{t-1}Ax^{t-1}, y^{t}-y}\right\rangle+\tfrac{p-1}{p} \left[ J (y^{t})-J(y)\right]\\
&+\tfrac{q_{t-1}^2 \|A\|^2}{2p\tau_t}\| x^t-x^{t-1}\|^2+\tfrac{\tau_t}{2}\E_{i_t} \left[\| y^{t+1}-y^t\|^2\right].
\end{array}
$$
\end{lemma}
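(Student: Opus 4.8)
The plan is to mirror the proof of Lemma~\ref{lemma1} line by line, substituting the generalized norm $\|\cdot\|$ and its conjugate $\|\cdot\|_*$ for the Euclidean norm exactly at the single point where Cauchy--Schwarz is invoked. First I would observe that the definition of $\Delta_t$ in \eqnok{def_triangle_t} involves no norm, so it can be rewritten precisely as in \eqnok{def_delat_a},
\[
\Delta_t = \left\langle q_{t-1}U_{i_t}A(x^t-x^{t-1}) - \bar U_{i_t}Ax^t, y^t - y\right\rangle - \left\langle q_{t-1}U_{i_t}A(x^t-x^{t-1}), y^{t+1}-y^t\right\rangle + \tsum_{i\ne i_t}[J_i(y_i^t)-J_i(y_i)].
\]
Since $i_t$ is uniform on $\{1,\ldots,p\}$, the expectations of the first and third terms are purely algebraic/convexity statements insensitive to the geometry; hence relations \eqnok{recur_expectation} and \eqnok{norm_equal} apply verbatim, producing the linear term $\langle(\tfrac{1}{p}q_{t-1}-\tfrac{p-1}{p})Ax^t - \tfrac{1}{p}q_{t-1}Ax^{t-1}, y^t-y\rangle$ together with $\tfrac{p-1}{p}[J(y^t)-J(y)]$.

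The only place where the chosen geometry enters is the middle cross term. Here I would replace the Euclidean estimate \eqnok{cauchy} by the generalized Cauchy--Schwarz (H\"older) inequality $\langle u,v\rangle \le \|u\|_*\,\|v\|$ followed by Young's inequality, giving
\[
q_{t-1}\langle U_{i_t}A(x^t-x^{t-1}), y^{t+1}-y^t\rangle \le \tfrac{q_{t-1}^2}{2\tau_t}\|U_{i_t}A(x^t-x^{t-1})\|_*^2 + \tfrac{\tau_t}{2}\|y^{t+1}-y^t\|^2.
\]

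The key step, and the one demanding the most care, is the expectation of $\|U_{i_t}A(x^t-x^{t-1})\|_*^2$. Because the conjugate norm decomposes blockwise as $\|w\|_*^2 = \tsum_{i=1}^p\|w^{(i)}\|_{i,*}^2$ while $U_{i_t}$ annihilates every block but the $i_t$-th, one has $\|U_{i_t}w\|_*^2 = \|w^{(i_t)}\|_{i_t,*}^2$, so the uniformity of $i_t$ yields $\E_{i_t}[\|U_{i_t}w\|_*^2] = \tfrac{1}{p}\tsum_{i=1}^p\|w^{(i)}\|_{i,*}^2 = \tfrac{1}{p}\|w\|_*^2$. Taking $w = A(x^t-x^{t-1})$ and then applying the operator-norm bound $\|A(x^t-x^{t-1})\|_* \le \|A\|\,\|x^t-x^{t-1}\|$ (the operator norm of $A$ induced by the two chosen norms) delivers the term $\tfrac{q_{t-1}^2\|A\|^2}{2p\tau_t}\|x^t-x^{t-1}\|^2$ alongside $\tfrac{\tau_t}{2}\E_{i_t}[\|y^{t+1}-y^t\|^2]$. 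Summing the three contributions gives the claimed bound.

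The main obstacle is conceptual rather than computational: one must check that the blockwise decomposition of $\|\cdot\|_*$ is exactly compatible with the action of the selection matrices $U_{i_t}$, so that the factor $\tfrac{1}{p}$ emerges in the general geometry precisely as it did for $\|\cdot\|_2$. Once this blockwise identity is established, every remaining manipulation is identical in form to that of Lemma~\ref{lemma1}, with $\|\cdot\|_2$ replaced throughout by the dual pair $(\|\cdot\|,\|\cdot\|_*)$.
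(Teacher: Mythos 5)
Your proposal is correct and follows essentially the same route as the paper: the paper's proof of Lemma~\ref{lemma1_g} likewise reuses the argument of Lemma~\ref{lemma1} verbatim for the linear and $J$-terms and modifies only the estimate \eqnok{cauchy}, applying H\"older/Young with the generalized norms, the blockwise identity $\E_{i_t}[\|U_{i_t}w\|^2]=\tfrac{1}{p}\|w\|^2$, and the induced operator norm of $A$. If anything, you are more careful than the paper, which omits the stars distinguishing $\|\cdot\|_*$ from $\|\cdot\|$ in the dual pairing, whereas your explicit check that $\|\cdot\|_*^2$ decomposes blockwise under $U_{i_t}$ is exactly the point that justifies the $\tfrac{1}{p}$ factor.
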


\begin{proof}
The proof is similar to the one of Lemma~\ref{lemma1} except that
we now replace the Euclidean distances by Bregman's distances, and that in \eqnok{cauchy}, we use the fact that $\| x-z\|^2 /2\le V(x,z),$ i.e.,
$$\begin{array}{lll}
\E_{i_t} \left[\left\langle {q_{t-1}U_{i_t}A(x^t-x^{t-1}),y^{t+1}-y^t}\right\rangle \right]&\le& \E_{i_t} \left[q_{t-1} \| U_{i_t}A( x^t-x^{t-1})\| \| y^{t+1}-y^t\|\right]\\
&\le& \E_{i_t} \left[\tfrac{q_{t-1}^2}{2\tau_t}\| U_{i_t}A(x^t-x^{t-1})\|^2+\tfrac{\tau_t}{2}\| y^{t+1}-y^t\|^2\right]\\
&\le& \tfrac{q_{t-1}^2}{2\tau_t}\E_{i_t} \left[\| U_{i_t}A(x^t-x^{t-1})\|^2\right]+\tfrac{\tau_t}{2}\E_{i_t} \left[\| y^{t+1}-y^t\|^2\right]\\
&= & \tfrac{q_{t-1}^2 }{2p\tau_t}\| A(x^t-x^{t-1})\|^2+\tfrac{\tau_t}{2}\E_{i_t} \left[\| y^{t+1}-y^t\|^2\right]\\
&\le& \tfrac{q_{t-1}^2 \|A\|^2}{2p\tau_t}\| x^t-x^{t-1}\|^2+\tfrac{\tau_t}{2}\E_{i_t} \left[\| y^{t+1}-y^t\|^2\right]\\
&\le& \tfrac{q_{t-1}^2 \|A\|^2}{p\tau_t}V(x^t,x^{t-1})+\tau_t\E_{i_t} \left[V(y^{t+1},y^t)\right].
\end{array}
$$
\end{proof}

Theorem~\ref{theorem1_g} below describes some convergence properties of
the non-Euclidean RPD methods.

\begin{theorem} \label{theorem1_g}
Suppose that the starting point $z^1$ is chosen such that $x^1=x^0$ and $y^1=\argmax_{y \in Y} \langle A x^1, y \rangle - J(y).$ Also assume that the parameters $q_t, \gamma_t, \tau_t$ and $\eta_t$ are set to \eqnok{stepsize_q1a}-\eqnok{stepsize_q3b}.
Then, for any $N \ge 1,$ we have
\beq \label{convergence_g}
\begin{array}{ll}
\E_{[i_N]}[Q_0(\hat z^{N},z)] \le & \left( \tsum_{t=1}^{N-1}\gamma_t\right)^{-1} \left [\tfrac{\gamma_{1}\eta_{1}}{2}D_X+\tfrac{\tau_{1}}{2}D_Y \right ], \; \forall z \in Z .
\end{array}
\eeq
where $\hat z^{N}$ is defined in \eqnok{defxbar} and the expectation is taken w.r.t. to $i_{[N]}=(i_1,...,i_{N-1})$.
\end{theorem}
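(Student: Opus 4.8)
The plan is to transcribe the proof of Theorem~\ref{theorem1}(a) essentially line by line, replacing each squared Euclidean distance by the corresponding Bregman distance $V(\cdot,\cdot)$ and invoking Proposition~\ref{proposition1_g} and Lemma~\ref{lemma1_g} wherever Proposition~\ref{proposition1} and Lemma~\ref{lemma1} were used. First I would combine these two ingredients into the one-step recursion for $\gamma_t Q_0(z^{t+1},z)$, keeping the same peel-off $\Delta'_t := \Delta_t - \tfrac{\tau_t}{2}\|y^t-y^{t+1}\|_2^2$ as in Theorem~\ref{theorem1}; the dual Bregman term $-\tau_t V(y^t,y^{t+1})$ supplied by Proposition~\ref{proposition1_g} is then dominated by $-\tfrac{\tau_t}{2}\|y^t-y^{t+1}\|_2^2$ via strong convexity, so that the $y$-increment terms cancel in conditional expectation exactly as before. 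Summing over $t=1,\dots,N-1$, imposing \eqnok{stepsize_q1a}--\eqnok{stepsize_q1b}, and introducing the Bregman analogue $\tilde{\cal B}_N(z,z^{[N]}) := \tsum_{t=1}^{N-1}\gamma_t\eta_t[V(x,x^t)-V(x,x^{t+1})] + \tsum_{t=1}^{N-1}\tau_t[V(y,y^t)-V(y,y^{t+1})]$ of \eqnok{defB}, I would obtain the exact counterpart of \eqnok{summation}.

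Next I would clear the boundary and coupling terms in the same way: the initialization $x^1=x^0$ together with $y^1=\argmax_{y\in Y}\langle Ax^1,y\rangle-J(y)$ annihilates the $t=1$ contributions, leaving only the single coupling term $-\langle Ax^N-Ax^{N-1}, y^N-y\rangle$ at the last step. This I would estimate by Cauchy--Schwarz in the norm pair $(\|\cdot\|,\|\cdot\|_*)$ together with $\|A(x^N-x^{N-1})\|_* \le \|A\|\,\|x^N-x^{N-1}\|$, producing $\tfrac{\|A\|^2}{2\tau_{N-1}}\|x^N-x^{N-1}\|^2 + \tfrac{\tau_{N-1}}{2}\|y^N-y\|^2$. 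After invoking convexity of $Q_0(\cdot,z)$ and the definition \eqnok{defxbar_g} of $\hat z^N$, the argument reduces to reabsorbing these residual squared norms into Bregman terms.

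The heart of the adaptation is this reabsorption, and the only tool is the modulus-one strong convexity inequality $\tfrac12\|u-v\|^2 \le V(u,v)$. Concretely, I would dominate each interior difference $\|x^{t+1}-x^t\|^2$ by $\gamma_t\eta_t V(x^t,x^{t+1})$ under \eqnok{stepsize_q3a}, the terminal difference $\|x^N-x^{N-1}\|^2$ by $\gamma_{N-1}\eta_{N-1}V(x^{N-1},x^N)$ under \eqnok{stepsize_q3b}, and the stray $\tfrac{\tau_{N-1}}{2}\|y^N-y\|^2$ by the final negative telescoped term $\tau_{N-1}V(y,y^N)$ of $\tilde{\cal B}_N$. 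The monotonicity hypotheses \eqnok{stepsize_q2a}--\eqnok{stepsize_q2b} then telescope $\tilde{\cal B}_N$ down to $\gamma_1\eta_1 V(x,x^1)+\tau_1 V(y,y^1)$, which I would bound by the diameters $D_X$ and $D_Y$ through \eqnok{bound_V_i}--\eqnok{def_D_i}. Finally, taking expectation over $(i_1,\dots,i_{N-1})$ and using the martingale-difference identity $\E_{i_t}[\Delta'_t-\E_{i_t}[\Delta'_t]]=0$ discards the remainder and delivers \eqnok{convergence_g}.

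I expect the only genuinely non-routine point to be the final diameter estimate rather than the asymmetry of $V$. Asymmetry never bites, because at every reabsorption a Bregman term is only ever lower-bounded by the \emph{symmetric} quantity $\tfrac12\|\cdot\|^2$, so the orientation of its arguments is immaterial and the cancellations of the $y$-increment terms proceed exactly as in Theorem~\ref{theorem1}. What does demand care is that the telescoping leaves $V(x,x^1)$ and $V(y,y^1)$ evaluated at the prescribed initial points; bounding these by $D_X$ and $D_Y$ through \eqnok{bound_V_i} tacitly uses that the reference point sits at (or is controlled by) the prox-center $\argmin \omega$, whereas in the Euclidean theorem $\Omega_X$ and $\Omega_Y$ dominate all pairwise distances uniformly and no such restriction is needed. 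Making this constant bookkeeping rigorous --- and checking that \eqnok{stepsize_q3a}--\eqnok{stepsize_q3b} leave precisely the slack required once $\tfrac12\|\cdot\|^2$ replaces $V$ --- is where the attention must go.
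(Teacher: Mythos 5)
Your proposal takes exactly the paper's route: the paper's own proof of Theorem~\ref{theorem1_g} is a two-line remark saying one repeats the proof of Theorem~\ref{theorem1} with Euclidean distances replaced by Bregman distances (via Proposition~\ref{proposition1_g} and Lemma~\ref{lemma1_g}) and modifies the Cauchy--Schwarz step \eqnok{Cauchy2} using $V(x,x^1)\le D_X$ and $V(y,y^1)\le D_Y$, which is precisely the transcription you describe. If anything, your expansion is more careful than the paper's sketch on the one delicate point: you absorb the positive squared-norm remainders into the available \emph{negative} Bregman terms through the strong-convexity lower bound $\tfrac12\|\cdot\|^2\le V(\cdot,\cdot)$ (so the asymmetry of $V$ never matters), and you correctly identify that bounding $V(\cdot,x^1)$ and $V(\cdot,y^1)$ by the $\omega$-diameters is the only step requiring justification beyond the Euclidean argument.
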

\begin{proof}
The proof is almost identical to that of Theorem~\ref{theorem1} except that
we replace the Euclidean distances with Bregman's distances and that in \eqnok{Cauchy2} we use the fact $V(x,x_1) \le D_X$ and $V(y,y_1) \le D_Y,$ i.e.,
$$
\begin{array} {lll}
-\left\langle {Ax^{N}-Ax^{N-1}, y^{N}-y}\right\rangle &\le& \tfrac{\| A\|^2}{2\tau_{N-1}}\| x^N-x^{N-1}\|^2+\tfrac{\tau_{N-1}}{2}\| y^{N}-y\|^2\\
 &\le& \tfrac{\| A\|^2}{\tau_{N-1}}V(x^N,x^{N-1})+\tau_{N-1}V(y^{N},y).
 \end{array}
$$
\end{proof}

\vgap

It should be noted that we can also establish the convergence of the generalized algorithm for smooth bilinear saddle point problems.
However, it is still not clear to us whether Algorithm~\ref{algGRPD} can be generalized to the case when neither $X$ nor $Y$ are bounded.

\section{RPD for linearly constrained problems and its relation to ADMM}
Our goal of this section is to show that Algorithm~\ref{algRPD} applied to the linearly constrained optimization
problems can be viewed exactly as a randomized proximal alternating direction of multiplier method (ADMM).

More specifically, consider the following optimization problem
\beq \label{LCP}
\begin{array}{ll}
\min & f_1(x_1)+f_2(x_2)+...+f_p(x_p) \\
s.t.& A_1x_1+A_2x_2+...+A_px_p=b
\end{array}
\eeq
Chen et. al. show in \cite{ChenHeYeYuan13-1} that a direct extension of ADMM does not necessarily
converge for solving this type of problem whenever $p \ge 3.$ More precisely, for the case $p\ge 3,$ it is required
that the given coefficient matrices $A_i$ satisfy some orthogonality assumptions. In \cite{HongLuo13-1}, Luo and Hong proposed
a variant of ADMM, namely the proximal ADMM, and proved its asymptotical convergence under the strong convexity assumptions
about the objective function. However, to the best of our knowledge, there does not exist a proof for the convergence for the proximal
ADMM method when the strong convexity assumption is removed. On the other hand, the RPD method, which will be shown to be
equivalent to a randomized version of the proximal ADMM method, exhibits an ${\cal O}(1/N)$ rate of convergence
for solving problem \eqnok{LCP} without requiring
any assumptions about the matrices $A_1,A_2,..,A_p$, as well as the strong convexity assumptions about $f_i$, $i = 1, \ldots, p$.

Let us first formally state these two algorithms. Observing that problem \eqnok{LCP} is equivalent to
\beq \label{lagrange2}
\min_{y \in Y} \max_{x \in X} \left\{ \left\langle {y,b}\right\rangle -\left\langle {y,\tsum_{i=1}^pA_ix_i}\right\rangle-\tsum_{i=1}^pf_i(x_i) \right \},
\eeq
where $Y=\bbr^m$, we can specialize Algorithm~\ref{algRPD} applied to problem \eqnok{lagrange2} as shown Algorithm~\ref{algRPD_LCP3}.
On the other hand, noting that the augmented Lagrangian function of \eqnok{LCP} is given by
\beq
L(x,y)=\left\{ \tsum_{i=1}^pf_i(x_i)+ \left\langle{y, \tsum_{i=1}^pA_ix_i-b} \right\rangle + \tfrac{\rho}{2}\| \tsum_{i=1}^pA_ix_i-b\|^2\right\},
\eeq
we can state the proximal ADMM method for solving problem \eqnok{LCP} as shown in Algorithm~\ref{ADMM}.
It is easy to see that \eqnok{eqn_update_y_lcp3} can be rewritten as
$$y^{t+1}=y^t+\tfrac{1}{\tau_t} \left(\tsum_{i=1}^pA_ix_i^{t+1}-b\right),$$
which implies that
\beq \label{y_new}\bar y^{t+1}=y^{t+1}+\tfrac{q_t}{\tau_t}\left( \tsum_{i=1}^pA_ix_i-b\right).\eeq
In view of \eqnok{y_new}, if only
a randomly selected block $x_{i_t}^{t+1}$ is updated in the Step 2 of the proximal ADMM method instead of all blocks of $x^{t+1}$,
 then the randomized version of \eqnok{eqn_update_x1} and \eqnok{eqn_update_xit} are equivalent in case $\rho=\tfrac{q_{t-1}}{\tau_{t-1}}$.
Therefore, we conclude that Algorithm~\ref{algRPD} applied to problem \eqnok{lagrange2} is equivalent to
a randomized version of the proximal ADMM method for solving linearly constrained problems \eqnok{LCP}.

\begin{algorithm} [H]
    \caption{Randomized primal-dual Methods for problem \eqnok{lagrange2}}
    \label{algRPD_LCP3}
    \begin{algorithmic}
\STATE Let $z^1=(x^1,y^1) \in X\times Y$ and stepsizes $\{\gamma_t\}_{t \ge 1}$, $\{q_t\}_{t \ge 1},$ $\{\tau_t\}_{t \ge 1},$ $\{\eta_t\}_{t \ge 1}.$

\FOR {$t=1, \ldots,N$}

\STATE 1. Generate a random variable $i_t$ from $\{1,\ldots,p \}.$

\STATE 2. Update $y^{t+1}$ and $x^{t+1}$ by

\begin{align}
x_i^{t+1} &=
\begin{cases}
\argmin_{x_{i_t} \in X_{i_t}}{f_{i_t}(x_{i_t})+\langle {\bar y_t, A_{i_t}x_{i_t}}\rangle+\tfrac{\eta_t}{2} \| x_{i_t}-x_{i_t}^t\|_2^2}, & i = i_t,\\
x_{i}^{t}, & i \ne i_t.
\end{cases} \label{eqn_update_xit}\\
y^{t+1}&=\argmin_{y \in Y} {\langle {y,b}\rangle-\langle {y,\tsum_{i=1}^pA_ix_i^{t+1}}\rangle}+\tfrac{\tau_t}{2} \| y-y^t\|_2^2. \label{eqn_update_y_lcp3}\\
\bar y^{t+1}&=q_t (y^{t+1}-y^t)+ y^{t+1}. \label{eqn_x_bar_new}
\end{align}

\ENDFOR



    \end{algorithmic}
\end{algorithm}

 \begin{algorithm} [t]
    \caption{Proximal alternating direction of multiplier methods}
    \label{ADMM}
    \begin{algorithmic}
\STATE Let $z=(x^1,y^1) \in X\times Y$ and stepsizes  $\{\eta_t\}_{t \ge 1}.$

\FOR {$t=1, \ldots,N$}

\STATE Update $y^{t+1}$ and $x^{t+1}$ by
\begin{align}
x_i^{t+1}&=\arg\min\limits_{x_i \in X_i}{f_i(x_i)+\langle {y_t,A_ix_i}\rangle+\rho\langle {\tsum_{j<i}A_ix_i^{t+1}+\tsum_{j\ge i}A_ix_i^{t}-b,A_ix_i}
\rangle+\tfrac{\eta_t}{2} \| x_i-x_i^t\|_2^2}, i=1,\ldots,p. \label{eqn_update_x1}\\
y^{t+1}&=y^t+\rho\left(\tsum_{i=1}^pA_ix_i^{t+1}-b \right). \label{eqn_update_yADMM}
\end{align}
\ENDFOR
    \end{algorithmic}
\end{algorithm}

In order to understand its practical performance for solving the worst-case instances in
\cite{ChenHeYeYuan13-1}, we implement Algorithm~\ref{algRPD} for solving the linearly constrained
problem \eqnok{LCP} with $b=0$ and $f_i(x_i)=0, \;i=1,2,\ldots,p.$ Moreover, we assume that $A_i, i=1,\ldots,p$
are set to $A_1=(1;1;\ldots;1), A_2=(1;\ldots;1;2),...,A_p=(1;2;2;\ldots;2).$ Under the above settings, problem \eqnok{LCP} is equivalent to
a homogenous linear system with $p$ variables
\beq\label{LE}\tsum_{i=1}^p A_ix_i=0,\eeq
where $A_i, i=1,2,\ldots,p$ are nonsingular. Problem \eqnok{LE} has a unique solution $x^*=(0;0;\ldots;0) \in \bbr^n.$
The problem constructed above slightly generalizes the counter example in \cite{ChenHeYeYuan13-1}.
As shown in Table~\ref{table1},
while the original ADMM does not necessarily converge in solving the above problem even with $ p =3$,
Algorithm~\ref{algRPD} converges to the optimal solution $x^*$ for all different values of $p$ that we have tested.
 \begin{table}
\centering
\caption{Results of Algorithm~\ref{algRPD} for solving problem \eqnok{LE}}
\label{table1}
\begin{tabular}{  c  c  c c c }
\hline
p & $\| x^{100}-x^*\|$ &  $\| x^{1,000}-x^*\|$ & $\| x^{10,000}-x^*\|$  &  $x^{100,000}-x^*$ \\ 
\hline
10 & $2.0608$ & $1.1416$ & $0.2674$& $0.0396$  \\

20 &$4.2308$ & $1.1438$ & $1.6588$& $0.4711$  \\

 50 &$7.0277$ & $6.6469$ & $2.2886$& $2.1143$ \\
 \hline
\end{tabular}
\end{table}

\section{Concluding remarks}
In this paper, we present a new randomized algorithm, namely the randomized primal-dual method, for
solving a class of bilinear saddle point problems. Each iteration of the RPD method requires to solve only one subproblem rather than all subproblems as in
the original primal-dual algorithms. The RPD method does not require strong convexity assumptions about the objective function
and/or boundedness assumptions about the feasible sets. Moreover, based on a new primal-dual termination criterion,
we show that this algorithm exhibits an ${\cal O}(1/N)$ rate of convergence for both bounded and unbounded saddle point problems and
and ${\cal O}(1/N^2)$ rate of convergence for smooth saddle point problems. Extension for the non-Euclidean
setting and the relation to the ADMM method have also been discussed in this paper.

It is worth noting that there exist a few possible extensions of this work. Firstly, for the case when $h(x)$ is not necessarily simple, but a general smooth convex,
one can modify \eqnok{eqn_update_x} in Step 2 of Algorithm~\ref{algRPD} by replacing $h(x)$ with its linear approximation as suggested in \cite{CheLanOu13-1}. Secondly, this paper focuses on the case when the dual space has multiple blocks. However, it is possible to apply block decomposition for both the primal and dual spaces whenever the feasible sets $X$ and $Y$ are decomposable. Finally, it will be interesting to see if the rate of convergence
for the RPD methods can be further improved by using non-uniform distribution for the random variables $i_t$.

\bibliographystyle{plain}
\bibliography{glan-bib1}
\end{document}